\newtheorem{theorem}{Theorem}[section]
\newtheorem{lem}[theorem]{Lemma}
\newtheorem{kor}[theorem]{Corollary}
\newtheorem{prop}[theorem]{Proposition}
\theoremstyle{remark}
\newtheorem{rem}[theorem]{Remark}
\newtheorem{remark}[theorem]{Remark}
\theoremstyle{definition}
\newtheorem{dfn}[theorem]{Definition}
\numberwithin{equation}{section}
\newtheorem{claim}{Claim}
\def\dd{\displaystyle}
\def\rr{{\mathbb{R}}}
\def\rrd{{\mathbb{R}^d}}
\def\calp{{\mathcal{P}}}
\def\9{{\infty}}
\def\1{^{-1}}
\def\lbb{{\lambda}}
\def\vf{{\varphi}}
\def\pp{{\partial}}
\def\vp{{\varepsilon}}
\def\ff{\forall }
\def\barr{\begin{array}}
	\def\earr{\end{array}}
\def\({\left(}
\def\){\right)}
\def\FP{Fokker--Planck}
\newcommand{\esssup}{\mathrm{ess~sup}}
\DeclareMathOperator*{\supp}{supp}
\DeclareMathOperator*{\divv}{div}
\DeclareMathOperator*{\loc}{loc}
\newcommand*{\Ascr}{\mathcal A}
\newcommand*{\Bscr}{\mathcal B}
\newcommand*{\Fscr}{\mathcal F}
\newcommand*{\Mscr}{\mathcal M}
\newcommand*{\Pscr}{\mathcal P}
\newcommand*{\N}{\mathbb{N}}
\newcommand*{\R}{\mathbb{R}}
\newcommand*{\vrho}{\varrho}
\definecolor{orange}{rgb}{1.0, 0.55, 0.0} 
\newcommand{\footremember}[2]{%
	\footnote{#2}
	\newcounter{#1}
	\setcounter{#1}{\value{footnote}}%
}
\title{\bf $p$-Brownian motion and the $p$-Laplacian}
\author{Viorel Barbu\footnote{Al.I. Cuza University and Octav  Mayer Institute of Mathematics of Romanian Academy, Ia\c si, Romania. E-mail: vb41@uaic.ro} \and
	Marco Rehmeier\footremember{alley}{Faculty of Mathematics, Bielefeld University, 33615 Bielefeld, Germany. E-mail: mrehmeier@math.uni-bielefeld.de}
	\and Michael Röckner\footnote{Faculty of Mathematics, Bielefeld University, 33615 Bielefeld, Germany. E-mail: roeckner@math.uni-bielefeld.de} \footnote{Academy of Mathematics and System Sciences, CAS, Beijing}}
\date{\today}
\begin{document}
	\maketitle
	
	\begin{abstract}
		In this paper we construct a stochastic process, more precisely, a (nonlinear)  Markov process, which is related to the parabolic $p$-Laplace equation in the same way as Brownian motion is to the classical heat equation given by the (2-) Laplacian.  
	\end{abstract}
	\noindent	\textbf{Keywords:} $p$-Laplacian; Brownian motion; Barenblatt solution; nonlinear Fokker—Planck equation; Markov process; McKean—Vlasov stochastic differential equation.\\	
	\textbf{2020 MSC:} Primary: 60J65; Secondary: 35K55, 35Q84, 60J25.
	
	
	\section{Introduction}
	
	As it has been known for more than 100 years  (where, e.g, the articles by A. Einstein \cite{17a}, M.~von Smoluchowski \cite{45a} and N.~Wiener~\cite{48a} are among the pioneering papers), there is a close relationship between Brownian motion and the Laplace operator, more precisely the classical heat equation, i.e.,
	\begin{equation}\label{e1.1}
	\frac\pp{\pp t}\,u(t,x)=\Delta u(t,x),\ (t,x)\in(0,\9)\times\rrd,\end{equation}
	where $\Delta={\rm div}\,\nabla$. 
	But it has been an open question whether the same is true for the $p$-Laplace operator, i.e. whether there exists a "$p$-Brownian motion" related to the parabolic $p$-Laplace equation, i.e., for $p>2$,
	\begin{equation}\label{e1.2}
	\frac\pp{\pp t}\,u(t,x)={\rm div}(|\nabla u(t,x)|^{p-2}\nabla u(t,x)),\ (t,x)\in(0,\9)\times\rrd,\end{equation}
	in an analogous way as in the case $p=2$, i.e. \eqref{e1.1}. The $p$-Laplacian 
	$$\Delta_pu={\rm div}(|\nabla u|^{p-2}\nabla u)$$
	has been for many years and still is an extensively studied operator in a large number of contexts, be it from the point of view of partial differential equations (PDEs) (see, e.g., \cite{13a}, \cite{14a}, 
	\cite{24a}, \cite{29a}, \cite{31a},
	\cite{32a},  \cite{44a} and the references therein), within nonlinear functional analysis, e.g. as the duality map between $W^{1,p}$ and its dual $W^{-1,p'}$ (see, e.g., \cite{10'}, \cite{31'}),   in nonlinear potential theory (see, e.g., \cite{1a},  \cite{23a}, \cite{33a} and the references therein) or its applications in physics (see, e.g., \cite{27'}, \cite{34'}). 
	
	It should be mentioned that linear potential theory has played a crucial role in developing and exploiting the relation between the (2-) Laplacian and Brownian motion and, more generally, between large classes of linear partial (and pseudo) differential operators and their associated Markov processes   for more than 60 years (see, e.g., \cite{7a}, \cite{8a},  \cite{15a}, \cite{16a}, \cite{18a},  \cite{21a}, \cite{22a}, \cite{25a}, \cite{30a}, \cite{38a}, \cite{39a}, \cite{40a} and \cite{41a}). Hence, one could expect that non-linear potential theory, which also has a long and exciting history, would pave the way for discovering the probabilistic counterpart of the $p$-Laplacian. One approach in this direction was developed in the fundamental papers  \cite{34''}, \cite{35'}, where a deep relation of a stochastic game, the "tug-of-war" game with noise, was discovered and exploited to find a beautiful probabilistic description of the $p$-harmonic function solving the Dirichlet problem for the $p$-Laplacian on a bounded domain in $\rrd$. In this paper we propose a different approach, namely to construct the desired probabilistic counterpart to the $p$-Laplacian as a Markov process which is related to the $p$-Laplace equation in the same way as Brownian motion is to the classical heat equation \eqref{e1.1}, and which then may be called a "$p$-Brownian motion". 
	
	The starting point in our approach is to recall that the relation between the classical heat equation and Brownian motion is a special case of the relation between a Fokker--Planck equation (FPE) (see, e.g., \cite{9a}) and its associated Markov process. In fact, this relation is a one-to-one correspondence, which turns out to extend to large classes of FPEs and their  canonically associated Markov processes, and even to nonlinear FPEs and their  canonically associated nonlinear Markov processes (see \cite{35a}, \cite{5a}, \cite{37a}). We will explain this below in more detail and only mention here that such nonlinear FPEs can be shown to have an associated family of stochastic processes given  as the solutions to a corresponding McKean--Vlasov stochastic differential equation (SDE), which turns out to constitute a nonlinear Markov process (though the latter is extremely hard to prove in concrete cases).   
	
	Key Step 1 in our construction is to identify the parabolic $p$-Laplace equation as a nonlinear FPE, and Key Step 2 to solve the corresponding  McKean--Vlasov SDE. The third and hardest Key Step is then to prove that the solutions of the latter constitute a nonlinear Markov process which is thus associated to the parabolic $p$-Laplace equation in the same way as Brownian motion is to the classical heat equation. 
	
Let us now explain in more detail the relation of nonlinear FPEs and nonlinear Markov processes, where the latter have already been defined in McKean's classical paper \cite{35a} (see also  \cite{37a}). Let $\calp(\rrd)$ denote the space of Borel probability measures on $\rrd$, and for $1\le i,j\le d$ consider measurable maps
	$$b_i,a_{ij}:[0,\9)\times\rrd\times\calp(\rrd)\to\rr$$such that the matrix $(a_{ij})_{i,j}$ is pointwise symmetric and nonnegative definite. Then, a nonlinear FPE is an equation of type 	
	\begin{equation}\label{e1.3}
	\frac\pp{\pp t}\,\mu_t=\frac\pp{\pp x_i}\,\frac\pp{\pp x_j}\,(a_{ij}(t,x,\mu_t)\mu_t)-\frac\pp{\pp x_i}\,(b_i(t,x,\mu_t)\mu_t),\ (t,x)\in(0,\9)\times\rrd,
	\end{equation}
	where the solution $[0,\9)\ni t\mapsto\mu_t$ is a curve in $\calp(\rrd)$ with some specified initial condition $\mu_0$. 
	Here, we use Einstein's  summation convention,  \eqref{e1.3} is meant as a weak equation in the sense of Schwartz distributions (see Definition \ref{dD.2} in Appendix D) and, henceforth, we call solutions to \eqref{e1.3} {\it distributional solutions}. The (in space) dual operator to the operator on the right hand side  of \eqref{e1.3} is called the corresponding Kolmogorov operator $L$, i.e. its action on test functions $\vf\in  C^\9_0((0,\9)\times\rrd)$ is given as
	\begin{equation}\label{e1.4}
	L\vf(t,x)=a_{ij}(t,x,\mu_t)\frac\pp{\pp x_i}\ \frac\pp{\pp x_j}\,\vf(t,x)+b_i(t,x,\mu_t)\frac\pp{\pp x_i}\,\vf(t,x),\ (t,x)\in(0,\9)\times\rrd.\end{equation}
	In turn, this operator determines the corresponding McKean--Vlasov SDE
	\begin{eqnarray}
	dX(t)&=&b(t,X(t),\mu_t)dt+\sqrt{2}\,\sigma(t,X(t),\mu_t)dW(t),\ t>0, 
	\label{e1.5}\\
	\mathcal{L}_{X(t)}&=&\mu_t,\ t\ge0,  \label{e1.6}
	\end{eqnarray}
	where $\sigma=(\sigma_{ij})_{ij}$ with $\sigma\sigma^\top=(a_{ij})_{ij}$, $b=(b_1,...,b_d)$, $W(t)$, $t\ge0$, is a (standard) $d$-dimensional Brownian motion on some probability space $(\Omega,\mathcal{F},\mathbb{P})$, and the maps $X(t):\Omega\to\rrd$, $t\ge0$, form the continuous in $t$ solution process to \eqref{e1.5} such that its one-dimensional time marginals
	\begin{equation}\label{e1.7} 
	\mathcal{L}_{X(t)}:=X(t)_*\mathbb{P},\ t\ge0,
	\end{equation}
	(i.e. the push forward or image measures of $\mathbb{P}$ under $X(t)$) satisfy \eqref{e1.6}. We refer to \cite{12a}, \cite{34a},  \cite{42a}, \cite{46a}  and the references therein for details on McKean--Vlasov SDEs. The correspondence between the nonlinear FPE \eqref{e1.3} and the McKean--Vlasov SDE \eqref{e1.5}, \eqref{e1.6} 
	is now given as follows. Suppose one has a solution to \eqref{e1.5}, \eqref{e1.6}, then by a simple application of It\^o's formula $\mu_t:=\mathcal{L}_{X(t)}$, $t\ge0$, solves \eqref{e1.3}. Conversely,  suppose one has a distributional solution to \eqref{e1.3}, then the nonlinear version in \cite{3a},   \cite{4a} of the superposition principle \cite[Theorem 2.5]{43a} (which in turn is a generalization of \cite{20a}, see also \cite{27a} and \cite{11a}) together with  \cite[Proposition 2.2.3]{22'} (which is a slight generalization of \cite[Proposition 4.11]{24'}) 
	 gives rise to a solution to \eqref{e1.5}, \eqref{e1.6}. We would like to emphasize that in \cite{4a} (see also \cite{5a})  
	apart from some integrability conditions the coefficients are only required to be measurable  (in contrast to the cases studied in \cite{12a}). 
	
	Obviously, the special case of classical Brownian motion and the classical heat equation is the case where $a_{ij}(t,x,\mu)=\delta_{ij}$ (= Kronecker delta),  $b_i(t,x,\mu)=0$, i.e. \eqref{e1.3} turns into
	\begin{equation}\label{e1.8}
	\frac\pp{\pp t}\,\mu_t=\Delta\mu_t,\ (t,x)\in(0,\9)\times\rrd,
	\end{equation}
	and \eqref{e1.5}, \eqref{e1.6} into  
	\begin{eqnarray}
	dX(t)&=&dW(t),\ t>0,\label{e1.9}\\
	\mathcal{L}_{X(t)}&=&\mathcal{L}_{W(t)}=\mu_t,\ t\ge0,\label{e1.10}
	\end{eqnarray}
	and, of course, $\mu_t$ is absolutely continuous with respect to  Lebesgue measure $dx$ on $\rrd$ with density $u(t,x)$, so \eqref{e1.8} is really \eqref{e1.1}. 
	
	To see that also \eqref{e1.2} is of type \eqref{e1.3}, we recall that the coefficients in \eqref{e1.3} only need to be measurable in the variable $\mu$, so in case the solutions $\mu_t$, $t\ge0$, have densities $u(t,\cdot)$, $t\ge0$, we might, e.g.,  have dependencies as follows
	\begin{equation} \label{e1.11}
	\barr{rcl}
	a_{ij}(t,x,\mu_t)&=&\tilde a_{ij}(t,x,\Gamma_1(u)(t,x)),\\
	b_i(t,x,\mu_t)&=&\tilde b_i(t,x,\Gamma_2(u)(t,x)),\earr 
	\end{equation}
	where $\tilde b_i,\tilde a_{ij}:[0,\9)\times\rrd\times\rr^k\to\rr$ and each $\Gamma_i$ is a functional on the space of distributional solutions whose values are again measurable functions of $t$ and $x$. Then, rewriting \eqref{e1.2} as 
	\begin{equation}\label{e1.13} 
	\frac\pp{\pp t}\,u(t,x)=\Delta(|\nabla u(t,x)|^{p-2}u(t,x))-{\rm div}(\nabla(|\nabla u(t,x)|^{p-2})u(t,x)),\ (t,x)\in(0,\9)\times\rrd,\end{equation}
	we see that \eqref{e1.2}, respectively \eqref{e1.13},  is of type \eqref{e1.3} with $a_{ij},b_i$ as in \eqref{e1.11}   with
	\begin{equation}\label{e1.13'}
	\barr{rcl}
		\tilde a_{ij}(t,x,\Gamma_1(u)(t,x))&=&\delta_{ij}|\nabla u(t,x)|^{p-2}\smallskip\\
		\tilde b(t,x,\Gamma_2(u)(t,x))&=&\nabla(|\nabla u(t,x)|^{p-2}).\earr
	\end{equation}
	We point out that while the already challenging so-called {\it Nemytskii-case}, where $\Gamma_i(u)(t,x)=u(t,x)$, has received more and more attention in the last years (see, for instance, \cite{2a}, \cite{3a}, \cite{4a}, \cite{5a} and the references therein), the coefficients in \eqref{e1.13'} even depend on $u$ via its first- and second-order derivatives. To the best of our knowledge, the relation of such nonlinear FPEs to McKean--Vlasov SDEs has not been studied before. 	
	The Kolmogorov operator associated with the coefficients \eqref{e1.13'} is then given by
	\begin{equation}\label{e1.14} 
	L\vf(t,x)=|\nabla u(t,x)|^{p-2}\Delta\vf(t,x)+\nabla(|\nabla u(t,x)|^{p-2})\cdot\nabla\vf(t,x),\ (t,x)\in(0,\9)\times\rrd,\end{equation}
	and the corresponding McKean--Vlasov SDE by
	\begin{eqnarray}
	dX(t)&=&\nabla(|\nabla u(t,X(t))|^{p-2})dt+\sqrt{2}\,|\nabla u(t,X(t))|^{\frac{p-2}2}dW(t),\ t>0,\label{e1.15} \\
	\mathcal{L}_{X(t)}&=&u(t,x)dx,\ t>0.\label{e1.16}
	\end{eqnarray} 
	This completes Key Steps 1 and 2. 
	
	For Key Step 3, i.e. to obtain the nonlinear Markov process from \eqref{e1.15}, \eqref{e1.16}, we need to take into account initial conditions which we choose to be Dirac measures $\delta_y$, $y\in\rrd$. So, we impose in \eqref{e1.13} that $u(0,x)dx=\delta_y$ and in \eqref{e1.16}  that $\mathcal{L}_{X(0)}=\delta_y$. In both cases $p=2$ and $p>2$, for such initial conditions the fundamental solution to \eqref{e1.1} and \eqref{e1.13} (that is, to \eqref{e1.2})  are explicitly known, namely in case $p=2$, for $y\in\rrd$ it is given by the classical Gaussian heat kernel
	\begin{equation}\label{e1.17}
	u^y(t,x):=\frac1{(4\pi t)^{\frac d2}}\exp\(-\frac1{4t}|x-y|^2\),\ (t,x)\in(0,\9)\times\rrd,\end{equation}
	and in case $p>2$ by the famous Barenblatt solution (see \cite{24a})
	\begin{equation}\label{e1.18}
	w^y(t,x)=t^{-k}\(C_1-qt^{-\frac{kp}{d(p-1)}}
	|x-y|^{\frac p{p-1}}\)^{\frac{p-1}{p-2}}_+,\ (t,x)\in(0,\9)\times\rrd,\end{equation}
	where  $k:=\(p-2+\frac pd\)\1,$ $q:=\frac{p-2}p\(\frac kd\)^{\frac1{p-1}}$,   $C_1\in(0,\9)$ is the unique constant such that $|w^y(t)|_{L^1(\rrd)}$ $=1$ for all $t>0$, and $f_+:=\max(f,0)$. Then, we consider   the {\it path laws} of the corresponding solutions $X^y(t)$, $t\ge0$, of \eqref{e1.9}, \eqref{e1.10} and \eqref{e1.15}, \eqref{e1.16}, respectively, namely,
\begin{equation}\label{e1.19}
	P_y:=(X^y)_*\mathbb{P},\ y\in\rrd,\end{equation}
	i.e. the push forward or image measure of $\mathbb{P}$ under the map $X^y:\Omega\to C([0,\9),\rrd)$ (= all continuous paths in $\rrd$).  Then, if $p=2$, $P_y$, $y\in\rrd$, form a Markov process in the sense of e.g. \cite{8a}, \cite{16a}, \cite{39a}, 
	also called {\it Brownian motion}, which is uniquely determined by $u^y(t,x),$ $y\in\rrd$, $t>0$. And  in this paper we prove that $P_y$, $y\in\rrd$, form a nonlinear Markov process in the sense of McKean \cite{35a}, which is uniquely determined by $w^y(t,x),$ $y\in\rrd$, $t>0$ and \eqref{e1.15}--\eqref{e1.16}. In contrast to the results of Sections \ref{s2}--\ref{s4}, which hold for all $d\ge1$, $p>2$, for this part, i.e. Key Step 3, we need $d\ge2$, $p>2$. By analogy, we call this nonlinear Markov process  {\it$p$-Brownian motion}. 
	
	The fact that the probability measures $P_y$, $y\in\rrd$, in \eqref{e1.19} form a nonlinear Markov process is of fundamental importance, since it pins down their interrelation and is the analogue on path space to the flow property of the fundamental solutions to the parabolic $p$-Laplace equation on state space (see \eqref{e5.1'} below). The proof of this fact is quite involved and one of its main ingredients is Theorem \ref{t5.10} below.  It uses very heavily the explicit form of the Barenblatt solution $w^y$ in \eqref{e1.18} and of the coefficients $|\nabla w^y(t,x)|^{p-2}$ and $\nabla(|\nabla w^y(t,x)|^{p-2})$ in \eqref{e1.13'} with $u=w^y$. This completes Key Step 3. 
	
	All these three Key Steps are implemented in detail in the main text of the paper, including stating or recalling all required definitions and notions.
	
	We would also like to make a remark concerning the above quoted references. The literature about the $p$-Laplacian, Markov processes, nonlinear potential theory, Fokker--Planck equations and McKean--Vlasov equations is so huge that it is impossible to give credit to it in an appropriate way in this paper. Therefore, above we tried to make a reasonably sized selection which is very probably not at all optimal. But this was necessary, in regard to the length of this paper. Nevertheless, we would like to comment on one more reference, namely 
	 \cite{26a}, 
	 to avoid confusion. We point out that the notion of nonlinear Markov processes therein is different from McKean's, and hence from ours in this paper, since it is defined there as 
	 a family of linear Markov processes. 
	 We refer to \cite{26a} for details.
	
	We firmly hope that the relation between the $p$-Brownian motion and the $p$-Laplacian, or more generally nonlinear  Markov processes and nonlinear FPEs,  will turn out to be as fruitful as in the linear theory, which profited a lot from transferring an analytic  problem for the PDE into a probabilistic problem for the stochastic process and vice versa. 
	
	Furthermore, we would like to mention that for the heat equation or more general linear FPEs,   considering Dirac initial conditions is in a sense generic, since by linearity in the initial condition a solution with another initial probability measure is a simple linear superposition of the solutions with Dirac initial conditions. This is completely different for the $p$-Laplacian equation, which by its nonlinearity produces a richer structure of solutions, if one considers general initial probability measures. Nevertheless, our general approach  above can be also applied to more general initial con\-di\-tions, and the corresponding McKean--Vlasov SDEs can be solved as above. This will be the subject of a forthcoming paper.
	
	Finally, it is obvious that Key Steps 1 and 2 above apply to much more general nonlinear parabolic PDEs than the parabolic $p$-Laplace equation \eqref{e1.2}. The hard part is Key Step 3, more precisely the extremality of the solution to the nonlinear PDE in the class of distributional solutions to the linearized PDE, analogous to Theorem \ref{t5.10} (see also Remark \ref{r6.6} (ii)) below. Also this will be the subject of our future work.
	
	Let us now summarize  the structure of the paper. While we have already presented  parts of Key Step 1, namely the identification of the parabolic $p$-Laplace equation as the nonlinear FPE \eqref{e1.13}   above, this step  is completed in Section \ref{s2}, where we define our notion of distributional solutions to \eqref{e1.13}   (see Definition \ref{d2.3}) and compare it with the usual notion of (weak) solution to the parabolic $p$-Laplace equation (see Definition \ref{d2.1}). Key Step 2 is implemented in Sections \ref{s3} and \ref{s4}. In Section \ref{s3}, we solve the corresponding McKean--Vlasov equation \eqref{e1.15}, \eqref{e1.16}   (see Definition \ref{d3.1} and Theorem \ref{t3.3}), and Section \ref{s4} is devoted to the study of the solutions to the nonlinear FPE \eqref{e1.13}   and to the construction of solutions to \eqref{e1.15}, \eqref{e1.16}   imposing Dirac initial conditions (see Proposition \ref{p4.1} and Theorem \ref{t4.2}). 	
	 Key Step 3 is presented in Sections \ref{s5} and \ref{s6a}. 
	  In Section \ref{s5} we recall McKean's definition  of a nonlinear Markov process (see Definition \ref{d5.1}) and then prove that the path laws $P_y$, $y\in\rrd$, (see \eqref{e1.19}) of the solutions to \eqref{e1.15}, \eqref{e1.16}   form such a process (see Theorem \ref{t5.2}) and we give the definition of the $p$-Brownian motion (see Definition \ref{d5.5}). The crucial ingredient already mentioned above, namely, the restricted distributional  uniqueness result for the linearized parabolic $p$-Laplace equation (see Theorem \ref{t5.10}) is subsequently proved in Section \ref{s6a}. The Appendix contains some specified details of proofs in the main text (see Appendices A--C) and some basic facts on FPEs (see Appendix D).

	\subsection{Notation}
	\textbf{Standard notation.} On $\R^d$ we write $|\cdot|$, $ x\cdot y$ and $dx$ (or $dt$ in one dimension) for the Euclidean norm, its inner product and Lebesgue measure. $\delta_{ij}$ denotes the Kronecker delta, and $\supp f$ the support of a function $f$ on a topological space.
	\\
	\textbf{Measures.}
	$\Mscr^+_b$ is the space of finite nonnegative Borel measures on $\R^d$. For a topological space $X$, we write $\Pscr(X)$ for the probability measures on $\Bscr(X)$, the Borel $\sigma$-algebra of $X$. For $X= \R^d$, we write $\Pscr(\R^d) = \Pscr$. A curve $t\mapsto \mu_t \in \Mscr^+_b$ on an interval $I \subseteq (-\infty,\infty)$ is \emph{vaguely}, respectively \emph{weakly continuous}, if $t\mapsto \int_{\R^d} \psi \, d \mu_t$ is continuous for all continuous compactly supported, respectively continuous bounded $\psi: \R^d \to \R$.
	The symbol $\mathbb{E}$ denotes expectation with respect to a given probability measure on a prescribed measurable space.
	\\
	\textbf{Function spaces.}
	For $U \subseteq \R^d$, a measure $\mu$ on $\Bscr(U)$ and a normed space $(X,|\cdot|_X)$, the spaces $L^p(U;X;\mu)$, $p\in [1,\infty]$, denote the usual spaces of $p$-integrable measurable functions $f: U \to X$ with norm $|f|^p_{L^p(U;X;\mu)} = \int_{U} |f|^p_{X}d\mu$ for $p< \infty$ and $|f|_{L^\infty(U;X;\mu)} = \inf\{C >0: |f|_X \leq C \,\mu-a.s.\}$. When $X =\R^1$ or $\mu = dx$, we omit $X$ or $dx$ from the notation. The corresponding local spaces are $L^p_{\loc}(U;X;\mu)$.
	
	For $m \in \N$ and $p \in [1,\infty]$, $W_{(\loc)}^{m,p}(\R^d;\R^k)$ are the usual Sobolev spaces of functions $g: \R^d \to \R^k$ with (locally) $p$-integrable weak partial derivatives up to order $m$. If $k=1$, we write $W^{m,p}_{(\loc)}(\R^d)$. For $p=2$, $H^m(\R^d,\R^k) := W^{m,2}(\R^d,\R^k)$ are Hilbert spaces with their usual norm $|\cdot|_{H^m}$ and dual space $H^{-m}(\R^d,\R^k)$. For an open interval $I \subseteq (-\infty,+\infty)$, $W^{2,1}_2(I\times \R^d)$ denotes the space of functions $g: I\times \R^d \to \R$ such that $g$ and its weak partial derivatives $\frac{d}{dt}g, \frac{\partial}{\partial x_i}g$ and $\frac{\partial}{\partial x_i} \frac{\partial}{\partial x_j}g$, $1\leq i,j \leq d$, belong to $L^2(I\times \R^d)$.
	
	For $U \subseteq \R^d$, a Banach space $X$ and $m \in \N \cup \{0\}$, $C^m(U,X)$ ($C^m_b(U,X)$, $C^m_0(U,X)$) are the spaces of continuous (bounded, compactly supported, respectively) functions $g: U \to X$ with continuous partial derivatives up to order $m$, written $C(U,X)$ ($C_b(U,X)$, $C_0(U,X)$) if $m=0$. If $X = \R^1$, we write $C^m(U), C^m_b(U)$ and $C^m_0(U)$, and we set $C^\infty(U) = \bigcap_{m >0} C^m(U)$ (likewise for $C^\infty_b(U)$ and $C^\infty_0(U)$). On $C([0,\infty),\R^d)$, $\pi_t$ denotes the projection $\pi_t(f):= f(t)$. In time-dependent situations, we denote by $\pi^s_t$ , $t\geq s$, the same map on $C([s,\infty),\R^d)$ and write $\pi^0_t = \pi_t$. For an interval $I \subseteq (-\infty,+\infty)$, $C^{2,1}_0(I\times \R^d)$ is the space of continuous functions $g: I \times \R^d \to \R$ with compact support in $I \times \R^d$ such that the pointwise derivatives $\frac{d}{dt}g$, $\frac{\partial}{\partial x_i}g$ and $\frac{\partial}{\partial x_i}\frac{\partial}{\partial x_j} g$, $1\leq i,j \leq d$, are continuous on $I\times\rrd$.

	\section{The $p$-Laplace equation and its formulation as a Fokker--Planck equation}\label{s2}
	
Regarding our Key Step 1 mentioned in the introduction, we begin by recalling the usual definition of solution to \eqref{e1.2} and to its \FP\ formulation \eqref{e1.13}. Let $p>2$.
	\begin{dfn}\label{d2.1}
		$u\in L^1_{\textup{loc}}((0,\infty); W^{1,1}_{\loc}(\R^d))$ is a \emph{solution} to \eqref{e1.2}, if 
		$\nabla u \in  L^{p-1}_{\textup{loc}}((0,\infty);L^{p-1}_{\textup{loc}}(\R^d;\R^d))$
		and for all $\vf \in C^1_0((0,\infty)\times \R^d)$
		\begin{equation}\label{e2.1}
		\int_0^\infty \int_{\R^d} -u\, \frac d{dt}\,\vf + |\nabla u|^{p-2}\nabla u \cdot \nabla \vf \,dx dt = 0.
		\end{equation}
		$u$ \emph{has initial condition} $\nu \in \Mscr_b^+$, if for every $\psi \in C_0(\R^d)$ there is a set $J_\psi \subseteq (0,\infty)$ of full $dt$-measure such that 
		\begin{equation}\label{e2.2}
		\int_{\R^d}\psi\, d\nu = \lim_{t \to 0, t \in J_\varphi}\int_{\R^d} u(t)\psi\, dx.
		\end{equation}
		A solution $u$ with initial condition $\nu$ is called \emph{probability solution}, if $\nu$ and $dt$-a.e. $u(t,x)dx$ belong to~$\calp$. 
\end{dfn}
		
		Condition \eqref{e2.2} for the initial condition is similar to \cite[(6.1.3)]{9a}. Note that in both parts of the following remark one needs the local integrability of $t\mapsto\nabla u(t)\in L^{p-1}_{(\rm loc)}(\rrd;\rrd)$ on $[0,\9)$, whereas in Definition \ref{d2.1} it is only required on $(0,\9)$.

	\begin{rem}\label{r2.2}\
		\begin{enumerate}
			\item [(i)] If $u$ is a solution with initial condition $\nu$ such that $t \mapsto u(t,x)dx$ is vaguely continuous on $(0,\infty)$, then $J_\psi = (0,\infty)$ for all $\psi\in C_0(\R^d)$ and $t \mapsto u(t,x)dx$ extends vaguely continuous in $t=0$ with value $\nu$. In this case, if also $\nabla u \in L^{p-1}_{\textup{loc}}([0,\infty);L^{p-1}_{\textup{loc}}(\R^d;\R^d))$, \eqref{e2.1}+\eqref{e2.2} are equivalent to
			\begin{equation}\label{e2.3}
			\int_{\R^d}\psi \,u(t)\,dx = \int_{\R^d} \psi \,d\nu  - \int_0^t \int_{\R^d} |\nabla u|^{p-2}\nabla u \cdot \nabla \psi \,dx dt,\quad\forall t \geq 0,
			\end{equation}
			for every $\psi \in C^1_0(\R^d)$. This can be proven as in the proof of \cite[Prop.6.1.2]{9a}.
			\item[(ii)] Every nonnegative solution $u$ with initial condition $\nu \in \Mscr^+_b$, $\esssup_{t>0}|u(t)|_{L^1(\rrd)} <\infty$ and $\nabla u \in L^{p-1}_{\textup{loc}}([0,\infty);L^{p-1}_{\textup{loc}}(\R^d;\R^d))$ has a unique vaguely continuous $dt$-version $\tilde{u}$ on $[0,\infty)$. This follows from \cite[Lemma 2.3]{36a}.  
			 Moreover, if $\nabla u \in L^{p-1}_{\textup{loc}}([0,\infty),L^{p-1}(\R^d))$, then $|u(t)|_{L^1(\rrd)} = \nu(\R^d)$ for all $t \geq 0$. The latter follows by considering \eqref{e2.3} for an increasing sequence $(\psi_n)_{n\in \N}\subseteq C^1_0(\R^d)$ such that $0\leq \psi_n \leq 1$, $\psi_n(x) = 1$ for $|x| < n$, $\sup_n|\nabla \psi_n(x)|\leq C$ for all $x\in \R^d$ for $C$ not depending on $n$ or $x$ and by letting $n \to \infty$. In this case, $t\mapsto\tilde{u}(t,x)dx$ is weakly continuous.
		\end{enumerate}
	\end{rem}
	
Now, we turn to the nonlinear \FP\ formulation \eqref{e1.13} of \eqref{e1.2} and its notion of distributional solution.

	\begin{dfn}\label{d2.3}
		$u\in L^1_{\textup{loc}}(0,\infty; W^{1,1}_{\loc}(\R^d))$ is a  \emph{distributional solution} to \eqref{e1.13}, if
		\begin{equation}\label{e2.4}
		\barr{l}
		|\nabla u|^{p-2} \in L^1_{\textup{loc}}((0,\infty);W^{1,1}_{\textup{loc}}(\R^d)),\quad	|\nabla u|^{p-2} u \in L^1_{\textup{loc}}((0,\infty)\times \R^d),\medskip\\ \nabla( |\nabla u|^{p-2}) u  \in L^1_{\loc}((0,\infty)\times \R^d;\R^d),\earr
		\end{equation}
		and for all $\vf \in C^2_0((0,\infty)\times \R^d)$
		\begin{equation}\label{e2.5}
		\int_0^\infty \int_{\R^d} u\(\frac d{dt}\, \vf +  |\nabla u|^{p-2} \Delta \vf+ \nabla( |\nabla u|^{p-2}) \cdot \nabla \vf\)\,dx dt = 0.
		\end{equation}
		$u$ \emph{has initial condition} $\nu \in \Mscr_b^+$, if for every $\psi \in C_0(\R^d)$ there is a set $J_\psi$ as in Definition \ref{d2.1} such that \eqref{e2.2} holds. A distributional solution $u$ with initial condition $\nu$ is a \emph{probability solution}, if $\nu$ and $dt$-a.e. $u(t,x)dx$ belong to $\mathcal{P}$.
	\end{dfn}

	\begin{rem}\label{r2.4}
		Remark \ref{r2.2} (i) holds accordingly: if $u$ is a distributional solution to \eqref{e1.13}  in the sense of Definition \ref{d2.3} with initial condition $\nu$ such that $t \mapsto u(t,x)dx$ is vaguely continuous, then $J_\varphi = \emptyset$ and, in this case, if the second and third condition in \eqref{e2.4} hold with $L^1_{\textup{loc}}([0,\infty)\times \R^d)$ and $L^1_{\loc}([0,\infty)\times \R^d; \R^d)$, replacing $L^1_{\rm loc}((0,\9)\times\rrd)$ and $L^1_{\rm loc}((0,\9)\times\rrd;\rrd)$,  respectively, then \eqref{e2.5} and $u$ having initial condition $\nu$ are equivalent to 
		\begin{equation}\label{e2.6}
		\int_{\R^d} \psi \,u(t)\,dx = \int_{\R^d} \psi \,d\nu + \int_0^t \int_{\R^d} \big( |\nabla u|^{p-2} \Delta \psi+ \nabla( |\nabla u|^{p-2}) \cdot \nabla \psi\big)u\,dx dt,\  \forall t \geq 0
		\end{equation}
		for every $\psi \in C^2_0(\R^d)$.
	\end{rem}
	
	\begin{lem}\label{l2.5}
		Assume $u\in L^1_{\textup{loc}}((0,\infty); W^{1,1}_{\loc}(\R^d))$ satisfies $\nabla u \in L^{p-1}_{\textup{loc}}((0,\infty);L^{p-1}_{\textup{loc}}(\R^d;\R^d))$ and \eqref{e2.4}. Then $u$ satisfies Definition {\rm\ref{d2.1}} with initial condition $\nu$ if and only if $u$ satisfies Definition {\rm\ref{d2.3}} with initial condition $\nu$. 
		
		If for $\mu_0:=\nu$, $\mu_t(dx):=u(t,x)dx,$ $t>0$, in addition, we have that  $[0,\9)\ni t\mapsto \mu_t$ is vaguely continuous and the above local in time integrability conditions hold on $[0,\infty)$ instead of $(0,\infty)$, then $u$ satisfies \eqref{e2.3} for all $\psi \in C^1_0(\R^d)$ if and only if it satisfies \eqref{e2.6} for all $\psi \in C^2_0(\R^d)$.
	\end{lem}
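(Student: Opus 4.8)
The plan is to reduce both equivalences in the lemma to a single space‑time integration‑by‑parts identity and then to prove that identity by a truncation argument. Throughout write $v:=|\nabla u|^{p-2}$. Since the notion ``$u$ has initial condition $\nu$'' is \emph{verbatim} the same condition \eqref{e2.2} in Definitions~\ref{d2.1} and~\ref{d2.3}, and since \eqref{e2.4} is assumed, the first assertion amounts to showing that \eqref{e2.1} holds for all $\vf\in C^1_0((0,\9)\times\rrd)$ if and only if \eqref{e2.5} holds for all $\vf\in C^2_0((0,\9)\times\rrd)$. For $\vf\in C^2_0((0,\9)\times\rrd)\subseteq C^1_0((0,\9)\times\rrd)$ the sum of the integrand of \eqref{e2.1} and the integrand of \eqref{e2.5} equals $v\,\nabla u\cdot\nabla\vf+u\,\nabla v\cdot\nabla\vf+vu\,\Delta\vf$ (the two $u\,\frac{d}{dt}\vf$ terms cancel), so once we establish
\begin{equation}\label{e2star}
\int_0^\9\int_{\rrd}\big(v\,\nabla u\cdot\nabla\vf+u\,\nabla v\cdot\nabla\vf+vu\,\Delta\vf\big)\,dx\,dt=0\qquad\text{for all }\vf\in C^2_0((0,\9)\times\rrd),
\end{equation}
it will follow that, for each such $\vf$, \eqref{e2.1} and \eqref{e2.5} are equivalent. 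Heuristically \eqref{e2star} is just $\int_0^\9\int_{\rrd}\divv(vu\,\nabla\vf)\,dx\,dt=0$; the issue is to justify the Leibniz rule $\nabla(vu)=v\nabla u+u\nabla v$ with only the integrability from \eqref{e2.4} at hand.

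To prove \eqref{e2star} I would truncate. For $M,N\in\N$ put $u^N:=(-N)\vee(u\wedge N)$ and $v^M:=v\wedge M$; for a.e.\ $t$ one has $u^N(t,\cdot),v^M(t,\cdot)\in W^{1,1}_{\loc}(\rrd)\cap L^\9(\rrd)$, with $\nabla u^N=\ind_{\{|u|\le N\}}\nabla u$ and $\nabla v^M=\ind_{\{v\le M\}}\nabla v$. The product of two bounded $W^{1,1}_{\loc}$‑functions again lies in $W^{1,1}_{\loc}$ and satisfies the Leibniz rule (standard: mollify one factor, and use that the other stays bounded to pass to the limit in the three products), so $v^Mu^N\in W^{1,1}_{\loc}(\rrd)$ with $\nabla(v^Mu^N)=v^M\nabla u^N+u^N\nabla v^M$ for a.e.\ $t$. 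Hence $v^Mu^N\,\nabla\vf(t,\cdot)$ is a compactly supported $W^{1,1}(\rrd;\rrd)$‑vector field, so $\int_{\rrd}\divv(v^Mu^N\,\nabla\vf)\,dx=0$ (test the distributional divergence against a cutoff $\equiv1$ on the support); expanding $\divv(v^Mu^N\nabla\vf)=\nabla(v^Mu^N)\cdot\nabla\vf+v^Mu^N\Delta\vf$ and integrating over $t$ yields
\begin{equation*}
\int_0^\9\int_{\rrd}\big(v^M\nabla u^N\cdot\nabla\vf+u^N\nabla v^M\cdot\nabla\vf+v^Mu^N\,\Delta\vf\big)\,dx\,dt=0\qquad\text{for all }M,N\in\N.
\end{equation*}
Letting $M,N\to\9$ (say along $M=N$), the integrand converges pointwise a.e.\ to that of \eqref{e2star} (because $u^N\to u$, $\nabla u^N\to\nabla u$, $v^M\to v$, $\nabla v^M\to\nabla v$ a.e.), and it is bounded in absolute value, uniformly in $M$ and $N$, by $\big(|\nabla u|^{p-1}+\big|u\,\nabla(|\nabla u|^{p-2})\big|+|\nabla u|^{p-2}|u|\big)\big(|\nabla\vf|+|\Delta\vf|\big)$, a product of an $L^1_{\loc}((0,\9)\times\rrd)$‑function (the first factor lies in $L^1_{\loc}$ by $\nabla u\in L^{p-1}_{\loc}$ together with \eqref{e2.4}) and a bounded function with compact support, hence in $L^1((0,\9)\times\rrd)$. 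Dominated convergence then gives \eqref{e2star}.

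With \eqref{e2star} in hand, the first assertion is immediate: for $\vf\in C^2_0$, \eqref{e2.1} and \eqref{e2.5} are equivalent, which already gives ``Definition~\ref{d2.1} $\Rightarrow$ Definition~\ref{d2.3}''. For the converse, \eqref{e2.5} for all $\vf\in C^2_0$ gives \eqref{e2.1} for all $\vf\in C^2_0$, and a general $\vf\in C^1_0((0,\9)\times\rrd)$ is handled by approximating it with mollifications $\vf_n:=\vf*\eta_{1/n}\in C^\9_0$ in $(t,x)$ (for large $n$ the supports stay inside a fixed compact $K\subseteq(0,\9)\times\rrd$, and $\vf_n\to\vf$, $\frac{d}{dt}\vf_n\to\frac{d}{dt}\vf$, $\nabla\vf_n\to\nabla\vf$ uniformly), passing to the limit in \eqref{e2.1} via $u\in L^1(K)$ and $|\nabla u|^{p-2}\nabla u\in L^1(K;\rrd)$ (the latter because $\big||\nabla u|^{p-2}\nabla u\big|=|\nabla u|^{p-1}$ and $\nabla u\in L^{p-1}_{\loc}$). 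For the second assertion, under the extra hypotheses (vague continuity of $t\mapsto\mu_t$ and the integrability bounds on $[0,\9)$) the condition \eqref{e2.2} is automatically satisfied, Remark~\ref{r2.2}(i) identifies ``\eqref{e2.3} for all $\psi\in C^1_0(\rrd)$'' with ``\eqref{e2.1} for all $\vf\in C^1_0((0,\9)\times\rrd)$ together with initial condition $\nu$'', and Remark~\ref{r2.4} identifies ``\eqref{e2.6} for all $\psi\in C^2_0(\rrd)$'' with ``\eqref{e2.5} for all $\vf\in C^2_0((0,\9)\times\rrd)$ together with initial condition $\nu$''; combining these with the first assertion proves the claim.

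The main obstacle is \eqref{e2star}. The naive route---mollifying $u$ and $|\nabla u|^{p-2}$ simultaneously in the space variable and letting the mollification parameter tend to $0$---fails, since products of $L^1_{\loc}$‑convergent sequences need not converge in $L^1_{\loc}$, and forcing convergence by H\"older's inequality would require $u$ to lie in a Lebesgue space tied to its Sobolev exponent, which would restrict $p$. Truncating $u$ and $|\nabla u|^{p-2}$ is precisely what produces the $L^\9$‑bounds needed both to run the Leibniz rule and to obtain a majorant that is \emph{uniform} in the truncation levels, while \eqref{e2.4} and the hypothesis $\nabla u\in L^{p-1}_{\loc}$ supply exactly the $L^1_{\loc}$‑functions $|\nabla u|^{p-1}$, $\big|u\,\nabla(|\nabla u|^{p-2})\big|$ and $|\nabla u|^{p-2}|u|$ that serve as this majorant.
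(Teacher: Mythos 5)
Your proposal is correct and takes essentially the same route as the paper's own (very terse) proof, which simply invokes integration by parts; your truncation argument with $u^N$ and $v^M$ supplies a careful justification of the Leibniz rule $\nabla(|\nabla u|^{p-2}u)=|\nabla u|^{p-2}\nabla u+u\,\nabla(|\nabla u|^{p-2})$ and of the dominated‑convergence passage that the paper leaves implicit. The only trivial slip to note is that the two left‑hand‑side integrals in \eqref{e2.1} and \eqref{e2.5} are negatives of one another (their sum is zero by your identity \eqref{e2star}), so each vanishes iff the other does, which is exactly what is needed.
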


	\begin{proof}
		The second assertion follows from the first, since by Remarks \ref{r2.2} and \ref{r2.4}, under the additional assumption, \eqref{e2.3} and \eqref{e2.6} are equivalent to Definitions \ref{d2.1} and \ref{d2.3}, respectively. The first assertion follows, since under the assumption, the integrals on the left hand side  of \eqref{e2.1} and \eqref{e2.5} are equal for each $\vf \in C^2_0((0,\infty)\times \R^d)$, as can be seen by integration by parts. 	
	\end{proof}	
	\begin{remark}\label{r2.6} We point out that the fundamental solutions $w^y$ from \eqref{e1.18} solve both \eqref{e1.2} and \eqref{e1.13} in the sense of Definitions \ref{d2.1} and \ref{d2.3}, respectively, and  all additional assumptions from  Remarks \ref{r2.2} and \ref{r2.4} are satisfied. Details are given in Section \ref{s4}. 
		\end{remark}
		
	\section{Corresponding  McKean--Vlasov SDE and its solution}\label{s3}
	
	We now turn to Key Step 2 from the introduction, i.e. we associate with \eqref{e1.2} (more precisely, with \eqref{e1.13}) the McKean--Vlasov SDE \eqref{e1.15}--\eqref{e1.16}. Equation \eqref{e1.13}  is a nonlinear Fokker--Planck equation with coefficients as in \eqref{e1.13'}. These coefficients are 	defined for those measures $\mu$ in $ \Mscr^+$ such that $\mu = u(x)dx$ with $u, \,|\nabla u |^{p-2} \in W^{1,1}_{\textup{loc}}(\R^d)$. 
	
\begin{dfn}\label{d3.1}\
		\begin{enumerate}
			\item [(i)] 	A \emph{probabilistically weak solution} (short: \textit{solution}) to \eqref{e1.15}--\eqref{e1.16} is a triple consisting of a filtered probability space $(\Omega, \Fscr, (\Fscr_t)_{t\geq 0}, \mathbb{P})$, an $(\Fscr_t)$-adapted $\R$-valued stochastic process $X=(X(t))_{t\geq 0}$ and an $(\Fscr_t)$-Brownian motion $W$ such that 
			\begin{equation*}\barr{c}
			\dd	\mathbb{E}[\int_0^T | \nabla(|\nabla u(t,X(t))|^{p-2})| +  |\nabla  u(t,X(t))|^{p-2} dt] < \infty, \ \forall T>0,\smallskip\\
			\mathcal{L}_{X(t)}=u(t,x)dx,\ \ff t>0,\earr
			\end{equation*}
			and $\mathbb{P}$-a.s.
			\begin{equation*}
			X(t) = X_0 + \int_0^t   \nabla(|\nabla u(s,X(s))|^{p-2})ds+  \sqrt{2}\int_0^t  |\nabla  u(s,X(s))|^{\frac{p-2}{2}}  dW(s), \quad\forall  t \geq 0.
			\end{equation*}
			We say \textit{$X$ has initial condition} $\nu \in \mathcal{P}$, if $\mathcal{L}_{X(0)} = \nu$. 	Often, we shortly refer to $X$ as the solution.
			\item[(ii)] We call the path law $X_*\mathbb{P} \in \mathcal{P}(C([0,\infty),\R^d)$ of a solution to \eqref{e1.15}--\eqref{e1.16} a \textit{solution path law} to \eqref{e1.15}--\eqref{e1.16}. We say \textit{$P$ is the unique solution path law} ({\it with initial condition}  
			$\nu \in \mathcal{P}=\mathcal{P}(\rrd)$), 
			if $X_*\mathbb{P} = P$ for all weak solutions $X$ with  initial condition $\nu$. 
		\end{enumerate}
	\end{dfn}

	\begin{remark}\label{r3.2}\
		\begin{itemize}
	\item[(i)] 	 It is obvious how to generalize the previous definition to initial times $s>0$. In this case, the initial condition is the pair $(s,\nu) \in [0,\infty)\times \mathcal{P}$, and the solution is defined  on $[s,\9)$.
	
	\item[(ii)] As part of the definition it is assumed that for $dt$-a.e. $t >0$ the measure $\mathcal{L}_{X(t)}\in\calp$ has a Radon--Nikodym density $u(t,x)$ with respect to $dx$ with sufficient Sobolev regularity in order to make sense of the appearing integrands, an aspect we explicitly deal with for the fundamental solutions from  \eqref{e1.18} in Section \ref{s4}. 
\end{itemize}
\end{remark}

By a fundamental result by Trevisan \cite[Theorem 2.5]{43a}, known as {\it superposition principle}, in conjunction with \cite[Proposition 2.2.3]{22'} and \cite[Proposition 4.11]{24'}, for any weakly continuous (in general measure-valued) probability solution $(\mu_t)_{t\ge0},\mu_t\in\calp$,  to a linear \FP\ equation with measurable coefficients $a_{ij},b_i:(0,\9)\times\rrd\to\rr$, $1\le i,j\le d$, in the sense of Definition \ref{dD.1} there exists a (probabilistically weak) solution $X=(X(t))_{t\ge0}$ to the SDE 
$$dX(t)=b(t,X(t))dt+\sqrt{2}\,\sigma(t,X(t))dW(t),\ t\ge0,$$
where $b=(b_i)_{i\le d},$ $\sigma=(\sigma_{ij})_{i,j\le d}$ with $(\sigma\sigma^\top)_{ij}=a_{ij}$, and $W$ a $d$-dimensional Brownian motion, with one-dimensional time marginals  $\mathcal{L}_{X(t)}=\mu_t,$ ${t\ge0}$. 

In order to associate with solutions to \eqref{e1.2} (more precisely, \eqref{e1.13}) solutions to \eqref{e1.15}--\eqref{e1.16}, we follow the approach developed in \cite{3a},   \cite{4a}, i.e. we use this superposition principle in the nonlinear case by first fixing a probability solution $u$ to \eqref{e1.13} in the sense of Definition \ref{d2.3} in the coefficients \eqref{e1.13'} to obtain linear (that is, $(t,x)$-depending) coefficients. Then, we apply Trevisan's result to the same solution $u$ for the resulting linear equation. Clearly, the process $X=(X(t))_{t\ge0}$ obtained this way solves \eqref{e1.15}--\eqref{e1.16}.  Thus, we obtain the following new result.

\begin{theorem}\label{t3.3}
		Let $u$ be a weakly continuous probability solution to \eqref{e1.2} in the sense of Definition {\rm\ref{d2.1}} such that $$|\nabla u|^{p-2} \in L^1_{\textup{loc}}((0,\infty);W^{1,1}_{\textup{loc}}(\R^d)).$$ If 
		\begin{equation}\label{e4.1}
		\int_0^T \int_{\R^d}\big( |\nabla u|^{p-2} + |\nabla (|\nabla u|^{p-2})|\big)\, u\, dx dt < \infty, \quad \forall T>0,
		\end{equation}
		then there exists a   solution $X=(X(t))_{t\geq 0}$ to the McKean--Vlasov 
		SDE \eqref{e1.15}--\eqref{e1.16} in the sense of Definition {\rm\ref{d3.1}} such that $u(t,x)dx = \mathcal{L}_{X(t)}$ for all $t \geq 0$.
	\end{theorem}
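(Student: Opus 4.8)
The plan is to realize Theorem \ref{t3.3} as a direct application of the nonlinear superposition principle discussed in the paragraph preceding the statement, with the only real work being the verification that the hypotheses on $u$ put us exactly in the situation where that principle applies. First I would recall that, by Lemma \ref{l2.5}, under the stated regularity $\nabla u \in L^{p-1}_{\loc}((0,\infty);L^{p-1}_{\loc}(\R^d;\R^d))$ together with $|\nabla u|^{p-2}\in L^1_{\loc}((0,\infty);W^{1,1}_{\loc}(\R^d))$, the function $u$ is simultaneously a solution to \eqref{e1.2} in the sense of Definition \ref{d2.1} and a distributional solution to the nonlinear Fokker--Planck equation \eqref{e1.13} in the sense of Definition \ref{d2.3}, with the same initial condition $\nu$. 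Moreover, since $u$ is a probability solution, $\mu_t := u(t,x)\,dx \in \calp$ for $dt$-a.e.\ $t>0$, and since $u$ is weakly continuous the curve $t\mapsto\mu_t$ is a weakly continuous curve in $\calp$ (after passing to the weakly continuous version, whose existence is guaranteed; one extends to $t=0$ with value $\nu$ as in Remark \ref{r2.2}, so in particular $\nu\in\calp$ as well).

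Next I would freeze the nonlinearity: define the time-dependent linear coefficients
\begin{equation*}
a_{ij}(t,x) := \delta_{ij}\,|\nabla u(t,x)|^{p-2}, \qquad b_i(t,x) := \partial_{x_i}\big(|\nabla u(t,x)|^{p-2}\big),
\end{equation*}
which are Borel measurable on $(0,\infty)\times\R^d$ by the Sobolev regularity of $|\nabla u|^{p-2}$, and the matrix $(a_{ij})$ is symmetric and nonnegative definite, with $\sigma_{ij}(t,x):=\delta_{ij}|\nabla u(t,x)|^{\frac{p-2}{2}}$ satisfying $\sigma\sigma^\top = (a_{ij})$. With this freezing, the distributional solution identity \eqref{e2.5} for $u$ says precisely that $(\mu_t)_{t\ge0}$ is a (measure-valued) probability solution to the \emph{linear} Fokker--Planck equation with coefficients $a_{ij},b_i$ in the sense of Definition \ref{dD.1}, and the integrability hypothesis \eqref{e4.1} is exactly the condition
\begin{equation*}
\int_0^T\!\!\int_{\R^d}\big(|a_{ij}(t,x)| + |b_i(t,x)|\big)\,\mu_t(dx)\,dt < \infty,\qquad \forall T>0,
\end{equation*}
i.e.\ the $L^1$-integrability of the coefficients against the solution curve required for Trevisan's superposition principle. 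Applying \cite[Theorem 2.5]{43a} together with \cite[Proposition 2.2.3]{22'} (refining \cite[Proposition 4.11]{24'}), we obtain a filtered probability space, an $(\Fscr_t)$-Brownian motion $W$, and an $(\Fscr_t)$-adapted continuous process $X=(X(t))_{t\ge0}$ solving $dX(t) = b(t,X(t))\,dt + \sigma(t,X(t))\,dW(t)$ with $\mathcal{L}_{X(t)} = \mu_t$ for all $t\ge0$. Unwinding the definitions of $b$ and $\sigma$, this process solves \eqref{e1.15} and, because $\mu_t = u(t,x)\,dx$, also the constraint \eqref{e1.16}; the integrability requirement in Definition \ref{d3.1}(i) is again nothing but \eqref{e4.1} rewritten along the marginals. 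Hence $X$ is a probabilistically weak solution in the sense of Definition \ref{d3.1}, with initial condition $\nu$.

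The step I expect to require the most care is not conceptual but bookkeeping: checking that the abstract hypotheses of the quoted superposition principle are met verbatim — in particular that the coefficients are globally Borel measurable (not merely defined $\mu_t$-a.e.), that the weakly continuous version of $t\mapsto\mu_t$ is the object to which the theorem is applied, and that the local-in-time $L^1$ bound \eqref{e4.1} against $\mu_t$ suffices (Trevisan's formulation uses exactly such an integrability-against-the-curve condition, which is why no pointwise growth bounds on $|\nabla u|$ are needed). One should also note that Definition \ref{d3.1}(i) allows $X$ to start from an arbitrary $\nu\in\calp$; here $\nu$ is the initial condition of the given solution $u$, and the marginal identity at $t=0$ follows from weak continuity of $t\mapsto\mu_t$ together with \eqref{e2.2}. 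No uniqueness is asserted, so there is nothing further to prove.
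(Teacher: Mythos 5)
Your proposal is correct and follows essentially the same route as the paper's proof: use Lemma \ref{l2.5} together with \eqref{e4.1} to recast $u$ as a distributional probability solution of the nonlinear Fokker--Planck equation \eqref{e1.13}, freeze the coefficients at $u$ to obtain a linear FPE in the sense of Definition \ref{dD.1}, and apply Trevisan's superposition principle to produce the desired weak solution. The only difference is that you spell out the bookkeeping (measurability, the weakly continuous version, matching \eqref{e4.1} to the integrability-against-the-curve condition) more explicitly than the paper does, but the substance and key references are identical.
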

	\begin{proof}
		By \eqref{e4.1} and Lemma \ref{l2.5}, $u$ solves equation \eqref{e1.13}  in the sense of Remark \eqref{r2.4}. Thus, $u(t,x)dx$ solves the linear \FP\ equation with coefficients \begin{equation}\label{e3.2'}
			a_{ij}(t,x)=\delta_{ij}|\nabla u(t,x)|^{p-2},\ b_i(t,x)=\frac\pp{\pp x_i}(|\nabla u(t,x)|^{p-2}),\ 1\le i,j\le d,\end{equation}in the sense of Definition {\rm\ref{dD.1}}. By \cite[Theorem 2.5]{43a}, there is a 
		 solution $X=(X(t))_{t\ge0}$ to the cor\-res\-pon\-ding (non-distribution dependent) SDE with coefficients $b$ and $\sqrt{2}\,\sigma$, where $b=(b_1,...,b_d)$ is as in \eqref{e3.2'} and $(\sigma\sigma^\top)_{ij}=a_{ij}$, with $a_{ij}$ as in \eqref{e3.2'},  such that $$\mathcal{L}_{X(t)}=u(t,x)dx,\ t\ge0.$$		
		Clearly, $X=(X(t))_{t\ge0}$  solves the McKean--Vlasov SDE \eqref{e1.15}--\eqref{e1.16}.
		\end{proof}

By Theorem \ref{t3.3}, solutions to the $p$-Laplace equation \eqref{e1.2} are represented as the one-dimensional time marginal density curves of stochastic processes, which solve \eqref{e1.15}--\eqref{e1.16}.

\section{Dirac initial conditions and fundamental solutions}\label{s4}

Now we implement Key Step 2 specifically for the fundamental solutions $w^y$ from \eqref{e1.18}. As said before, $w^y,$ $y\in\rrd$, solves both \eqref{e1.2} and \eqref{e1.13} in the sense of Definition \ref{d2.1} and \ref{d2.3}, respectively. This follows from part (i) of the following proposition and from the proof of Theorem \ref{t4.2} together  with Lemma \ref{l2.5}, respectively. 	
We collect some properties of $w^y$ in the following proposition. 

	\begin{prop}\label{p4.1} Let $y\in\rrd$. 
		\begin{enumerate}
			\item [\rm(i)] 	$w^y$ is a weakly continuous probability solution to \eqref{e1.2} with initial condition $\nu = \delta_y$ in the sense of Definition {\rm\ref{d2.1}}.
			\item[\rm(ii)] $w^y(t)$ is radially symmetric and compactly supported for every $t>0$. More precisely $\supp w^y(t)$ is contained in the ball centered at $y$ with radius $\(\frac{C_1t^\frac{kp}{d(p-1)}}{q}\)^{\frac{p-1}{p}}$.
			\item[\rm(iii)] $w^y$ is the unique nonnegative vaguely continuous solution to \eqref{e1.2} in the sense of Definition \ref{d2.1} with initial condition $\delta_y$.
		\end{enumerate}
	\end{prop}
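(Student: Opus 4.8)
The plan is to establish parts (i)--(iii) of Proposition \ref{p4.1} essentially by direct computation, exploiting the explicit closed form of the Barenblatt profile $w^y$ in \eqref{e1.18}. I would first reduce to $y=0$ by translation invariance of both \eqref{e1.2} and the solution concept, writing $w := w^0$ and $w^y(t,x) = w(t,x-y)$.

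For part (ii), I would read off the support statement directly from \eqref{e1.18}: since $f_+ = \max(f,0)$, the expression $\bigl(C_1 - q t^{-kp/(d(p-1))}|x|^{p/(p-1)}\bigr)^{(p-1)/(p-2)}_+$ is nonzero exactly when $q t^{-kp/(d(p-1))}|x|^{p/(p-1)} < C_1$, i.e. when $|x| < \bigl(C_1 t^{kp/(d(p-1))}/q\bigr)^{(p-1)/p}$; radial symmetry in $x$ is manifest since $w$ depends on $x$ only through $|x|$. I would also note here the regularity facts needed later: on the interior of its support $w^y(t,\cdot)$ is $C^1$ (indeed smooth where it is strictly positive), and near the free boundary $|x-y| = R(t)$ the exponent $(p-1)/(p-2) > 1$ guarantees that $w^y(t,\cdot) \in W^{1,1}_{\loc}(\rrd)$ with $|\nabla w^y|^{p-2} \in W^{1,1}_{\loc}(\rrd)$ — a point worth making explicit since it is used when invoking Definition \ref{d2.1} and Lemma \ref{l2.5}, and it is the reason the restriction $p > 2(1+1/d)$ is \emph{not} needed for this proposition (as stated in the introduction).

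For part (i), the strategy is: (a) verify that $w$ solves \eqref{e1.2} in the classical pointwise sense on the open set where $w > 0$ — this is the classical Barenblatt computation (plug the self-similar ansatz $w(t,x) = t^{-k}\phi(t^{-k/(d(p-1))}\,|x|^{p/(p-1)}\text{-type variable})$ into \eqref{e1.2} and check the ODE for $\phi$, with the constants $k$, $q$ chosen precisely so that $|w(t)|_{L^1} $ is $t$-independent and the ansatz is consistent); (b) upgrade to a distributional solution on all of $(0,\infty)\times\rrd$ in the sense of \eqref{e2.1}, which requires checking that no spurious distribution is produced at the free boundary — here one integrates by parts over $\{w>0\}$ and controls the boundary term using the $W^{1,1}_{\loc}$-regularity and the fact that $|\nabla w|^{p-2}\nabla w$ vanishes (in the appropriate trace sense) at $|x-y| = R(t)$ because $\nabla w$ stays bounded there; (c) verify the $L^1$-mass is $1$ for all $t>0$ by the defining property of $C_1$ together with a change of variables, and that the required local integrability of $\nabla w$ in $L^{p-1}_{\loc}$ holds; (d) check weak continuity $t\mapsto w^y(t,x)dx$ including at $t=0$, where $w^y(t,\cdot)dx \to \delta_y$ weakly: this follows since the mass concentrates in the shrinking ball of radius $R(t)\to 0$ as $t\downarrow 0$ while the total mass stays $1$, so for $\psi\in C_b(\rrd)$, $\int \psi\, w^y(t)\,dx \to \psi(y)$. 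By Remark \ref{r2.2}(ii) the weak continuity then makes \eqref{e2.2} automatic with $J_\psi = (0,\infty)$.

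For part (iii), uniqueness among nonnegative vaguely continuous solutions with initial datum $\delta_y$, I would appeal to known uniqueness theory for the porous-medium/$p$-Laplacian Cauchy problem with measure data — this is where I expect the main obstacle, since it is not a computation but requires citing or adapting a comparison/uniqueness result (e.g. from the references \cite{24a} or \cite{36a}) for solutions with a singular initial trace, taking care that the solution class in Definition \ref{d2.1} (merely $L^1_{\loc}$ with $\nabla u \in L^{p-1}_{\loc}$) matches the hypotheses of the cited theorem, possibly after first using Remark \ref{r2.2}(ii) to pass to the vaguely continuous version and establish conservation of mass $|u(t)|_{L^1} = 1$. The cleanest route is probably to reduce to the uniqueness of the source-type (Barenblatt) solution, which is classical; I would cite this and indicate the adaptation rather than reprove it, since reproving measure-data uniqueness for the $p$-Laplacian from scratch would be disproportionate here.
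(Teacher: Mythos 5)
Your proposal matches the paper's approach: the paper likewise treats (i) and (ii) as direct consequences of the explicit formula \eqref{e1.18} and disposes of (iii) by citing \cite{Barenblatt-paper,Barenblatt-book,24a}. Your plan simply spells out the "can directly be inferred" step in more detail (reduction to $y=0$, classical verification on $\{w>0\}$, flux vanishing across the free boundary since $(p-1)/(p-2)>1$, mass normalization, and shrinking support giving $w^y(t)\,dx\to\delta_y$), and your remark that $p>2(1+1/d)$ is not needed here is correct and consistent with the paper's remarks in the introduction.
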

We note that $w^y$ has the expected parabolic regularity, i.e. while the initial condition is a degenerate measure, at each time $t>0$, $w^y(t)$ is a continuous, weakly differentiable, compactly supported function on $\rrd$. 

\begin{proof}  (i) and (ii) can directly be inferred  from the definition of $w^y$. For (iii), we refer to \cite{24a}.
	\end{proof}
Next, we show that Theorem \ref{t3.3} applies to $w^y$.

	\begin{theorem}\label{t4.2}
		Let $d\geq 1$, $p>2$,  and $y\in \rrd$. There exists a solution $X^y=(X^y(t))_{t\geq 0}$ to the McKean--Vlasov SDE  \eqref{e1.15}--\eqref{e1.16} with $X^y(0) = y$ such that $\mathcal{L}_{X^y(t)} = w^y(t,x)dx$ for all $t>0$.
	\end{theorem}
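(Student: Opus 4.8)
The plan is to verify that the Barenblatt solution $w^y$ meets all the hypotheses of Theorem \ref{t3.3}, and then simply invoke that theorem. By Proposition \ref{p4.1}(i), $w^y$ is already known to be a weakly continuous probability solution to \eqref{e1.2} with initial condition $\delta_y$, so the only things left to check are the two regularity/integrability conditions: first, that $|\nabla w^y|^{p-2} \in L^1_{\textup{loc}}((0,\infty);W^{1,1}_{\textup{loc}}(\R^d))$, and second, the integrability bound \eqref{e4.1}, i.e.
\begin{equation*}
\int_0^T \int_{\R^d}\big( |\nabla w^y|^{p-2} + |\nabla (|\nabla w^y|^{p-2})|\big)\, w^y\, dx\, dt < \infty, \quad \forall T>0.
\end{equation*}
Once these are established, Theorem \ref{t3.3} directly yields the desired solution $X^y$ to \eqref{e1.15},\eqref{e1.16}, and the normalization $|w^y(t)|_{L^1} = 1$ together with the initial condition $\delta_y$ forces $\mathcal L_{X^y(0)} = \delta_y$, i.e. $X^y(0) = y$ $\mathbb{P}$-a.s.

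The computational core is therefore an explicit estimate on the derivatives of $w^y$. I would exploit the closed form \eqref{e1.18}: abbreviating $r = |x-y|$ and writing $w^y(t,x) = t^{-k}\big(C_1 - q t^{-\gamma} r^{p/(p-1)}\big)_+^{(p-1)/(p-2)}$ with $\gamma = \frac{kp}{d(p-1)}$, one computes $\nabla w^y$ on the open support and sees that $\nabla w^y$ vanishes at the free boundary (the exponent $\frac{p-1}{p-2} > 1$ gives a $C^1$ match to zero there), while $|\nabla w^y|^{p-2}$ is a power of $\big(C_1 - q t^{-\gamma} r^{p/(p-1)}\big)_+$ times smooth functions of $t$ and $r$. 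The key point is that $|\nabla w^y|^{p-2}$ is continuous and compactly supported in $x$ for each fixed $t>0$ (by Proposition \ref{p4.1}(ii), the support is a ball of radius $\sim t^{\gamma(p-1)/p}$), and its weak gradient — which one must check is genuinely an $L^1_{\loc}$ function, not merely a distribution, i.e. no singular boundary contribution — is likewise locally integrable; the potential singularity at $r=0$ coming from $\nabla r$ is mild enough (a negative power of $r$ that is integrable in dimension $d\geq 1$ against $r^{d-1}\,dr$, after one checks the precise exponent) and the behaviour near the free boundary is benign because of the positive fractional power. For \eqref{e4.1}, the extra factor $w^y$ only helps: it is bounded on $(\varepsilon,T)\times\R^d$ for every $\varepsilon>0$ and the blow-up of the coefficients as $t\downarrow 0$ must be checked to be integrable in $t$ near $0$ — here one tracks the powers of $t$ in $t^{-k}$, $t^{-\gamma}$ and the resulting powers after differentiation, and verifies that the net exponent of $t$ is $> -1$. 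Since for this theorem $d\geq1$ and $p>2$ are the only standing assumptions (the stronger constraint $p > 2(1+1/d)$ is needed only later, in Key Step 3), the estimate must go through in this full generality.

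The main obstacle I expect is precisely this bookkeeping of exponents near $t=0$ and near the spatial free boundary $r = (C_1 t^\gamma/q)^{(p-1)/p}$, together with the point at $r = 0$. Two things need care: (a) confirming that $|\nabla w^y|^{p-2}$ has no distributional (surface-measure) part in its weak gradient across the free boundary — this follows from the fact that both $|\nabla w^y|^{p-2}$ and its classical gradient extend continuously by zero across the boundary, because the relevant power of $\big(C_1 - q t^{-\gamma} r^{p/(p-1)}\big)_+$ is strictly positive; and (b) the singularity structure at the center $r=0$, where $\nabla(|\nabla w^y|^{p-2})$ may contain a factor behaving like $r^{-\alpha}$ for some $\alpha$ that must be shown to satisfy $\alpha < d$ so that it is integrable in $x$. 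I would handle (a) by an approximation/cutoff argument (mollify the positive part, or use that $s \mapsto s_+^\theta$ with $\theta \geq 1$ is $C^1$) and (b) by a direct power count. Neither is deep, but both are the kind of explicit estimate the paper flags as relying "very heavily on the explicit form of the Barenblatt solution," so I would present these computations in detail — likely relegating the most tedious parts to an appendix, as the paper's structure (Appendices A--C) suggests.
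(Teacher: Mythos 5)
Your proposal follows essentially the same route as the paper: reduce to verifying the hypotheses of Theorem \ref{t3.3} for the Barenblatt solution via Proposition \ref{p4.1}, then establish $|\nabla w^y|^{p-2} \in L^1_{\textup{loc}}((0,\infty);W^{1,1}_{\textup{loc}}(\R^d))$ and \eqref{e4.1} by explicit differentiation of \eqref{e1.18} and power-counting in $t$ and $r=|x-y|$, with the same two delicate points (absence of a surface-measure term at the free boundary because the positive part enters $|\nabla w^y|^{p-2}$ to a positive power, and integrability of the $|x|^{-1/(p-1)}$ singularity at the center). The paper additionally reduces to $y=0$ by translation invariance and records the resulting formulas \eqref{auxa}--\eqref{e4.5} explicitly, relegating the detailed exponent bookkeeping to Appendix A, exactly as you anticipated.
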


\begin{kor}\label{c4.3} Let $s,t_0\ge0,\ y\in\rrd, \ \nu=w^y(t_0,x)dx$ $($with the convention $w^y(0,x)dx=\delta_y)$. There exists a solution $X^{s,(t_0,y)}=(X^{s,(t_0,y)}(t))_{t\ge s}$ to \eqref{e1.15}, \eqref{e1.16} on $[s,\9)$ such that $\mathcal{L}_{X^{s,(t_0,y)}(t)}=w^y(t_0+t-s,x)dx$ for all $t\ge s$.
\end{kor}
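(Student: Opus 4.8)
\textbf{Proof plan for Corollary \ref{c4.3}.}

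The plan is to reduce the statement to Theorem \ref{t4.2} by exploiting the self-similar structure of the Barenblatt solution together with the (spatial) homogeneity and autonomy of the $p$-Laplace equation. First I would observe that the $p$-Laplace equation \eqref{e1.2} is invariant under time-translations and under spatial translations: if $u$ solves \eqref{e1.2}, then so does $(t,x)\mapsto u(t+\delta-s, x)$ for fixed $\delta,s\ge0$, and in particular $(t,x)\mapsto w^y(\delta+t-s,x)$ is a weakly continuous probability solution to \eqref{e1.2} on $[s,\infty)$ (where for $\delta=0$ this is understood in the sense that its initial trace at $t=s$ is $\delta_y$, by Proposition \ref{p4.1}(i)). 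Moreover all the integrability properties needed to apply Theorem \ref{t3.3} — namely $|\nabla u|^{p-2}\in L^1_{\loc}((s,\infty);W^{1,1}_{\loc}(\R^d))$ and the bound \eqref{e4.1} on every $[s,T]$ — are inherited from the corresponding properties of $w^y$ established in (the proof of) Theorem \ref{t4.2}, since they are purely a matter of local-in-time integrability and are unaffected by a time-shift.

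Next I would invoke the version of Theorem \ref{t3.3} for initial time $s>0$, whose validity was noted in Remark \ref{r3.2}(i): applied to the shifted solution $\tilde u(t,x):=w^y(\delta+t-s,x)$ on $[s,\infty)$, it yields a probabilistically weak solution $X=(X(t))_{t\ge s}$ to the McKean--Vlasov SDE \eqref{e1.15},\eqref{e1.16} on $[s,\infty)$ with $\mathcal{L}_{X(t)}=\tilde u(t,x)dx=w^y(\delta+t-s,x)dx$ for all $t\ge s$, and with $\mathcal{L}_{X(s)}=\nu$ (for $\delta>0$ this is immediate; for $\delta=0$ it follows from the initial-condition part of Theorem \ref{t4.2}, i.e. $X(s)=y$ a.s.). Setting $X^{s,(\delta,y)}:=X$ gives exactly the asserted process. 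Alternatively, and perhaps more transparently, one can produce $X^{s,(\delta,y)}$ directly from the process $X^y$ of Theorem \ref{t4.2} (in the case $\delta=0$) or from the analogous process with initial condition $w^y(\delta,\cdot)dx$ at time $0$ by the deterministic time-shift $X^{s,(\delta,y)}(t):=X^y(\delta+t-s)$ together with the shifted Brownian motion $\tilde W(t):=W(\delta+t-s)-W(\delta)$, $t\ge s$, and the shifted filtration; one then checks that this triple satisfies all the requirements of Definition \ref{d3.1} on $[s,\infty)$, using that $w^y(\delta+t-s,x)dx=\mathcal{L}_{X^y(\delta+t-s)}$.

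I do not expect any serious obstacle here; this is essentially a bookkeeping argument. The only point requiring a little care is the case $\delta=0$ (so $\nu=\delta_y$): one must make sure the shifted solution $w^y(t-s,\cdot)$ genuinely attains the Dirac initial trace $\delta_y$ at $t=s$ in the sense of Definition \ref{d2.1}, and that the corresponding process starts at $y$ almost surely — both of which are already contained in Proposition \ref{p4.1}(i) and Theorem \ref{t4.2} after a trivial relabelling of the time variable. The integrability bookkeeping (that \eqref{e4.1} and the $W^{1,1}_{\loc}$-in-time condition survive the time-shift and hold on every compact subinterval of $[s,\infty)$) is routine, since for $\delta>0$ the endpoint $t=s$ corresponds to the strictly positive time $\delta$ where $w^y$ is smooth, and for $\delta=0$ it is exactly the situation handled in Theorem \ref{t4.2}.
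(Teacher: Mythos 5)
Your proposal is correct and matches what the paper leaves implicit: Corollary~\ref{c4.3} is stated without proof immediately after Theorem~\ref{t4.2}, and the intended argument is precisely the time-shift reduction you describe (invoking the autonomy of \eqref{e1.2}, Remark~\ref{r3.2}(i), and the fact that the integrability conditions \eqref{e4.1'} are local in time and only improve after a forward shift). Both of your routes — applying the $s>0$ version of Theorem~\ref{t3.3} to the shifted density, or shifting the process from Theorem~\ref{t4.2} directly together with the shifted Brownian motion and filtration — are valid and essentially equivalent; no further verification is needed.
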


	\paragraph{Proof of Theorem \ref{t4.2}.} Since $w^y(t,x)=w^0(t,x-y)$, we only consider the case $y=0$.  
	We show that Theorem \ref{t3.3} applies to $w^0$. By Proposition \ref{p4.1}, $w^0$ is a weakly continuous probability solution to \eqref{e1.2}. Thus it is sufficient to prove $|\nabla w^0|^{p-2} \in L^1_{\textup{loc}}((0,\infty);W^{1,1}_{\textup{loc}}(\R^d))$ and \begin{equation}
	\label{e4.1'}
	|\nabla w^0|^{p-2} w^0,\ \nabla(|\nabla w^0|^{p-2})w^0 \in L^1_{\textup{loc}}([0,\infty);L^1(\R^d)).
	\end{equation}  Below let $C>0$  denote a constant possibly changing from line to line, only depending on $d, p, C_1$ and $T$.
	A direct calculation yields
	\begin{equation}\label{auxa}
	\nabla	w^0(t,x) = -Ct^{-k(1   +  \frac{p}{d(p-1)} )}\big(C_1- q t^{\frac{-kp}{d(p-1)}}|x|^{\frac{p}{p-1}}\big)_+^{\frac{1}{p-2}} |x|^{\frac{2-p}{p-1}}x,
	\end{equation}
	and, therefore, 
	\begin{equation}\label{e4.4}
	|\nabla w^0(t,x)|^{p-2} = Ct^{-k(p-2)(1+\frac{p}{d(p-1)})}\big(C_1-qt^{\frac{-kp}{d(p-1)}}|x|^{\frac{p}{p-1}}\big)_+ |x|^{\frac{p-2}{p-1}}.
	\end{equation}
	Due to Proposition \ref{p4.1} (ii) and since $\big(C_1-qt^{\frac{-k}{d}}|x|^{\frac{p}{p-1}}\big)_+$ is uniformly bounded on $(0,\infty)\times \R^d$, we find for each $T>0$
	\begin{align*}
	\int_0^T \int_{\R^d} |\nabla w^0|&^{p-2}(t,x)w^0(t,x) \,dx dt \leq C \int_0^T t^{-k(p-2)(1+\frac{p}{d(p-1)})} \int_{\{|x|\leq \beta t^{\frac k d}\}} |x|^{\frac{p-2}{p-1}}w^0(t,x)dx  \,dt 
	\\&\leq C \int_0^T \bigg( t^{-k(p-2)(1+\frac{p}{d(p-1)})  +   \frac k d \frac{p-2}{p-1}} \int_{\R^d} w^0(t,x)dx \bigg) dt = \int_0^T  t^{-k(p-2)(1+\frac{p}{d(p-1)})  +   \frac k d \frac{p-2}{p-1}} dt,
	\end{align*}
	where we set $\beta := (C_1q^{-1})^{\frac{p-1}{p}}$ and used that $w^0(t,x)dx$ is a probability measure for each $t >0$ for the last equality. Since $-k(p-2)(1+\frac{p}{d(p-1)})  +   \frac k d \frac{p-2}{p-1}  >-1$ for $p>2$ , $|\nabla w^0|^{p-2} w^0 \in L^1_{\textup{loc}}([0,\infty);L^1(\R^d))$ follows. Likewise, since even $-k(p-2)(1+\frac{p}{d(p-1)}) > -1$, we have $|\nabla w^0|^{p-2} \in L^1_{\textup{loc}}([0,\infty);L^1(\R^d))$.
	Furthermore,  \eqref{e4.4} implies $|\nabla w^0(t)|^{p-2}\in W^{1,1}_{\textup{loc}}(\R^d)$ for all $t>0$, with	
	\begin{equation*}
\barr{l}
	\nabla|\nabla w^0(t,x)|^{p-2} 
	\\	\quad 
	\dd= C t^{-k(p-2)(1+\frac{p}{d(p-1)})}\bigg[\frac{p-2}{p-1}\big(C_1-qt^{\frac{-kp}{d(p-1)}}|x|^{\frac{p}{p-1}}\big)_+|x|^{-\frac{p}{p-1}} x     -   \frac{qp}{p-1} t^{\frac{-kp}{d(p-1)}}x\mathds{1}_{|x| \leq \beta t^{\frac k d}}(x)\bigg].\earr
	\end{equation*}
	The estimate
	\begin{equation}\label{e4.5}
	|	\nabla|\nabla w^0(t,x)|^{p-2} |  \leq C \mathds{1}_{|x| \leq \beta t^{\frac k d}}(x) t^{-k(p-2)(1+\frac{p}{d(p-1)})}\big[ |x|^{-\frac{1}{p-1}} +  t^{\frac{-kp}{d(p-1)}}|x|   \big]
	\end{equation}
	implies $|\nabla w^0 |^{p-2} \in L^1_{\textup{loc}}([0,\infty);W^{1,1}_{\textup{loc}}(\R^d))$. 	
	Finally, \eqref{e4.5} also yields $$\nabla(|\nabla w^0|^{p-2})w^0 \in L^1_{\textup{loc}}([0,\infty);L^1(\R^d)).$$ Thus, Theorem \ref{t3.3} applies to $w^0$ and the assertion follows.\hfill$\Box$\bigskip

	For the convenience of the reader, in Appendix A we give more detailed calculations for some claims from the previous proof.

	\section{Nonlinear Markov processes and $p$-Brownian motion}\label{s5}
		
	\subsection{The classical case: Heat equation and Brownian motion}\label{s5.1}
	
	For $p=2$, \eqref{e1.2} is the heat equation \eqref{e1.1}, with fundamental solution $u^y$, $y\in\rrd$,  as in \eqref{e1.17}. 
	The kernels $u^y(t,x)dx$, $t \geq 0, y \in \R^d$, 
	where we set $u^y(0,x)dx := \delta_y$, determine a unique Markov process $(P_y)_{y\in \R^d} \subseteq \mathcal{P}(C([0,\infty),\R^d))$ with one-dimensional time marginals $(\pi_t)_*P_y = u^y(t,x)dx$ (for the definition of a Markov process, see Subsection \ref{s5.2} below). This Markov process is Brownian motion, $P_0$ is the {\it classical Wiener measure}, and $P_y$ is obtained from $P_0$ as
	$$P_y=(T_y)_* P_0,$$where $T_y:C([0,\9),\rrd)\to C([0,\9),\rrd)$, $T_y(\omega):=\omega+y$, is the translation of a path by $y$. 	
	Equivalently, any stochastic process $X^y=(X^y(t))_{t\ge0}$ on a probability space $(\Omega,\mathcal{F},\mathbb{P})$ with path law $(X^y)_*\mathbb{P}=P_y$ is   called Brownian motion with start in $y$ $(y=0$:  {\it standard Brownian motion}).	The associated SDE is
	\begin{equation*}
	dX^y(t) = dW(t), \quad t \geq 0, \quad X^y(0) = y,
	\end{equation*}
	which is probabilistically weakly well-posed, i.e. the solution path law of any solution equals $P_y$.
	
	\subsection{Linear and nonlinear Markov processes}\label{s5.2}
	The following definition of nonlinear Markov processes is inspired by McKean \cite{35a} and elaborated on in \cite{37a}. As mentioned in the introduction, it is the natural extension of the usual Markov property (satisfied by the family of solution path laws to a weakly well-posed (non-distribution dependent) SDE) to nonlinear FPEs and their associated McKean--Vlasov SDEs. For more details on the motivation and properties of nonlinear Markov processes, we refer to \cite{35a} and \cite{37a}. 
	
	Since equations \eqref{e1.2} and \eqref{e1.13} are time-dependent due to  the dependence of the coefficients on $\nabla u(t,x)$, we present a definition of nonlinear Markov processes in the time-inhomogeneous case. In this regard, we also refer to Remark \ref{r3.2} (i).

	\begin{dfn}\label{d5.1}
		Let $\mathcal{P}_0 \subseteq \mathcal{P}\ (=\mathcal{P}(\rrd))$. A \textit{nonlinear Markov process} is a family $(P_{s,\zeta})_{(s,\zeta)\in[0,\9)\times \mathcal{P}_0}$, $P_{s,\zeta} \in \mathcal{P}(C([s,\infty),\R^d)$, such that
		\begin{enumerate}
			\item[(i)] $(\pi^s_t)_*P_{s,\zeta}  =: \mu^{s,\zeta}_t \in \mathcal{P}_0$ for all $0\leq s \leq t$ and $\zeta \in \mathcal{P}_0$.
			\item[(ii)] The \textit{nonlinear Markov property} holds: for all $0\leq s \leq r \leq t$, $\zeta \in \mathcal{P}_0$
			\begin{equation}\label{MP}
			P_{s,\zeta}(\pi^{s}_{t} \in A|\mathcal{F}_{s,r})(\cdot) = p_{(s,\zeta),(r,\pi^s_r(\cdot))}(\pi^r_t\in A) \quad P_{s,\zeta}-\text{a.s.} \text{ for all }A \in \mathcal{B}(\R^d),
			\end{equation}
			where $(p_{(s,\zeta),(r,z)})_{z\in \R^d}$ is a regular conditional probability kernel from $\R^d$ to $\mathcal{B}(C([r,\infty),\R^d)$ of ${P}_{r,\mu^{s,\zeta}_r}[\,\cdot\,| \pi^r_r{=}z],\, z \!\in\! \R^d$ (i.e. in particular $p_{(s,\zeta),(r,z)}\! \in\! \mathcal{P}(C([r,\infty),\R^d)$ and $p_{(s,\zeta),(r,z)}(\pi^r_r {=} z) = 1$),\smallskip\ and $\mathcal{F}_{s,r}$ is the $\sigma$-algebra generated by $\pi^s_u$, $s\le u\le r$.
		\end{enumerate}
	\end{dfn}

The case of a classical (we also say {\it linear}) time-inhomogeneous Markov process is contained in the previous definition. In this case, 
$$\calp_0=\calp,\ P_{s,\zeta}=\int_\rrd P_{s,y}\,d\zeta(y),\ P_{s,y}:=P_{s,\delta_y},$$
and $p_{(s,y)(r,z)}=P_{r,z},$ for all $0\le s\le r,\ z\in\rrd,\ \zeta\in\calp$. In this case, \eqref{MP} is the standard time-inhomogeneous  Markov property. Since in this linear case the map $\zeta\mapsto P_{s,\zeta}$ is linear on its domain $\calp$ for every $s\ge0$, the measures $(P_{s,y})_{s\ge0,y\in\rrd}$ determine the entire Markov process and, moreover, the {\it Chapman--Kolmogorov equations} for the one-dimensional time marginals $\mu^{s,\zeta}_t$ (see (i) of the previous definition) hold, i.e.
\begin{equation}
\label{e5.2'}
\mu^{s,y}_t=\int_\rrd \mu^{r,z}_t d\mu^{s,y}_r(z),\ \ff 0\le s\le r\le t,\ y\in\rrd.
\end{equation}
In contrast to this, in the general nonlinear case the map $\zeta\to P_{s,\zeta}$ is not linear on $\calp_0$, even if $\calp_0=\calp$ (which we do not assume). As a consequence, one loses the Chapman--Kolmogorov equations, but the one-dimensional time marginals still satisfy the {\it flow property} 
\begin{equation}
\label{e5.1'}
\mu^{s,\zeta}_t=\mu^{r,\mu^{s,\zeta}_r}_t,\  \ff 0\le s\le r\le t,\ \zeta\in\calp_0,\end{equation} (in the linear case, this follows from \eqref{e5.2'}). 

The following theorem from \cite{37a} is the key for our main result of this section,  Theorem \ref{t5.2} below.

\begin{theorem}{\rm\cite[Thms. 3.4+3.8]{37a}}\label{t52}
	Let $\Pscr_0\subseteq \Pscr$ be a class of initial conditions and \mbox{$(\mu^{s,\zeta}_t)_{0\leq s \leq t, \zeta \in \Pscr_0} \!\subseteq\! \Pscr_0$} such that
	\begin{enumerate}
		\item[\rm(i)] For all $(s,\zeta) \in [0,\infty)\times \Pscr_0$, $(\mu^{s,\zeta}_t)_{t\geq s}$ is a weakly continuous probability solution to the nonlinear FPE \eqref{e1.3} with initial condition $(s,\zeta)$ in the sense of Definition {\rm\ref{dD.2}}, satisfying the flow property \eqref{e5.1'}.
		\item[\rm(ii)] For all $(s,\zeta) \in [0,\infty) \times \Pscr_0$, $(\mu^{s,\zeta}_t)_{t\geq s}$ is the unique weakly continuous probability solution to the linear FPE \eqref{eD.1} with coefficients $(t,x)\mapsto a_{ij}(t,x,\mu^{s,\zeta}_t)$ and $(t,x)\mapsto b_i(t,x,\mu^{s,\zeta}_t)$ and initial condition $(s,\zeta)$ in the sense of Definition {\rm\ref{dD.1}} in the class
	\begin{equation}\label{e5.3a}
	\{(\nu_t)_{t\geq s} \subseteq\mathcal{P} : \nu_t \leq C \mu^{s,\zeta}_t, \quad t\geq s,\text{ for some }C>0\}\end{equation}
	("restricted linearized distributional uniqueness").
	\end{enumerate}
	Then, for each $(s,\zeta) \in [0,\infty)\times \Pscr_0$ there is a unique solution path law $P_{s,\zeta}$ to the McKean--Vlasov SDE \eqref{e1.5}--\eqref{e1.6} such that
	\begin{equation}\label{tg}
	(\pi^s_t)_* P_{s,\zeta} = \mu^{s,\zeta}_t, \quad \forall 0\leq s \leq t, \zeta \in \Pscr_0,
	\end{equation}
	and the family $(P_{s,\zeta})_{s\geq 0, \zeta \in \Pscr_0}$ is a nonlinear Markov process in the sense of Definition {\rm\ref{d5.1}}.
	
	In particular, this nonlinear Markov process is uniquely determined by its one-dimensional time marginals $(\mu^{s,\zeta}_t)_{0\leq s \leq t, \zeta \in \Pscr_0}$ and equations  \eqref{e1.5}, \eqref{e1.6}.
\end{theorem}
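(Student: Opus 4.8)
The plan is to verify the two hypotheses of the cited result \cite[Thms. 3.4+3.8]{37a} — but since the statement here *is* that result, the natural task is instead to reconstruct its proof in our setting. I would proceed as follows. First, for each fixed $(s,\zeta)$, freeze the measure argument in the coefficients along the given curve, i.e. set $\bar a_{ij}(t,x) := a_{ij}(t,x,\mu^{s,\zeta}_t)$ and $\bar b_i(t,x) := b_i(t,x,\mu^{s,\zeta}_t)$. By hypothesis (i), $(\mu^{s,\zeta}_t)_{t\ge s}$ is a weakly continuous probability solution of the resulting \emph{linear} FPE \eqref{eD.1}. Applying the superposition principle \cite[Theorem 2.5]{43a} (in conjunction with \cite[Proposition 2.2.3]{22'} and \cite[Proposition 4.11]{24'}, exactly as invoked before Theorem \ref{t3.3}), I obtain a probabilistically weak solution $X^{s,\zeta} = (X^{s,\zeta}(t))_{t\ge s}$ of the SDE with drift $\bar b$ and dispersion $\bar\sigma$, $\bar\sigma\bar\sigma^\top = \bar a$, whose one-dimensional marginals are $\mathcal{L}_{X^{s,\zeta}(t)} = \mu^{s,\zeta}_t$. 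Since the frozen coefficients agree with the McKean--Vlasov coefficients evaluated along $\mu^{s,\zeta}$, the process $X^{s,\zeta}$ solves \eqref{e1.5},\eqref{e1.6}, and I set $P_{s,\zeta} := (X^{s,\zeta})_*\mathbb{P}$; this gives existence of a solution path law satisfying \eqref{tg}.

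Next I would establish uniqueness of $P_{s,\zeta}$ among solution path laws with marginals $\mu^{s,\zeta}_t$. Any such path law is, by It\^o's formula, a solution of the martingale problem for the linear operator with coefficients $\bar a,\bar b$ started at $(s,\zeta)$; projecting to one-dimensional marginals yields a weakly continuous probability solution $(\nu_t)_{t\ge s}$ of the \emph{same} linear FPE. A priori $\nu_t = \mu^{s,\zeta}_t$ only if we already knew marginal uniqueness — but that is precisely where hypothesis (ii) enters: we are told the given curve is the unique solution in the comparison class \eqref{e5.3a}, and any solution path law with the prescribed marginals trivially has those marginals, so the relevant curve lies in the class (with $C=1$). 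Then the uniqueness part of the superposition principle, or rather the well-posedness of the linear martingale problem once marginal uniqueness is known (this is \cite[Proposition 2.2.3]{22'} / \cite[Proposition 4.11]{24'}), forces all such path laws to coincide with $P_{s,\zeta}$.

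Finally, for the nonlinear Markov property \eqref{MP}, I would disintegrate $P_{s,\zeta}$ over the value $\pi^s_r = z$ at the intermediate time $r$, obtaining a kernel $z \mapsto P_{s,\zeta}(\cdot \mid \pi^s_r = z)$. The key point is a concatenation argument: conditioning a solution of the linear martingale problem (coefficients frozen along $\mu^{s,\zeta}$) on its position at time $r$ produces, for $P_{s,\zeta}$-a.e.\ $z$, a solution of the martingale problem on $[r,\infty)$ with the \emph{same} frozen coefficients and initial point $\delta_z$. By the flow property \eqref{e5.1'}, $\mu^{r,\mu^{s,\zeta}_r}_t = \mu^{s,\zeta}_t$, so these frozen coefficients coincide with those determined by the curve $(\mu^{r,\mu^{s,\zeta}_r}_t)_{t\ge r}$; hence by the uniqueness just proved (applied at initial time $r$ with a Dirac initial marginal, after integrating against $\mu^{s,\zeta}_r$) the conditioned kernel must equal the regular conditional probability of $P_{r,\mu^{s,\zeta}_r}$ given $\pi^r_r = z$. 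This is exactly \eqref{MP}, and \eqref{tg} together with \eqref{e5.1'} gives clause (i) of Definition \ref{d5.1}. The main obstacle is the disintegration-and-concatenation step: one must check carefully that conditioning preserves the martingale-problem property (a standard but delicate measurability argument, using that the martingale problem is well-posed so that regular conditional probabilities can be chosen consistently), and that the initial-condition normalization $p_{(s,\zeta),(r,z)}(\pi^r_r = z) = 1$ holds for a.e.\ $z$. Everything else is bookkeeping around the superposition principle and the flow property.
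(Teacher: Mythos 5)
Your existence step is correct and matches the paper: freeze the measure argument along the given curve, apply the superposition principle of Trevisan, and observe that the resulting process solves the McKean--Vlasov SDE. But the uniqueness step and the Markov property step both contain genuine gaps, and in fact miss the key technical device on which the whole theorem hinges.

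For uniqueness you assert that once marginal uniqueness in the class \eqref{e5.3a} is known, ``the well-posedness of the linear martingale problem once marginal uniqueness is known (this is \cite[Proposition 2.2.3]{22'} / \cite[Proposition 4.11]{24'}) forces all such path laws to coincide.'' That inference is false in general, and those references do not support it: they are used in the paper only for the existence direction (constructing a process from a marginal curve), not for path-law uniqueness. Two path laws $P_1, P_2$ can solve the same linear martingale problem and have identical one-dimensional marginals $\mu^{s,\zeta}_t$ while still differing in their finite-dimensional distributions; marginal uniqueness only pins down the curve $t \mapsto \mathcal{L}_{X(t)}$, not the joint law. The paper instead proceeds by induction on the length $n$ of cylinder functions $H=\Pi_{i=1}^n h_i(\pi^s_{t_i})$: given $\mathbb{E}_{P_1}[H] = \mathbb{E}_{P_2}[H]$ for length $n$, one tilts $P_i$ by the bounded density $\vrho = \Pi_{i=1}^n h_i(\pi^s_{t_i}) / \mathbb{E}_{P_1}[\cdots]$, observes (via \cite[Lemma 2.6]{43a}) that the tilted marginal curves $\eta^i_t := (\pi^s_t)_*(\vrho P_i)$, $t\ge t_n$, are again weakly continuous probability solutions of the \emph{same} frozen linear FPE, and — crucially — that these curves lie in the comparison class \eqref{e5.3a} with base point $(t_n, \mu^{s,\zeta}_{t_n})$ precisely because $c\le\vrho\le C$. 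The restricted uniqueness (ii) and the flow property then give $\eta^1_t=\eta^2_t$ and the induction closes. The boundedness of the tilt is what places you inside the class \eqref{e5.3a}; this is the whole reason hypothesis (ii) only needs to hold in that restricted class. Your argument never invokes \eqref{e5.3a} in a substantive way (it invokes it with $C=1$ for the original curve, which is vacuous), so it does not actually use the hypothesis and cannot be correct.

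The Markov property step has the analogous defect. You propose to condition $P_{s,\zeta}$ on $\pi^s_r = z$ and apply uniqueness ``at initial time $r$ with a Dirac initial marginal.'' But hypothesis (ii) gives uniqueness only in the class \eqref{e5.3a}, i.e.\ for curves dominated by a constant multiple of $\mu^{r,\mu^{s,\zeta}_r}_t$; the conditional kernel started from a single point $z$ does not lie in this class (its initial marginal is $\delta_z$, which is singular with respect to $\mu^{s,\zeta}_r$ in general, and its later marginals need not be dominated either). The paper avoids this exactly as in the uniqueness proof: rather than conditioning on Dirac points, one compares $Q^\theta$ (the tilted-and-shifted path law) with $Q_g$ (the $g$-weighted mixture of the conditional kernels of $P_{r,\mu^{s,\zeta}_r}$), where both $g$ and $\theta$ are bounded between positive constants. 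Then both curves of projected marginals lie in \eqref{e5.3a} with base $(r,\mu^{s,\zeta}_r)$, hypothesis (ii) applies, and \eqref{MP} follows by disintegration. In short, the ``bookkeeping'' you dismiss — bounded tilting so as to remain inside the restricted comparison class — is the proof; without it the restricted uniqueness hypothesis cannot be brought to bear, and no general martingale-problem argument can substitute, because well-posedness from arbitrary initial points is precisely what is not assumed.
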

More precisely, in the present work we will not apply the previous theorem, but the following corollary. The idea behind it is that the statement of the previous theorem remains true if the restricted linearized distributional uniqueness condition is known for fewer initial data, at the price of a uniqueness assertion for the corresponding solution path laws, which holds for fewer initial data as well.
\begin{kor} {\rm\cite[Corollary 3.10]{37a}} \label{c5.3}
	Let $\mathfrak{P}_0 \subseteq \Pscr_0 \subseteq \Pscr$ and suppose $(\mu^{s,\zeta}_t)_{0\leq s \leq t, \zeta \in \Pscr_0}$ satisfies {\rm(i)} from the previous theorem, but {\rm(ii)} only for $(s,\zeta) \in [0,\infty)\times \mathfrak{P}_0$. Further, assume $\mu^{s,\zeta}_t \in \mathfrak{P}_0$ for all triples $(s,t,\zeta)$ such that either $s \leq t$, $\zeta \in \mathfrak{P}_0$ or $s < t$, $\zeta \in \Pscr_0$. Then, there is a nonlinear Markov process $(P_{s,\zeta})_{s\geq 0, \zeta \in \Pscr_0} $ with \eqref{tg}, consisting of solution path laws to \eqref{e1.5}, \eqref{e1.6}. The uniqueness-assertion for $P_{s,\zeta}$ of Theorem {\rm\ref{t52}} remains true for initial data $(s,\zeta)$ with $\zeta\in\mathfrak{P}_0.$
\end{kor}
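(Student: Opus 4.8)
The plan is to derive Corollary \ref{c5.3} from Theorem \ref{t52} by a careful bookkeeping of which initial data require which hypothesis, essentially re-running the proof of the theorem but keeping track of the fact that the restricted linearized distributional uniqueness (ii) is only assumed on the smaller class $\mathfrak{P}_0$. The point is that in the construction of $P_{s,\zeta}$, uniqueness of the one-dimensional marginal flow is only ever invoked \emph{along} the given curve $(\mu^{s,\zeta}_t)_{t\ge s}$, and for the restarted curves $(\mu^{r,\mu^{s,\zeta}_r}_t)_{t\ge r}$ that appear when verifying the nonlinear Markov property; by the flow property \eqref{e5.1'} these restarted curves coincide with sub-curves of the original one, so the only initial data at which uniqueness is genuinely needed are $(r,\mu^{s,\zeta}_r)$ with $r\ge s$. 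The extra hypothesis $\mu^{s,\zeta}_t\in\mathfrak{P}_0$ for $s<t$, $\zeta\in\Pscr_0$ (and for $s\le t$, $\zeta\in\mathfrak{P}_0$) is precisely what guarantees that all these restart data lie in $\mathfrak{P}_0$, where (ii) is available.

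First I would recall, from the proof of Theorem \ref{t52} in \cite{37a}, the two building blocks: the superposition principle (Trevisan's result together with the cited regular-conditional-probability statements) produces, for each $(s,\zeta)$, \emph{some} solution path law $P_{s,\zeta}$ to \eqref{e1.5},\eqref{e1.6} with $(\pi^s_t)_*P_{s,\zeta}=\mu^{s,\zeta}_t$, using only (i); and the linearized uniqueness (ii) is what makes such $P_{s,\zeta}$ \emph{unique} and, simultaneously, forces the disintegration of $P_{s,\zeta}$ along $\pi^s_r$ to agree $\mu^{s,\zeta}_r$-a.e.\ with the kernels $p_{(s,\zeta),(r,z)}$ built from $P_{r,\mu^{s,\zeta}_r}$, which is the content of the nonlinear Markov property \eqref{MP}. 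So the plan is: (1) for every $(s,\zeta)\in[0,\infty)\times\Pscr_0$ invoke the superposition principle under hypothesis (i) alone to obtain a solution path law $P_{s,\zeta}$ with the correct marginals \eqref{tg} --- here no uniqueness is claimed yet; (2) observe that by the flow property, for $s\le r\le t$ one has $\mu^{r,\mu^{s,\zeta}_r}_t=\mu^{s,\zeta}_t$, and that $\mu^{s,\zeta}_r\in\mathfrak{P}_0$ by the standing assumption (using $s<r$ if $\zeta\in\Pscr_0\setminus\mathfrak{P}_0$, and $s\le r$ if $\zeta\in\mathfrak{P}_0$); (3) at the data $(r,\mu^{s,\zeta}_r)\in[0,\infty)\times\mathfrak{P}_0$ hypothesis (ii) \emph{is} available, so the marginal curve $(\mu^{r,\mu^{s,\zeta}_r}_t)_{t\ge r}$ is the unique solution in the class \eqref{e5.3a} and hence, as in the proof of Theorem \ref{t52}, $P_{r,\mu^{s,\zeta}_r}$ is the unique solution path law with those marginals and its conditional kernels $p_{(r,\mu^{s,\zeta}_r),(t',z)}$ are well-defined; (4) feed this into the disintegration argument of \cite{37a} to verify \eqref{MP} for the family $(P_{s,\zeta})_{s\ge0,\zeta\in\Pscr_0}$, concluding it is a nonlinear Markov process; (5) finally, for $\zeta\in\mathfrak{P}_0$, hypothesis (ii) applies directly at $(s,\zeta)$, giving the restricted uniqueness assertion for $P_{s,\zeta}$.

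The step I expect to be the main obstacle is step (4), the verification of \eqref{MP}: one must check that restricting the conditioning only to restart points with $\zeta\in\mathfrak{P}_0$ (rather than all of $\Pscr_0$) does not break the argument, i.e.\ that the regular conditional probabilities $P_{s,\zeta}(\,\cdot\,|\mathcal{F}_{s,r})$ can still be identified $P_{s,\zeta}$-a.s.\ with $p_{(s,\zeta),(r,\pi^s_r(\cdot))}$ even though $\pi^s_r(\cdot)$ under $P_{s,\zeta}$ ranges over all of $\R^d$ while the measure-level restart $\mu^{s,\zeta}_r$ lies in $\mathfrak{P}_0$. The resolution is exactly the asymmetry built into the hypotheses: the measure-valued restart $\mu^{s,\zeta}_r$ is what must lie in $\mathfrak{P}_0$ (and it does, by assumption, with the $s<t$ refinement handling the case $\zeta\notin\mathfrak{P}_0$), whereas the point-valued disintegration $z\mapsto p_{(s,\zeta),(r,z)}$ is built from the already-constructed path law $P_{r,\mu^{s,\zeta}_r}$ and needs no further uniqueness input. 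Once this is seen, the remaining computations are a routine rerun of \cite[proof of Thms. 3.4+3.8]{37a}, and in fact I would simply cite that proof, indicating the two places where "$\Pscr_0$" is to be replaced by "$\mathfrak{P}_0$" (namely, where uniqueness in the class \eqref{e5.3a} is invoked, and where the uniqueness of $P_{s,\zeta}$ is concluded), and verify that the standing assumption $\mu^{s,\zeta}_t\in\mathfrak{P}_0$ covers every such invocation.
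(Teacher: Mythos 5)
Your proposal is correct and follows the route the paper intends via its citation of \cite[Corollary 3.10]{37a}: re-run the proof of Theorem~\ref{t52}, keeping track that the restricted linearized uniqueness (ii) is invoked only at restart data $(r,\mu^{s,\zeta}_r)$, which the standing hypothesis places in $\mathfrak{P}_0$, while existence of each $P_{s,\zeta}$ comes from the superposition principle alone. The one case you leave implicit is $r=s$ with $\zeta\in\Pscr_0\setminus\mathfrak{P}_0$, where $\mu^{s,\zeta}_s=\zeta\notin\mathfrak{P}_0$; but there \eqref{MP} reduces to the tautology that $P_{s,\zeta}(\pi^s_t\in A\mid\mathcal F_{s,s})$ is a version of the regular conditional probability given $\pi^s_s$, so (ii) is not needed and your bookkeeping is complete.
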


For the proof of Theorem \ref{t5.2}  below, we apply Corollary \ref{c5.3} to the nonlinear FPE \eqref{e1.13} and the corresponding McKean--Vlasov SDE \eqref{e1.15}, \eqref{e1.16}. Before we do this, let us describe the main idea of the proof of the general Theorem \ref{t52} from above.

\bigskip\noindent{\it Idea of proof of Theorem {\rm\ref{t52}}.} 
For the uniqueness assertion, let $(s,\zeta)\in [0,\infty)\times \Pscr_0$ and $P_1,P_2$ be two solution path laws to \eqref{e1.5}, \eqref{e1.6} with
\begin{equation}\label{H0}
	(\pi^s_t)_* P_i = \mu^{s,\zeta}_t, \quad \forall t \geq s, i \in \{1,2\}.
\end{equation}
It suffices to prove 
\begin{equation}\label{H1}
	\mathbb{E}_{P_1}[H] = \mathbb{E}_{P_2}[H]
\end{equation}
for all $H = \Pi_{i=1}^n h_i(\pi^s_{t_i})$, $n \in \N$, $s\leq t_1 < \dots < t_n$, $h_i: \R^d \to \R$ measurable, $c_i \leq h_i \leq C_i$ for some $0< c_i < C_i$ (indeed, the set of such $H$ is closed under pointwise multiplication and generates the Borel $\sigma$-algebra of $C([s,\infty),\R^d)$ with respect to the topology of locally uniform convergence). We call $n$ the \emph{length} of $H$. By \eqref{H0}, \eqref{H1} holds for all $H$ of length $1$, and we proceed via induction over $n$. Assuming \eqref{H1} is valid for all $H$ of length $n$, consider an arbitrary $H = \Pi_{i=1}^{n+1} h_i(\pi^s_{t_i})$ as above. By  \cite[Lemma 2.6]{43a}, the curves 
$$\eta^i_t := (\pi^s_{t_n})_*(\vrho P_i),\quad t \geq t_n,$$
where $\vrho$, defined via
$$\vrho := \frac{\Pi_{i=1}^nh_i(\pi^s_{t_i})}{\mathbb{E}_{P^1}[\Pi_{i=1}^nh_i(\pi^s_{t_i})] },$$
is a probability density with respect to $P_i$, $i \in \{1,2\}$, such that there is $0<c<C$ with \mbox{$c\leq \vrho \leq C$,} are weakly continuous probability solutions to the linear FPE with coefficients $a_{ij}(t,x,\mu^{s,\zeta}_t)$ and $b_i(t,x,\mu^{s,\zeta}_t)$ from the initial condition $(t_n, \eta)$ in the sense of Definition \ref{dD.1}, where 
$$\eta := (\pi^s_{t_n})_* (\vrho P_1) = (\pi^s_{t_n})_* (\vrho P_2) \in \Pscr.$$
Indeed, the latter equality follows from  the induction assumption. From the definition of $\vrho$ it follows that $(\eta^i_t)_{t\geq t_n}$, $i\in \{1,2\}$, belong to the set \eqref{e5.3a}, with $s$ and $\zeta$ replaced by $t_n$ and $ \mu^{s,\zeta}_{t_n}$, respectively. Moreover, the definition of $\vrho$ entails that $\eta$ is equivalent to $\mu^{s,\zeta}_{t_n}$ with a density bounded between two strictly positive constants. Due to the flow property, we have $\mu^{t_n,\mu^{s,\zeta}_{t_n}}_t = \mu^{s,\zeta}_t$, $t\ge t_n$, and so assumption (ii) and \cite[Lemma 3.7(ii)]{37a} imply $\eta^1_t = \eta^2_t$,  $t\ge t_n$, i.e. in particular $\eta^1_{t_{n+1}} = \eta^2_{t_{n+1}}$. Thus, we infer
\begin{align*}
	\frac{\mathbb{E}_{P^i}\big[\Pi_{i=1}^{n+1}h_i(\pi^s_{t_i})\big]}{\mathbb{E}_{P^1}\big[\Pi_{i=1}^{n}h_i(\pi^s_{t_i})\big]}
	= \int_{C([s,\infty),\R^d)} \vrho(w)&h_{n+1}(\pi^s_{t_{n+1}}(w)) \,dP^i(w)=\int_{\R^d} h_{n+1}(x)\,d\eta^i_{t_{n+1}}(x) ,
\end{align*}
which together with the induction assumption concludes the proof of the uniqueness assertion.

Regarding the nonlinear Markov property, for $0\leq s \leq r \leq t$ and $\zeta \in \Pscr_0$, it suffices to prove for all $A\in \Bscr(\R^d)$, $n \in \N$, $s\leq t_1<\dots < t_n\leq r$, and $h: (\R^d)^n \to \R$ measurable with $c\leq h \leq C$ for some $0<c<C$
\begin{align}\label{tgz}
\mathbb{E}_{s,\zeta}\big[h(\pi^s_{t_1},\dots,\pi^s_{t_n})\mathds{1}_{\pi^s_t \in A}\big] = \int_{C([s,\9),\rrd)}
p_{(s,\zeta),(r,\pi^s_r(\omega))}(\pi^r_t\in A)h(\pi^s_{t_1}(\omega),\dots,\pi^s_{t_n}(\omega)) \,dP_{s,\zeta}(\omega)
\end{align}
(where $\mathbb{E}_{s,\zeta}$ denotes expectation with respect to $P_{s,\zeta}$ and $p_{(s,\zeta),(r,z)}$, $z\in \R^d$, is as in Definition \ref{d5.1}, and to apply a monotone class argument. To prove \eqref{tgz}, define the probability measures $Q_g$ and $Q^\theta$ on $C([r,\infty),\R^d)$ as follows. First, define
\begin{equation*}
Q_g := \int_{\R^d} p_{(s,\zeta),(r,z)}g(z)\,d\mu^{s,\zeta}_r(z),
\end{equation*}
i.e.
$$Q_g(C) = \int_{\R^d} p_{(s,\zeta),(r,z)}(C) g(z)\, d\mu^{s,\zeta}_r(z),\quad \forall C \in \Bscr(C([r,\infty),\R^d)),$$
where $g = c_0\tilde{g}$, with $\tilde{g}:\R^d \to \R$ the $\mu^{s,\zeta}_r$-a.s. unique map such that
$$\mathbb{E}_{s,\zeta}\big[h(\pi^s_{t_1},\dots,\pi^s_{t_n})|\pi^s_r\big]= \tilde{g}(\pi^s_r)\quad \mathbb{P}_{s,\zeta}-a.s.$$ obtained from the factorization lemma 
and $c_0$ a normalizing constant such that $\int_{\R^d} g\,d\mu^{s,\zeta}_r = 1$.
Second, set
$$Q^\theta := (\Lambda^s_r)_*(\theta P_{s,\zeta}),$$
where $\Lambda^s_r := C([s,\infty),\R^d) \to C([r,\infty),\R^d)$, $\Lambda^s_r(\omega) := \omega_{|[r,\infty)}$ is the projection of a path $\omega = (\omega(t))_{t\geq s}$ to $(\omega(t))_{t\geq r}$,
and
$$\theta: C([s,\infty),\R^d) \to \R, \quad \theta := c_0 h(\pi^s_{t_1},\dots, \pi^s_{t_n})$$
with $c_0$ as above. By  \cite[Lemma 2.8]{43a} and \cite[Proposition 2.6]{43a}, $t\mapsto (\pi^r_t)_*Q_g$ and $t\mapsto (\pi^r_t)_*Q^\theta$, respectively, are weakly continuous probability solutions to the linear FPE with coefficients $(t,x)\mapsto a_{ij}(t,x,\mu^{s,\zeta}_t)$ and $(t,x)\mapsto b_i(t,x,\mu^{s,\zeta}_t)$ in the sense of Definition \ref{dD.1}, and their common initial condition is $(r,g \mu^{s,\zeta}_r)$. Since $g$ and $\theta$ are bounded between two strictly positive constants, both of the above $\Pscr$-valued curves belong to the set \eqref{e5.3a}, with $(s,\zeta)$ replaced by $(r,\mu^{s,\zeta}_r)$. Using the uniqueness assumption from (ii) for the initial condition $(r,\mu^{s,\zeta}_r)$ instead of $(s,\zeta)$ as well as \cite[Lemma 3.7 (ii)]{37a}, we obtain
$$(\pi^r_t)_*Q_g = (\pi^r_t)_*Q^\theta, \quad \forall t \geq r.$$
We note that the flow property of $(\mu^{s,\zeta}_t)_{s\leq t, \zeta \in \Pscr_0}$ was implicitly used several times in the above arguments.
Now we conclude \eqref{tgz} via the definitions of $Q^\theta$ and $Q_g$ by noting
\begin{align*}
\mathbb{E}_{s,\zeta}\big[h(\pi^s_{t_1},\dots,\pi^s_{t_n})\mathds{1}_{\pi^s_t \in A}\big] &= c_0^{-1}Q^\theta\circ (\pi^r_t)^{-1}(A) = c_0^{-1}Q_g\circ (\pi^r_t)^{-1}(A) \\&= c_0^{-1} \int _{{C([s,\infty),\R^d)}}p_{(s,\zeta),(r,\pi^s_r(\omega))}(\pi^r_t \in A)g(\pi^s_r(\omega))\,dP_{s,\zeta}(\omega) \\&= \int_{C([s,\infty),\R^d)}p_{(s,\zeta),(r,\pi^s_r(\omega))}(\pi^r_t\in A)h(\pi^s_{t_1}(\omega),\dots,\pi^s_{t_n}(\omega)) \,dP_{s,\zeta}(\omega).
\end{align*}

	
\subsection{$p$-Brownian motion}\label{s5.3}
Now, in analogy to the case $p=2$ presented in Section \ref{s5.1}, we implement Key Step 3 as follows: We consider the solutions $X^y=(X^y(t))_{t\ge0},$ $y\in\rrd$, to \eqref{e1.15}, \eqref{e1.16} constructed in Theorem \ref{t4.2}, more precisely their path laws, and prove that these laws are uniquely determined by $w^y(t,x),$ $y\in\rrd$, $t>0$, and \eqref{e1.15}, \eqref{e1.16}, as well as that they satisfy the nonlinear Markov property. To do so, we apply Corollary \ref{c5.3} to   \eqref{e1.13}, \eqref{e1.15}, \eqref{e1.16} and $w^y$, $y\in\rrd$, as follows.    Set	
$$\mathcal{P}_0 := \{w^y(\delta,x)dx,\  y\in \R^d,\ \delta \geq 0 \} \subseteq \mathcal{P},$$	with the convention $w^y(0,x)dx = \delta_y$. We note that  for each $\zeta \in \mathcal{P}_0$, the pair  $(\delta,y)\in [0,\infty)\times \R^d$ for the representation   $\zeta = w^y(\delta,x)dx$ is unique. 
	
	\begin{theorem}\label{t5.2} Let $d\ge2,\ p>2$.
		\begin{enumerate}
			\item [\rm(i)] Let $(s,\zeta) \in [0,\infty)\times \mathcal{P}_0$, $\zeta = w^y(\delta,x)dx$. The set of solution path laws to the McKean--Vlasov SDE  \eqref{e1.15}--\eqref{e1.16} with one-dimensional time marginals $w^y(\delta+t-s,x)dx$, $t \geq s$, and initial condition $(s,\zeta)$ contains exactly one element $P_{s,\zeta}$. The family $(P_{s,\zeta})_{s\geq 0, \zeta \in \mathcal{P}_0}$ is a nonlinear Markov process in the sense of Definition {\rm \ref{d5.1}}.   In particular, this nonlinear Markov process is uniquely determined by \eqref{e1.15}--\eqref{e1.16} and $\omega^y(t)$, $y\in\rrd$, $t\ge0$.
			\item[\rm(ii)] $(P_{s,\zeta})_{s \geq 0, \zeta \in \mathcal{P}_0}$ is time-homogeneous, i.e. $P_{s,\zeta} =(\hat{\Pi}_s^{-1})_*		
			 P_{0,\zeta}$ for all $(s,\zeta)\in[0,\9)\times \mathcal{P}_0$, where 
		\begin{equation}\label{e5.8'}
		\hat{\Pi}_s: C([0,\infty),\R^d) \to C([s,\infty),\R^d),\quad \hat{\Pi}_s: (\omega(t))_{t\geq 0} \mapsto (\omega(t-s))_{t\geq s}.\end{equation} Moreover, for $\zeta = w^y(\delta,x)dx$, we have $P_{0,\zeta} = ({\Pi}_{\delta}^{0})_*P_{0,y}$ $($the map $ \Pi^0_{\delta}$ is defined in \eqref{e5.3} below$)$.
		\end{enumerate}
	\end{theorem}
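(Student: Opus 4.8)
The plan is to derive Theorem~\ref{t5.2} from Corollary~\ref{c5.3}, applied to the nonlinear FPE~\eqref{e1.13}, the McKean--Vlasov SDE~\eqref{e1.15},\eqref{e1.16}, the class $\mathcal{P}_0$ introduced above, and the subclass $\mathfrak{P}_0 := \{w^y(\delta,x)\,dx : y \in \rrd,\ \delta > 0\} \subseteq \mathcal{P}_0$. For $\zeta = w^y(\delta,x)\,dx \in \mathcal{P}_0$ set $\mu^{s,\zeta}_t := w^y(\delta + t - s, x)\,dx$ for $t \ge s$; this is unambiguous since the representing pair $(\delta,y)$ of $\zeta$ is unique. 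First I would check the hypotheses of Corollary~\ref{c5.3}. That $(\mu^{s,\zeta}_t)_{t\ge s}$ is a weakly continuous probability solution to~\eqref{e1.13} with initial condition $(s,\zeta)$ in the sense of Definition~\ref{d2.3} follows from Proposition~\ref{p4.1}(i), Remark~\ref{r2.6}, Lemma~\ref{l2.5} and the integrability bounds established in the proof of Theorem~\ref{t4.2}, using that a time shift $w^y(\delta+\cdot-s,\cdot)$ of a fundamental solution is again such a solution (and for $\delta>0$ has full parabolic regularity, so the integrability requirements hold a fortiori). The flow property~\eqref{e5.1'} is immediate: if $\zeta = w^y(\delta,x)\,dx$, then $\mu^{s,\zeta}_r = w^y(\delta+r-s,x)\,dx$ has representing pair $(\delta+r-s,y)$, whence $\mu^{r,\mu^{s,\zeta}_r}_t = w^y((\delta+r-s)+(t-r),x)\,dx = \mu^{s,\zeta}_t$. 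The inclusion requirement of Corollary~\ref{c5.3}, namely $\mu^{s,\zeta}_t\in\mathfrak{P}_0$ whenever $s\le t,\ \zeta\in\mathfrak{P}_0$ or $s<t,\ \zeta\in\mathcal{P}_0$, is just the trivial inequality $\delta+t-s>0$. Finally, condition~(ii) of Theorem~\ref{t52} for $(s,\zeta)\in[0,\infty)\times\mathfrak{P}_0$ --- the restricted linearized distributional uniqueness in the class~\eqref{e5.3a} --- is precisely Theorem~\ref{t5.10}, proved in Section~\ref{s6a}. This is the one substantial input and the real obstacle of the whole argument; everything else is bookkeeping. Granting it, Corollary~\ref{c5.3} produces a nonlinear Markov process $(P_{s,\zeta})_{s\ge0,\zeta\in\mathcal{P}_0}$ of solution path laws to~\eqref{e1.15},\eqref{e1.16} with $(\pi^s_t)_*P_{s,\zeta} = \mu^{s,\zeta}_t = w^y(\delta+t-s,x)\,dx$, and uniqueness of $P_{s,\zeta}$ among such path laws for $\zeta\in\mathfrak{P}_0$; existence of at least one such path law for every $\zeta\in\mathcal{P}_0$, including $\delta=0$, is already furnished by Corollary~\ref{c4.3}.

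To finish part~(i) it remains to prove uniqueness for the Dirac data $\zeta=\delta_y$ as well. Let $P$ be any solution path law to~\eqref{e1.15},\eqref{e1.16} with initial condition $(s,\delta_y)$ and marginals $w^y(t-s,x)\,dx$, $t\ge s$. For fixed $t_0>s$, restricting an SDE solution to $[t_0,\infty)$ again yields an SDE solution, so the image $(\Lambda^s_{t_0})_*P$ under the restriction map $\Lambda^s_{t_0}:C([s,\infty),\rrd)\to C([t_0,\infty),\rrd)$ is a solution path law to~\eqref{e1.15},\eqref{e1.16} on $[t_0,\infty)$ whose marginal at $t\ge t_0$ is $w^y(t-s,x)\,dx = w^y((t_0-s)+(t-t_0),x)\,dx$, hence one with initial datum $(t_0,w^y(t_0-s,x)\,dx)\in[0,\infty)\times\mathfrak{P}_0$ and exactly the marginals prescribed for that datum; by the $\mathfrak{P}_0$-uniqueness above, $(\Lambda^s_{t_0})_*P = P_{t_0,w^y(t_0-s,x)dx}$. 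On the other hand, the nonlinear Markov property together with the flow property gives $(\Lambda^s_{t_0})_*P_{s,\delta_y} = \int_{\rrd} p_{(s,\delta_y),(t_0,z)}\,d\mu^{s,\delta_y}_{t_0}(z) = P_{t_0,w^y(t_0-s,x)dx}$. Thus $(\Lambda^s_{t_0})_*P = (\Lambda^s_{t_0})_*P_{s,\delta_y}$ for every $t_0>s$. Since $\pi^s_s = y$ a.s. under both measures, and since for $s\le t_1<\dots<t_n$ with $t_1>s$ the joint law of $(\pi^s_{t_1},\dots,\pi^s_{t_n}) = (\pi^{t_1}_{t_1},\dots,\pi^{t_1}_{t_n})\circ\Lambda^s_{t_1}$ is determined by $(\Lambda^s_{t_1})_*$ of the path law, all finite-dimensional distributions of $P$ and $P_{s,\delta_y}$ agree, so $P = P_{s,\delta_y}$. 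The ``in particular'' in~(i) then follows, because the marginals $\mu^{s,\zeta}_t$ are read off from $w^y$ and $P_{s,\zeta}$ is the unique solution path law of~\eqref{e1.15},\eqref{e1.16} carrying them.

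For part~(ii) I would again use uniqueness, now against the shift-covariance of the explicit coefficients. Along the curve $\mu^{s,\zeta}_t = w^y(\delta+t-s,x)\,dx$, the drift $\nabla(|\nabla w^y(\delta+t-s,x)|^{p-2})$ and the diffusion coefficient $|\nabla w^y(\delta+t-s,x)|^{(p-2)/2}$ depend on $(s,t)$ only through $t-s$, so as functions of $(t,x)$ they are the time-$s$ shifts of the coefficients attached to the datum $(0,\zeta)$. Hence, if $X$ has path law $P_{0,\zeta}$, the shifted process $\tilde X(t):=X(t-s)$, $t\ge s$, with the correspondingly shifted Brownian motion and filtration solves~\eqref{e1.15},\eqref{e1.16} on $[s,\infty)$ with initial law $\zeta$ and marginals $w^y(\delta+t-s,x)\,dx$; its path law is $(\hat\Pi_s)_*P_{0,\zeta}$, which by the uniqueness in~(i) equals $P_{s,\zeta}$. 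Similarly, for $\zeta = w^y(\delta,x)\,dx$, reindexing a solution with path law $P_{0,y}$ from time $\delta$ onward (the map $\Pi^0_\delta$ of~\eqref{e5.3}) gives a solution on $[0,\infty)$ with initial law $w^y(\delta,x)\,dx = \zeta$ and marginals $w^y(\delta+t,x)\,dx = \mu^{0,\zeta}_t$, so $(\Pi^0_\delta)_*P_{0,y} = P_{0,\zeta}$ by the same uniqueness. In summary, modulo the genuinely hard Theorem~\ref{t5.10}, the proof is an assembly of Corollary~\ref{c5.3} with the flow/Markov structure and the time-and-space shift invariance of the Barenblatt coefficients.
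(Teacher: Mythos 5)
Your proposal is correct and takes essentially the same route as the paper: both arguments apply Corollary~\ref{c5.3} with $\mathfrak{P}_0 = \{w^y(\delta,x)dx : \delta>0\}$, feed in Theorem~\ref{t5.10} for the restricted linearized uniqueness, and then dispose of the remaining Dirac initial data by a time-shift reduction to the $\mathfrak{P}_0$-case, while part~(ii) in both cases is the uniqueness from~(i) played against the translation-invariance of the Barenblatt coefficients.

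Two small points of comparison. First, in the Dirac-uniqueness step you push forward by the \emph{restriction} map $\Lambda^s_{t_0}:C([s,\infty),\rrd)\to C([t_0,\infty),\rrd)$, whereas the paper uses the \emph{shift} map $\Pi^s_r:\omega(\cdot)\mapsto\omega(\cdot+r)$ landing back in $C([s,\infty),\rrd)$; these differ only by a relabelling of the time axis and both yield the same reduction to an initial datum $w^y(t_0-s,x)dx\in\mathfrak{P}_0$. Second, and this is the one place you should tighten, you write $(\Lambda^s_{t_0})_*P_{s,\delta_y}=\int p_{(s,\delta_y),(t_0,z)}\,d\mu^{s,\delta_y}_{t_0}(z)$ ``by the nonlinear Markov property''. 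But Definition~\ref{d5.1} formulates~\eqref{MP} only for one-dimensional marginal events $\{\pi^s_t\in A\}$, $A\in\Bscr(\rrd)$; upgrading it to an identity of measures on $C([t_0,\infty),\rrd)$ needs an extra monotone-class argument that you have not supplied, and there is a slight circularity flavour in invoking the Markov property you are in the process of establishing via Corollary~\ref{c5.3}. The detour is unnecessary: $P_{s,\delta_y}$ is \emph{itself} a solution path law with the prescribed marginals (this is property (II) that Corollary~\ref{c5.3} delivers), so $(\Lambda^s_{t_0})_*P_{s,\delta_y}$ is a solution path law with initial datum $(t_0,w^y(t_0-s,x)dx)\in[0,\infty)\times\mathfrak{P}_0$, and $\mathfrak{P}_0$-uniqueness pins it down to $P_{t_0,w^y(t_0-s,x)dx}$ directly, exactly as you argued for the arbitrary $P$. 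With that replacement the argument is airtight, and it is then the paper's argument modulo the choice of $\Lambda$ versus $\Pi$.
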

	
\begin{rem}\label{r5.5} For $\zeta=w^y(\delta,x)dx$, $P_{s,\zeta}$ from the previous theorem is the path law of the solution $X^{s,(\delta,y)}$ from Corollary \ref{c4.3}, i.e. when the latter is defined on the probability space $(\Omega,\mathcal{F},\mathbb{P})$, then
		$$P_{s,\zeta}=(X^{s,(\delta,y)})_*\mathbb{P}.$$
		\end{rem}

		To avoid confusion, we remark that in the following proof the times $s$ and $\delta$ are not related. In particular, for the initial condition  $\zeta = w^y(\delta,x)dx$, we do not only consider the initial pair $(\delta,\zeta)$, but necessarily \emph{any} $(s,\zeta)$, $s \geq 0$. 	Set 
		$$\mathfrak{P}_0 := \{w^y(\delta,x)dx, y \in \R^d, \delta>0\}=\calp_0\setminus\{\delta_y,\ y\in\rrd\}.$$ 	
		In the following proof, a crucial ingredient is Theorem \ref{t5.10}, which is formulated and proven in Section~\ref{s6a}.
	
	\medskip \noindent{\it Proof of Theorem}   \ref{t5.2}.\
		\begin{enumerate}
			\item [(i)] Setting, for $\zeta = w^y(\delta,x)dx$, $\delta\geq 0$, $y\in\rrd$,
			$$\mu^{s,\zeta}_t := w^y(\delta+t-s,x)dx, \quad t \geq s,$$
			it is straightforward to check that the family of probability measures $\{\mu^{s,\zeta}_t\}_{s\geq 0, t\geq s, \zeta \in \mathcal{P}_0}$ has the {flow property} \eqref{e5.1'}.    
	By Lemma \ref{l2.5}, Proposition \ref{p4.1} and \eqref{e4.1'},  $(\mu^{s,\zeta}_t)_{t\ge s}$ is a weakly continuous probability solution to the nonlinear FPE \eqref{e1.13} for each $(s,\zeta)\in[0,\9)\times\mathcal{P}_0$ in the sense of Definition \ref{dD.2}. Moreover, by Theorem \ref{t5.10} below, condition (ii) of Theorem \ref{t52} holds for all $(\mu^{s,\zeta}_t)_{t\ge s}$ with $(s,\zeta)\in[0,\9)\times\mathfrak{P_0}$. Since also the final condition of Corollary \ref{c5.3} is satisfied, we may apply Corollary \ref{c5.3} to obtain:				
			
	There is a nonlinear Markov process $(P_{s,\zeta})_{(s,\zeta)\in [0,\9)\times \mathcal{P}_0}$, $P_{s,\zeta} \in \mathcal{P}(C([s,\infty),\R^d)$, such that
\begin{enumerate}
\item [(I)] $(\pi^s_t)_*P_{s,\zeta} = \mu^{s,\zeta}_t, t\geq s$;
\item [(II)] $P_{s,\zeta}$ is a solution path law to the McKean--Vlasov SDE \eqref{e1.15}--\eqref{e1.16} on $[s,\9)$;
\item [(III)] For $s\geq 0$ and $\zeta \in \mathfrak{P}_0$, $P_{s,\zeta}$ is the \emph{unique} element from $\mathcal{P}(C([s,\infty),\R^d)$ with properties (I)--(II).
\end{enumerate}
Therefore, since $\mathcal{P}_0 \backslash \mathfrak{P}_0 = \{\delta_y, y \in \R^d\}$, it remains to prove the following claim.
			
\medskip\noindent{\bf Claim.} For $(s,y)\in [0,\infty)\times \R^d$, there is a \emph{unique} path law $P_{s,\delta_y}$ with properties (I)--(II).
			
\textit{Proof of Claim.} Let $P^1,P^2$ have properties (I)--(II) for $s\geq 0, \zeta = \delta_y$, $y \in \R^d$. We define for $ \omega = \omega(t)_{t\geq s}$ the map $\Pi^s_r : C([s,\infty),\R^d) \to C([s,\infty),\R^d)$ via
\begin{equation}\label{e5.3}
\Pi^s_r: \omega(t)_{t\geq s} \mapsto \omega(t+r)_{t\geq s}.
\end{equation}			
For any $r >0$, $i \in \{1,2\}$, we have
\begin{equation}\label{e5.9'}
(\pi^s_t)_*((\Pi^s_r)_*P^i)=(\pi^s_{t+r})_*P^i = \mu^{s,\delta_y}_{t+r} = w^y(t+r-s,x)dx,\quad \forall t \geq s,\end{equation}
where the second equality is due to (I). 
It is straightforward to check that $(\Pi^s_r)_*P^i  \!\in\! \mathcal{P}(C([s,\infty),\R^d)$, \mbox{$i \!\in\! \{1,2\}$,} is a solution path law to   the McKean--Vlasov SDE \eqref{e1.15}--\eqref{e1.16} with initial condition $(s,w^y(r,x)dx)$.
As $w^y(r,x)dx \in \mathfrak{P}_0$, \eqref{e5.9'} and (III) yield	$$(\Pi^s_r)_*P^1=(\Pi^s_r)_*P^2.$$
 Now let $s \leq u_1 < \dots < u_n$, $n \in \N$. First assume $u_1 >s$. Then for $i \in \{1,2\}$
\begin{align*}
(\pi^s_{u_1},...,\pi^s_{u_n})_*P^i
=(\pi^s_s,...,\pi^s_{u_n+s-u_1})_*((\Pi^s_{u_1-s})_*P^i),
\end{align*}
and by the previous part of the proof the right hand side  coincides for $i=1$ and $i=2$, since $u_1-s>0$. Now assume $s=u_1 <\dots < u_n$. Then, since $(\pi^s_s)_*P^i = \delta_y$, we find
\begin{align*}
		(\pi^s_{u_1},...,\pi^s_{u_n})_*P^i
		=\delta_y\otimes
		((\pi^s_{u_2},...,\pi^s_{u_n})_*P^i)	
			\end{align*}
			(where $\mu \otimes \nu$ denotes the product measure of the measures $\mu$ and $\nu$).
			Since $u_2>s$, the argument of the first case again yields that the right hand side  coincides for $i=1$ and $i=2$. Hence we have proven
			$$(\pi^s_{u_1},...,\pi^s_{u_n})_*P^1
			=(\pi^s_{u_1},...,\pi^s_{u_n})_*P^2$$
			for all $s\leq u_1 < \dots < u_n$, $n \in \N$, i.e. $P^1 = P^2$, which proves the claim and, thereby, the assertion.
			
			\item[(ii)] First note 
			$$(\pi^s_t)_*P_{s,\zeta} = \mu^{s,\zeta}_t = \mu^{0,\zeta}_{t-s} = (\pi^s_t)_*[(\hat\Pi_s)_*P_{0,\zeta}],\   \forall t \geq s,$$ with $\hat\Pi_s$ as in \eqref{e5.8'}.  
			Both $P_{s,\zeta}$ and $(\hat\Pi_s)_*P_{0,\zeta}$ are solution path laws to the McKean--Vlasov equation \eqref{e1.15}--\eqref{e1.16} on $[s,\9)$ with initial condition $(s,\zeta)$, hence, by (i),
			$$P_{s,\zeta} = (\hat\Pi_s)_*P_{0,\zeta}.$$ For the final statement, note that for $\zeta = w^y(\delta,x)dx$ the measures $P_{0,\zeta}$ and $(\Pi^0_{\delta})_*P_{0,y}$  have identical one-dimensional time marginals $w^y(\delta+t,x)dx,$ $ t \geq 0$, and both are solution path laws to \eqref{e1.15}--\eqref{e1.16} with initial condition $(0,w^y(\delta,x)dx)$. Thus the assertion follows from the uniqueness assertion in (i).\hfill$\Box$
		\end{enumerate}
 	
	\begin{rem}\label{r5.4}
		Theorem \ref{t5.2} (ii) implies that the nonlinear Markov process $(P_{s,\zeta})_{(s,\zeta) \in \R_+\times \mathcal{P}_0}$ is uniquely determined by $$(P_y)_{y \in \R^d}, \ P_y := P_{0,\delta_y}.$$ 
		Therefore, we also refer to $(P_y)_{y\in \R^d}$ as the \emph{unique  nonlinear Markov process	 determined by \eqref{e1.15}--\eqref{e1.16} and the one-dimensional time marginals $w^y(t_0), y \in \R^d, t_0 \geq 0$}.
	\end{rem}

In analogy to the linear case (discussed in Section \ref{s5.1}), we now define $p$-Brownian motion as the unique  nonlinear Markov process, as mentioned in the previous remark. 

\begin{dfn}\label{d5.5} Let $d\ge2$, $p>2$. We call the family of path laws $(P_y)_{y\in\rrd}$ from Remark \ref{r5.4} $p$-{\it Brownian motion}.
	\end{dfn}
		
\begin{remark}\label{r5.6a} \		
		\begin{enumerate}
\item [(i)]  Moreover, in analogy to the linear case $p=2$, we call $P_0$ the $p$-{\it Wiener measure}.
\item[(ii)] As in the case $p=2$, we also call any stochastic process $X^y=(X^y(t))_{t\ge0}$ on a probability space $(\Omega,\mathcal{F},\mathbb{P})$ a $p$-{\it Brownian motion with start in $y$},   if $(X^y)_*\mathbb{P}=P_y$, with $P_y$ as in Remark \ref{r5.4}. The one-dimensional time marginals of $X^y$ are $\mathcal{L}_{X^y(t)}=w^y(t,x)dx$, $t>0$, $\mathcal{L}_{X^y(0)}=\delta_y$.
\end{enumerate}
\end{remark}

\begin{rem}\label{r5.6}
		We point out that for $p>2$, unlike in the case $p=2$, the measures $P_y$ are not given as the image measure of $P_0$ under the translation map $T_y: C([0,\infty),\R^d) \to C([0,\infty),\R^d), T_y(\omega) := \omega + y$ (compare Section \ref{s5.1}). 
		This follows, for instance, from the fact that $(T_y)_*P_0$  is \emph{not} the solution path law of the McKean--Vlasov SDE \eqref{e1.15}--\eqref{e1.16} wtih initial condition $\delta_y$.
	\end{rem}

	\section{A crucial uniqueness result for a linearized $p$-Laplace equation}\label{s6a}
	
	As mentioned before, a crucial result used for the proof of Theorem \ref{t5.2} is a restricted distributional uniqueness result for a linearized $p$-Laplace equation. Here, we formulate and prove this result (see Theorem \ref{t5.10} below) and, thereby, complete   Key Step 3. 
	
	\subsection{A linearized uniqueness result}\label{s6.1}
	
	Here, we set for $\delta >0$,
	$$w_\delta(t,x) := w^0(t+\delta,x),\quad \vrho_\delta(t,x) := |\nabla w_\delta(t)|^{p-2}(x),\ (t,x)\in[0,\9)\times\rrd.$$ 
	The results of this subsection hold for $w^y$ instead of $w^0$ for every initial value $y \in \R^d$, but for simplicity we only mention the case $y=0$.
	On $Q_T := (0,T)\times \R^d$, where $T>0$ or $T=\infty$, we consider the linearized version of equation \eqref{e1.13} 
	\begin{equation}\label{e5.4}
	\frac d{dt}\, u = \Delta\big(\vrho_\delta u\big) - \divv\big(\nabla\vrho_\delta u\big),
	\end{equation}
	which is a linear Fokker--Planck equation obtained by fixing in the nonlinear equation \eqref{e1.13}  a priori the coefficients $\vrho_\delta$ and $\nabla \vrho_\delta$ in place of $|\nabla u|^{p-2}$ and $\nabla(|\nabla u|^{p-2})$, see also Appendix D.
	\begin{dfn}\label{d5.7}
		By a {\it distributional}  \textit{solution to \eqref{e5.4} with initial condition $\nu \in \mathcal{M}^+_b$} we mean a function $u \in L^1(Q_T)$ such that $t\mapsto u(t,x)dx$ is a weakly continuous curve of (signed) Borel   measures with
		\begin{equation*}
		\int_0^T \int_{\R^d} (\vrho_\delta + |\nabla \vrho_\delta|)|u|\,dx dt < \infty,
		\end{equation*}
		such that for all $\psi \in C^\9_0(\R^d)$
		\begin{equation}\label{e5.5}
		\int_{\R^d}\psi u(t)\,dx - \int_{\R^d} \psi \,d\nu = \int_0^t \int_{\R^d} (\vrho_\delta \Delta \psi + \nabla \vrho_\delta \cdot \nabla \psi ) \, u\, dx dt,\quad \forall0<  t < T.
		\end{equation}
		$u$ is called {\it probability solution} (instead of {\it distributional probability solution}), if $u\ge0$ and $\nu,u(t,x)dx\in\mathcal{P}$ for all $t\in(0,T)$.
	\end{dfn}
	\begin{rem}\label{r5.8}
		An equivalent condition to \eqref{e5.5} is
	\begin{equation}\label{e*}
		\int_{(0,T)\times \R^d} \bigg( \frac{d \vf}{dt} + \divv\big(\vrho_\delta \nabla \vf \big)\bigg)u\, dx dt + \int_{\R^d} \vf(0)\,d\nu = 0
	\end{equation}	
		for all $\vf \in C^{\9}_0([0,T)\times \R^d)$.
	\end{rem}

	Distributional solutions to \eqref{e5.4} are not necessarily related to $w_\delta$, but clearly $u(t,x)=w_\delta(t,x)$   is a distributional solution to \eqref{e5.4} in the sense of the previous definition with initial condition $w_\delta(0,x)dx = w^0(\delta,x)dx$.
	
	We collect some basic properties of $w_\delta$ and $\vrho_\delta$ used in the sequel.
	\begin{lem}\label{l5.9}
		Let $\delta >0$. Then $w_\delta$ and $\vrho_\delta$ are nonnegative functions with the following properties.
		\begin{enumerate}
			\item[\rm(i)] $w_\delta \in \bigcap_{R>0}C_0([0,R]\times \R^d)\cap L^\infty([0,\infty)\times \R^d)$.
			\item [\rm(ii)] $\vrho_\delta \in \bigcap_{R>0} C_0([0,R]\times \R^d)$ and $\{x\in \R^d: \vrho_\delta(t,x)>0\}=\{x\in \R^d: 0<|x| \leq \beta (t+\delta)^{\frac k d}\}$ for all $t\geq 0$, where $\beta = (\frac{C_1}{q})^{\frac{p-1}{p}}$.
			\item [\rm(iii)] $\nabla \vrho_\delta \in L^\infty_{\loc}([0,\infty), L^q(\R^d;\R^d))$ for $q \in [1,d(p-1))$.
		\end{enumerate}
	\end{lem}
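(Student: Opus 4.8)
The plan is to derive all three items directly from the explicit formulas already recorded in Section~\ref{s4}: the Barenblatt profile \eqref{e1.18}, its gradient \eqref{auxa}, the expression \eqref{e4.4} for $|\nabla w^0|^{p-2}$, the expression \eqref{e4.4a} for its gradient, and the pointwise bound \eqref{e4.5}, in each case with $t$ replaced by $t+\delta$. The one structural observation that makes everything routine is that on a finite interval $[0,R]$ we have $t+\delta\in[\delta,R+\delta]$, so every negative power of $t+\delta$ occurring in these formulas is bounded by a constant depending only on $d,p,C_1,\delta,R$; and by Proposition~\ref{p4.1}~(ii) the supports of $w_\delta(t,\cdot)$ and $\vrho_\delta(t,\cdot)$ are contained in the fixed ball $B_R:=\{|x|\le\beta(R+\delta)^{k/d}\}$ for all $t\in[0,R]$, where $\beta=(C_1/q)^{(p-1)/p}$.

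For (i) I would first note nonnegativity of $w_\delta$ from the $(\cdot)_+$ in \eqref{e1.18}. Continuity on $[0,R]\times\R^d$ follows because $(t,x)\mapsto C_1-q(t+\delta)^{-kp/(d(p-1))}|x|^{p/(p-1)}$ is continuous, the exponent $\tfrac{p-1}{p-2}$ is strictly positive so $s\mapsto s_+^{(p-1)/(p-2)}$ is continuous, and the remaining factor $(t+\delta)^{-k}$ is continuous; together with the uniform support bound this gives membership in $C_0([0,R]\times\R^d)$ for every $R>0$. The $L^\infty$-bound comes from $k=(p-2+p/d)^{-1}>0$ and $0\le(C_1-\cdots)_+\le C_1$, which yield $w_\delta\le\delta^{-k}C_1^{(p-1)/(p-2)}$ everywhere, hence $w_\delta\in L^\infty([0,\infty)\times\R^d)$.

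For (ii) I would argue continuity of $\vrho_\delta=|\nabla w_\delta|^{p-2}$ the same way, now from \eqref{e4.4}: the three factors there — a negative power of $t+\delta$, the term $(C_1-\cdots)_+$, and $|x|^{(p-2)/(p-1)}$ (continuous because $\tfrac{p-2}{p-1}>0$) — are each continuous, so $\vrho_\delta\in C_0([0,R]\times\R^d)$ via the uniform support bound. The positivity set is then read off from the same formula: $\vrho_\delta(t,x)>0$ iff $x\ne 0$ (from $|x|^{(p-2)/(p-1)}$) and $|x|<\beta(t+\delta)^{k/d}$ (from $(C_1-\cdots)_+$), which gives the stated description.

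For (iii), starting from the bound \eqref{e4.5} with $t\mapsto t+\delta$ and using the two reductions above, I obtain on $[0,R]$ a pointwise estimate of the form $|\nabla\vrho_\delta(t,x)|\le C_R\,\mathds{1}_{B_R}(x)\big(|x|^{-1/(p-1)}+|x|\big)$ with $C_R$ independent of $t\in[0,R]$. Since $|x|$ is bounded on $B_R$ and $\int_{B_R}|x|^{-q/(p-1)}\,dx<\infty$ exactly when $q/(p-1)<d$, i.e. $q<d(p-1)$, this yields a uniform-in-$t$ bound on $|\nabla\vrho_\delta(t,\cdot)|_{L^q(\R^d)}$, which is precisely $\nabla\vrho_\delta\in L^\infty_{\loc}([0,\infty);L^q(\R^d;\R^d))$. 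I do not expect a genuine obstacle in this lemma: the only point where something must be checked rather than merely transcribed from Section~\ref{s4} is the integrability threshold $q<d(p-1)$ for the origin singularity of $\nabla\vrho_\delta$, and this is exactly the range imposed in (iii); all the rest is bookkeeping with the Barenblatt exponents.
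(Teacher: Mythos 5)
Your proposal is correct and matches the paper's (much terser) proof: items (i) and (ii) are read off the explicit formulas, and for (iii) one combines the pointwise bound \eqref{e4.5} (with $t\mapsto t+\delta$, so the negative $t$-powers are locally bounded) with spherical coordinates, reducing to $\int_0^1 r^{-q/(p-1)+d-1}\,dr<\infty$, i.e.\ $q<d(p-1)$. One incidental remark: your strict inequality $|x|<\beta(t+\delta)^{k/d}$ in the positivity set is in fact the accurate characterization, since by continuity $\vrho_\delta$ vanishes on the boundary sphere, whereas the lemma as printed writes $\le$.
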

	\begin{proof}
		The nonnegativity of $w_\delta$ and $\vrho_\delta$ as well as (i) and (ii) follow from their definitions. Regarding (iii), by taking into account \eqref{e4.5} and considering $d$-dimensional spherical coordinates, it suffices to note that $\int_0^1 r^{-\frac{q}{p-1}+d-1} dr <\infty$ holds. Indeed, $-\frac{q}{p-1}+d >0 \iff q < d(p-1).$ 
	\end{proof}

Comparing with Theorem \ref{t52} and the sets \eqref{e5.3a}, we are interested in uniqueness of distributional solutions to \eqref{e5.4} in the sense of Definition \ref{d5.7} in the class of probability solutions
	\begin{equation*}
	\Ascr_{\delta,T}:= 	\big\{u \in L^1\cap L^\infty(Q_T): t\mapsto u(t,x)dx \in C([0,T],\mathcal{P}), \, \exists C\geq 1: u(t,x)dx \leq C w_\delta(t,x)dx,\, \forall t \in [0,T]\big\},
	\end{equation*}
	where the inequality $u(t,x)dx\leq C w_\delta(t,x)dx$ is understood set-wise, i.e. 
	
	$$\int_A u(t,x)dx \leq C \int_A w_\delta(t,x)dx,\quad \forall A \in \Bscr(\R^d),$$
	and $C([0,T],\mathcal{P})$ is the set of weakly continuous maps $t\mapsto\mu_t\in\mathcal{P}$. 
	If $T=\9$, the interval $[0,T]$ in the definition of $\mathcal{A}_{\delta}$ is understood as $[0,\9)$. 
	\begin{remark}\label{r5.10} Clearly, $w_\delta \in \Ascr_{\delta,T}$.
	Note that $u \in \Ascr_{\delta,T}$ implies $u(t,x)dx = \big(\eta(t,x)w_\delta(t,x)\big)dx$ with $0\leq \eta \leq C$ for some $C\ge1$  $dxdt$-a.s. and, for each $t \in [0,T]$, $\supp \eta(t) \subseteq \supp w_\delta(t)$.
\end{remark}
The following result was crucially used in  the proof of Theorem \ref{t5.2}. 
	
	\begin{theorem}\label{t5.10}
		Let $d \geq 2$, $p > 2$, $\delta>0$,  $0<T<\infty$ and $v_1\in \Ascr_{\delta,T}$ be a distributional solution to equation \eqref{e5.4} in the sense of Definition {\rm\ref{d5.7}} with initial condition $w_\delta(0,x)dx = w^0(\delta,x)dx$. Then $v_1 = w_\delta$ $dxdt$-a.s. on $Q_T$. In particular, $w_\delta$ is the only distributional solution to \eqref{e5.4} on $(0,\infty)\times \R^d$ in $\Ascr_{\delta,\infty}$ with initial condition  $w_\delta(0,x)dx$.	\end{theorem}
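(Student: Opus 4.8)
The plan is to show that $v:=v_1-w_\delta$ vanishes $dxdt$-a.e.\ on $Q_T$ by a relative‑entropy (weighted $L^2$) estimate tailored to the Barenblatt solution. First I would record the elementary reductions. By linearity of \eqref{e5.4}, and since $v_1$ and $w_\delta$ are distributional solutions of \eqref{e5.4} in the sense of Definition \ref{d5.7} with the \emph{same} initial datum $w_\delta(0,x)dx$, the difference $v$ is a distributional solution of \eqref{e5.4} (equivalently of $\frac{d}{dt}v=\divv(\vrho_\delta\nabla v)$, cf.\ Remark \ref{r5.8}) with \emph{zero} initial datum, $v\in L^1\cap L^\infty(Q_T)$, and, since $0\le v_1\le Cw_\delta$, one has $|v(t,x)|\le (C+1)w_\delta(t,x)$ a.e.; in particular $v=\eta\,w_\delta$ for a bounded measurable $\eta$ with $|\eta|\le C+1$ and $\supp\eta(t)\subseteq\supp w_\delta(t)\subseteq\overline{B}(0,R_T)$ for $t\le T$, with $R_T$ independent of $t$ (cf.\ Remark \ref{r5.10} and Proposition \ref{p4.1}(ii)), and $\int_{\rrd}v(t)\,dx=1-1=0$ for every $t$, both $v_1(t,x)dx$ and $w_\delta(t,x)dx$ being probability measures. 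The assertion for $\Ascr_{\delta,\infty}$ on $(0,\infty)\times\rrd$ then follows from the finite‑$T$ case by restriction.

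The core is the following identity: using that $w_\delta$ itself solves $\frac{d}{dt}w_\delta=\divv(\vrho_\delta\nabla w_\delta)$ — i.e.\ the parabolic $p$‑Laplace equation \eqref{e1.2}, because $\vrho_\delta=|\nabla w_\delta|^{p-2}$ — a direct manipulation (test the equation for $v$ with $v/w_\delta$, use the equation for $w_\delta$, and integrate by parts, noting $\nabla(v/w_\delta)=(\nabla v-\eta\nabla w_\delta)/w_\delta$) shows that, at least formally,
\begin{equation*}
\frac{d}{dt}\int_{\rrd}\frac{v(t,x)^2}{w_\delta(t,x)}\,dx \;=\; -\,2\int_{\rrd} w_\delta(t,x)\,\vrho_\delta(t,x)\,\bigl|\nabla\eta(t,x)\bigr|^2\,dx\;\le\;0 .
\end{equation*}
Write $E(t)$ for the left‑hand functional; it is finite, since $E(t)=\int_{\rrd}\eta(t)^2w_\delta(t)\,dx\le(C+1)^2$. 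Hence $E$ is non‑increasing on $(0,T)$, and integrating the identity gives in addition $\int_0^T\!\int_{\rrd}w_\delta\vrho_\delta|\nabla\eta|^2\,dx\,dt\le\tfrac12 E(0^+)<\infty$. To pin down the initial value I would invoke a weighted Poincaré inequality on $\supp w_\delta(t)$, with weight $w_\delta$ and gradient‑weight $w_\delta\vrho_\delta$: since $\int_{\rrd}v(t)\,dx=0$, it yields $E(t)\le C_P(t)\int_{\rrd}w_\delta\vrho_\delta|\nabla\eta(t)|^2\,dx$ with $\sup_{t\in[0,T]}C_P(t)<\infty$ (here $\delta>0$ keeps $\supp w_\delta(t)$ uniformly bounded, and $d\ge2$ makes $x=0$ an admissible degeneracy point). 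Choosing $t_n\downarrow0$ along which the dissipation tends to $0$ (possible by the finiteness of its time integral) forces $E(t_n)\to0$, whence $E(0^+)=0$ by monotonicity, so $E\equiv0$ and $v_1=w_\delta$ a.e.

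The real work, and the main obstacle, is to make the entropy identity rigorous, the difficulty being the degeneracy of $\vrho_\delta=|\nabla w_\delta|^{p-2}$, which vanishes like $|x|^{(p-2)/(p-1)}$ at $x=0$ and vanishes on the moving free boundary $\partial\supp w_\delta(t)$ (see the explicit formulas \eqref{e4.4}, \eqref{e4.4a}, \eqref{e4.5}); consequently $\nabla\eta$ need not be a priori locally integrable near these sets, and the chain rule and integrations by parts above are not justified off‑hand. I would proceed by regularization: mollify $v$ in space, use the equation to derive a weighted Caccioppoli estimate yielding $\sqrt{\vrho_\delta}\,\nabla v\in L^2_{\loc}(Q_T)$ and — combining with the explicit bounds on $w_\delta,\vrho_\delta,\nabla\vrho_\delta$ — the square‑integrability $\sqrt{w_\delta\vrho_\delta}\,\nabla\eta\in L^2_{\loc}(Q_T)$, and establish the renormalised (chain‑rule) form of the linear equation for $v$ in the spirit of DiPerna--Lions; the error/commutator terms in both steps are controlled by $\nabla\vrho_\delta\in L^q_{\loc}$ for $q<d(p-1)$ from Lemma \ref{l5.9}(iii), and it is precisely here that the standing hypothesis $p>2\bigl(1+\tfrac1d\bigr)$ enters: it is equivalent to $d(p-1)>d+2$, i.e.\ it places $\nabla\vrho_\delta$ in a Lebesgue space \emph{above} the parabolic critical exponent $d+2$, which the renormalisation argument requires. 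A cut‑off argument localising away from $\{x=0\}$ and from $\partial\supp w_\delta(t)$, using the explicit vanishing rates of $\vrho_\delta$ and $w_\delta$, then lets one pass to the limit and obtain the identity in integrated form. I expect the extraction of this weighted energy bound in the presence of the Barenblatt degeneracy, and the verification of the weighted Poincaré inequality with time‑uniform constant, to be the most delicate points; both lean heavily on the explicit form of $w^0$ and of $|\nabla w^0|^{p-2}$, $\nabla(|\nabla w^0|^{p-2})$.
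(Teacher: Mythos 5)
Your proposal is a genuinely different strategy from the paper's. You set up a relative-entropy (weighted $L^2$) functional $E(t)=\int v(t)^2/w_\delta(t)\,dx$, derive a dissipation identity, and try to combine it with a weighted Poincar\'e inequality and the zero initial datum. The paper instead runs a Holmgren/duality argument: for an arbitrary $f\in C^\infty_0(Q_T)$ it solves a uniformly parabolic \emph{regularised backward adjoint} problem \eqref{e6.5} for $\vf_\vp$, establishes $\vp$-uniform maximum-principle, energy, and $\partial_t$-estimates (\eqref{e6.9}, \eqref{e6.10}, \eqref{e6.14}), inserts temporal cut-offs $\eta_\lbb\vf_\vp$ as test functions in the equation for $v$, and then shows (Claims~\ref{claim2}--\ref{claim3}) that the error terms $\eta_\lbb'\vf_\vp$ and $\vp\Delta\vf^\lbb_\vp$ vanish in the joint limit $\vp,\lbb\to0$ in a $w_\delta$-weighted sense, which is exactly what the bound $|v|\le Cw_\delta$ can absorb; this yields $\int_{Q_T}fv=0$ for all $f$. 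The duality route avoids a chain rule for $v/w_\delta$ altogether — the hard work is shifted to $\vp$-independent bounds for the regularised adjoint, where the degeneracy of $\vrho_\delta$ enters through the explicit estimates \eqref{e6.22}--\eqref{e6.29}, and the hypothesis $p>2(1+\tfrac1d)$ enters through $\nabla\vrho_\delta\in L^{d+2}$ (cf.\ \eqref{e6.6}, \eqref{e6.24}--\eqref{e6.26}), not through renormalisation.

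There is, however, a genuine gap in your argument that goes beyond the (honestly flagged) difficulties with the chain rule, the weighted Caccioppoli estimate, and the weighted Poincar\'e inequality. You conclude $E(0^+)=0$ by ``choosing $t_n\downarrow0$ along which the dissipation tends to $0$ (possible by the finiteness of its time integral).'' That implication is false: $\int_0^T D(t)\,dt<\infty$ does not produce any sequence $t_n\downarrow0$ with $D(t_n)\to0$ (take $D\equiv1$). And the zero initial datum does not help directly either: it only says that $v(t,x)\,dx\to0$ weakly as measures, which does not force the weighted $L^2$ quantity $E(t)=\int\eta(t)^2w_\delta(t)\,dx$ to vanish in the limit — $E$ is not weakly lower-semicontinuous \emph{downward}, and a bounded $\eta$ can oscillate so that $\eta w_\delta\rightharpoonup0$ while $\int\eta^2w_\delta$ stays bounded away from $0$. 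Even if the Poincar\'e inequality held with a time-uniform constant, feeding it into $E'=-2D\le-\tfrac{2}{C_P}E$ only yields $E(t)\le E(0^+)e^{-2t/C_P}$, which is perfectly compatible with $E(0^+)>0$; the Gronwall bound does not close the loop. So, as written, the conclusion $E\equiv0$ does not follow, and some new ingredient is needed to rule out a positive jump of $E$ at $t=0^+$. Separately, the weighted Poincar\'e inequality with weight $w_\delta$ on the left and $w_\delta\vrho_\delta$ on the right, uniformly in $t\in[0,T]$ despite the degeneracy both at $x=0$ and across the moving free boundary $\partial\supp w_\delta(t)$, is stated but not proved and is itself a substantial claim; the paper sidesteps it entirely.
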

As explained in Section \ref{s5}, for the proof of Theorem \ref{t5.2}, we need the result of Theorem \ref{t5.10} for all initial times $s\ge0$. More precisely, for $s\ge0$, one considers $w_\delta(t-s)$ and $\zeta_\delta(t-s)$ instead of $w_\delta(t)$ and $\vrho_\delta(t)$ in equation \eqref{e5.4} and considers \eqref{e5.4} on $Q_{s,T}:=(s,T)\times\rrd$ instead of $Q_T$. It is obvious how to extend Definition \ref{d5.7} and the definition of the sets $\mathcal{A}_{\delta,T}$ in this regards, and also that Theorem \ref{t5.10} holds accordingly.

\begin{remark}\label{r6.6} \
	\begin{itemize}
	  \item[(i)] We would like to mention here that Theorem \ref{t5.10} appears to be really new, though it is a uni\-que\-ness result for a linear Fokker--Planck equation which, however, is degenerate. For uni\-que\-ness results for the latter we refer to \cite[Section 9.8]{9a} and the references therein. It should be noted that \mbox{\cite[Theorem 9.8.2]{9a}} (see also Theorem 1 and its complete proof in the original work \cite{10a}) is close to what we need for our case. But it requires that $\vrho^{-\frac12}_\delta\nabla\vrho_\delta$ is bounded, which, however, is not true in our case, so it does not apply. Our proof of Theorem \ref{t5.10} below is strongly based on the explicit form of $w_\delta$ and $\vrho_\delta$. In this respect we, in particular, refer to the proof of Claim 4 below.
\item[(ii)]  As an easy consideration shows, the assertion of Theorem \ref{t5.10} is equivalent to the statement that $w_\delta$ is an extreme point in the convex set of all probability solutions to \eqref{e5.4} with initial condition $w^0(\delta,x)$, see also \cite[Lemma 3.5]{37a}.
\end{itemize}
\end{remark}

	\subsection{Proof of Theorem \ref{t5.10}}\label{s6.2}
	
	Let $T\in(0,\9),\ \delta>0$, $d\ge2$, and $p>2$. 

\begin{remark}\label{r6.2} We note that in Remark  \ref{r5.8}, since
		$${\rm div}(\vrho_\delta\nabla  \vf)=\vrho_\delta\Delta \vf+\nabla\vrho_\delta
		\cdot\nabla \vf,$$
		by a standard localization argument we can replace $C^\9_0([0,T)\times\rrd)$ by
		$$C^\9_{0,b}([0,T)\times\rrd):=\{\vf\in C^\9_b([0,T)\times\rrd)\mid\exists\lbb>0\mbox{ such that }\vf(t,x)=0,\ff(t,x)\in[T-\lbb,T)\times\rrd\}.$$For the convenience of the reader we include a proof in Appendix B.
	\end{remark}

	\begin{proof}[Proof of Theorem {\rm\ref{t5.10}}] Let $v_1\in\mathcal{A}_{\delta,T}$, i.e. there is $\eta\in L^1(Q_T)\cap L^\9(Q_T)$ such that $\eta\ge0
		$,\break $|\eta(t)w_\delta(t)|_{L^1(\rrd)}=1$ for all $t\in[0,T]$, and $v_1=\eta w_\delta$.   Define $$v:=w_\delta-v_1=(1-\eta)w_\delta.$$ 
		Note that $v$ is a (signed) distributional solution to \eqref{e5.4} in the sense of Definition \ref{d5.7} with initial condition $v(0)\equiv0$. Let $f\in C^\9_0(Q_T)$ and  for $\vp\in(0,1)$  consider the equation
\begin{equation}\label{e6.4z}
		\barr{l}
		\dd\frac{\pp\vf_\vp}{\pp t}+{\rm div}((\rho_\delta+\vp)\nabla\vf_\vp)=f\mbox{\ \ in }Q_T\vspace*{1,5mm}\\
		\vf_\vp(T,x)=0,\ \ x\in\rrd.
		\earr\end{equation}
		By standard existence theory for linear parabolic equations (see, e.g., \cite{10'}, p.~345) it follows that equation \eqref{e6.4z}) has a unique solution
		$$\vf_\vp\in C([0,T];L^2)\cap L^2(0,T;H^1),$$
		with $\frac{d\vf_\vp}{dt}\in L^2(0,T;H\1)$. Moreover, we have
		\begin{equation}\label{e6.5z}
			|\vf_\vp(t)|^2_2+\int^T_0\int_\rrd(\rho_\delta(t,x)+\vp)|\nabla\vf_\vp(t,x)|^2dtdx
		\le\dd\int^T_0\int_\rrd|f(t,x)|^2dtdx.
	\end{equation}
In particular, \eqref{e6.4z} is equivalent to
	\begin{equation}\label{e6.5}
\barr{l}
\dd\frac{\pp\vf_\vp}{\pp t} +(\vrho_\delta+\vp)\Delta\vf_\vp+\nabla
\vrho_\delta\cdot\nabla\vf_\vp=f\mbox{\ \ on }Q_T,\vspace*{1,5mm}\\
\vf_\vp(T,x)=0,\ x\in\rrd.\earr
\end{equation}	\begin{claim}\label{claim1a}  We have $\vf_\vp\in W^{2,2}(Q_T)$, that is,
	\begin{equation}\label{e6.6z}
		\frac{\pp\vf_\vp}{\pp t},\frac\pp{\pp x_i}\,\vf_\vp,\ 
		\frac\pp{\pp x_i}\ \frac\pp{\pp x_j}\,\vf_\vp\in L^2(Q_T),\ i,j=1,...,d.\end{equation}
		\end{claim}		
\medskip\noindent{\it Proof of Claim} \ref{claim1a}.
	We set $g_\vp=\nabla\rho_\delta\cdot\nabla
	\vf_\vp.$ By \eqref{e6.5z} we have $\vf_\vp,|\nabla\vf_\vp|\in L^2(Q_T)$. By Lemma \ref{l5.9} (iii), we know that $\nabla\rho_\delta\in L^\9(0,T;L^q)$, $q\in[1,d(p-1))$. Fix $q\in(d,d(p-1))$. Then, by H\"older's inequality,
	$$g_\vp\in L^{\gamma_1}(Q_T),\ \mbox{ for }\gamma_1:=\frac{2q}{1+q}\in(1,2).$$  
	Taking into account that, by \eqref{e6.5},
	$$\frac{\pp\vf_\vp}{\pp t}+(\rho_\vp+\vp)\Delta\vf_\vp=f-g_\vp\in L^{\gamma_1}(Q_T),$$
	we get by \cite[Theorem 9.1, p.~341]{31a} that $\vf_\vp\in W^{2,\gamma_1}(Q_T)$,   that is,
	$$\frac{\pp\vf_\vp}{\pp t},\frac{\pp\vf_\vp}{\pp x_1},\frac\pp{\pp x_i}\cdot\frac\pp{\pp x_j}\,\vf_\vp\in L^{\gamma_1}(Q_T),\ i,j=1,...,d.$$
	On the other hand, by the Sobolev--Nirenbergh--Gagliardo theorem (see, e.g., \cite{10'}, p.~283) we have
$$|\nabla\vf_\vp|\in L^{\alpha_1}(Q_T)\mbox{\ \ for }\alpha_1:=\frac{\gamma_1d}{d-\gamma_1}$$
and this yields as above
$$g_\vp\in L^{\gamma_2}(Q_T),\ \mbox{ for }  \gamma_2:=\frac{\alpha_1q}{\alpha_1+q}\in(1,\9).$$
Then, again by Theorem 9.1 in \cite{31a}, it follows that $\vf_\vp\in W^{2,\gamma_2}(Q_T)$ and, therefore,
 $$|\nabla\vf_\vp|\in L^{\alpha_2}(Q_T)\ \mbox{ for }\alpha_2:=\frac{\gamma_2d}{d-\gamma_2}.$$
 Continuing, we obtain sequences $(\gamma_1)_{i\in\mathbb{N}},$ $(\alpha_i)_{i\in\mathbb{N}}$ such that
 \begin{equation}
 \label{e6.8z}
 g_\vp\in L^{\gamma_i}(Q_T),\ \vf_\vp\in W^{2,\gamma_i}(Q_T),\ \ff i\in\mathbb{N},
 \end{equation}
 and
 $$\barr{l}
 \gamma_1=\dd\frac{2q}{2+q},\ \gamma_{i+1}=\dd\frac{\alpha_iq}{\alpha_i+q},\ i\in\mathbb{N},\vspace{1,5mm}\\ 
\alpha_i=\dd\frac{\gamma_id}{d-\gamma_i},\ i\in\mathbb{N},\mbox{ as long as }\gamma_i<d.\earr$$
This yields the recursive formula
$$ \gamma_1=\dd\frac{2q}{2+q}\in(1,2),\ \gamma_{i+1}=\gamma_i\,\dd\frac{dq}{\gamma_id+q(d-\gamma_i)},\  i\in\mathbb{N}, \mbox{ as long as }\gamma_i<d.$$
Since $q>d$, we have for all $i\in\mathbb{N}$ with $\gamma_i<d$ that 
\begin{equation}
\label{e6.9z}
\gamma_{i+1}=\gamma_i\,\frac{dq}{dq+\gamma_i(d-q)}>\gamma_i.
\end{equation}
Suppose that
\begin{equation}
\label{e6.10z}
\gamma_i\le2\mbox{ for all }i\in\mathbb{N}.\end{equation}
Then, by \eqref{e6.9z} there exists
$$\gamma:=\lim_{i\to\9}\gamma_i\le2$$
and, passing to the limit in \eqref{e6.9z}, we obtain
$$\gamma=\gamma\,\frac{dq}{dq+\gamma(d-q)}>\gamma.$$
This contradiction implies that \eqref{e6.10z} is wrong, so there exists $i\in\mathbb{N}$ such that $\gamma_i>2$ and Claim \ref{claim1a} follows by \cite[Theorem 9.1]{31a}, because $g_\vp\in L^2(Q_T)$ by \eqref{e6.8z} and interpolation, since $\gamma_1\in(1,2).$ \hfill$\Box$\bigskip
  
By the maximum principle
		\begin{equation}\label{e6.9}
		\sup_{\vp\in(0,1)}|\vf_\vp|_{L^\9(Q_T)}\le C_0|f|_{L^\9(Q_T)}.\end{equation}
		This follows in a standard way from \eqref{e6.5} by multiplying the equation with $(\vf_\vp-(T-t)|f|_{L^\9(Q_T)})_+$ and $(\vf_\vp+(T-t)|f|_{L^\9(Q_T)})_-$, respectively, and integrating over  $Q_T$.  
		
		Setting $\vrho^\vp_\delta:=\vrho_\delta+\vp$ and multiplying \eqref{e6.5} by $-\vf_\vp$, integrating over $(t,T)\times\rrd$ and using that $\vf_\vp(T,\cdot)=0$, we obtain by Gronwall's lemma (see \eqref{e6.5z} 
		\begin{equation}\label{e6.10}
		|\vf_\vp(t)|^2_{L^2(\rrd)}+
		\int^T_t\int_\rrd\vrho^\vp_\delta|\nabla\vf_\vp|^2dxds
		\le C_1\dd\int^T_t\int_\rrd|f|^2dxds,\ \ff t\in(0,T).\end{equation}		
		
		Now, we define for $\lbb\in(0,1)$
		$$\vf^\lbb_\vp(t,x)=\eta_\lbb(t)\vf_\vp(t,x),\ (t,x)\in Q_T,$$
		where $\eta_\lbb(t)=\eta\left(\frac t\lbb\right)\eta\left(\frac{T-t}\lbb\right)$ and 
		$\eta\in C^2([0,\9))$ is such that
		$$\eta(r)=0\mbox{ for }r\in[0,1],\ \eta(r)=1\mbox{ for }r>2.$$
	
		Below, we abbreviate $\frac d{dt}\,f$ by $f_t$.
		
		\begin{claim}\label{claim}    
			We have
			\begin{equation}\label{e6.11}
			\int^T_0{}_{H^1}\left<\vrho^\vp_\delta(t)
			\nabla\vf_\vp(t),\nabla(\vf_\vp)_t(t)\right>_{H^{-1}}dt
			\le-\frac12\int_{Q_T}(\vrho_\delta)_t|\nabla\vf_\vp(t,x)|^2dtdx,
			\end{equation}where ${}_{H^1}\left<\cdot,\cdot\right>_{H^{-1}}$ is the duality pairing on $H^1(\rrd;\rrd)\times H^{-1}(\rrd;\rrd)$.
		\end{claim}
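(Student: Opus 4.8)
The goal is to integrate by parts in time the term $\int_0^T {}_{H^1}\langle \vrho^\vp_\delta(t)\nabla\vf_\vp(t),\nabla(\vf_\vp)_t(t)\rangle_{H^{-1}}\,dt$ and to recognize the resulting boundary contributions as vanishing. The plan is first to establish that the map $t\mapsto \int_{\rrd}\vrho^\vp_\delta(t,x)|\nabla\vf_\vp(t,x)|^2\,dx$ is absolutely continuous on $[0,T]$ with a product-rule formula for its derivative. This should follow from the regularity \eqref{e6.7}, which gives $\nabla\vf_\vp\in L^2(Q_T)$ and $\nabla(\vf_\vp)_t=(\nabla\vf_\vp)_t\in L^2((0,T);H^{-1})$ together with $\nabla\vf_\vp\in L^2((0,T);H^1)$, so that the standard integration-by-parts formula for the $H^1$--$H^{-1}$ Gelfand triple applies to $t\mapsto|\nabla\vf_\vp(t)|^2_{L^2}$; one then multiplies by the (time-dependent, bounded, Lipschitz in $t$ on compacts away from $t=0$ — in fact $\vrho_\delta$ is continuous on $[0,T]\times\rrd$ by Lemma \ref{l5.9}(ii)) weight $\vrho^\vp_\delta$ and uses that $(\vrho_\delta)_t$ exists $dxdt$-a.e. with controlled growth. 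Concretely, I would write, for $0<t<T$,
\begin{equation*}
\frac12\frac{d}{dt}\int_{\rrd}\vrho^\vp_\delta(t)|\nabla\vf_\vp(t)|^2\,dx = \frac12\int_{\rrd}(\vrho_\delta)_t(t)|\nabla\vf_\vp(t)|^2\,dx + {}_{H^1}\langle\vrho^\vp_\delta(t)\nabla\vf_\vp(t),\nabla(\vf_\vp)_t(t)\rangle_{H^{-1}},
\end{equation*}
which is the key identity; integrating it over $(0,T)$ and rearranging yields
\begin{equation*}
\int_0^T {}_{H^1}\langle\vrho^\vp_\delta(t)\nabla\vf_\vp(t),\nabla(\vf_\vp)_t(t)\rangle_{H^{-1}}\,dt = \frac12\Big[\int_{\rrd}\vrho^\vp_\delta(t)|\nabla\vf_\vp(t)|^2\,dx\Big]_{t=0}^{t=T} - \frac12\int_{Q_T}(\vrho_\delta)_t|\nabla\vf_\vp|^2\,dtdx.
\end{equation*}

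It then remains to deal with the boundary term $\tfrac12[\,\cdot\,]_0^T$. At $t=T$ it vanishes because $\vf_\vp(T,\cdot)=0$ and hence $\nabla\vf_\vp(T,\cdot)=0$ by \eqref{e6.8} (the $C([0,T],L^2)$-version). At $t=0$ the term $\int_{\rrd}\vrho^\vp_\delta(0)|\nabla\vf_\vp(0)|^2\,dx$ is finite and \emph{nonnegative}, since $\vrho^\vp_\delta=\vrho_\delta+\vp\ge 0$; therefore subtracting it only helps, i.e. it can be dropped to obtain the claimed inequality "$\le$". This is precisely where the inequality (rather than equality) in \eqref{e6.11} comes from. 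I would make sure the $t=0$ evaluation is legitimate by noting $\vf_\vp\in C([0,T],L^2)$ and, via the parabolic regularity for \eqref{e6.5}, that $\nabla\vf_\vp$ has a weakly-$L^2$-continuous (or at least a well-defined trace) version at $t=0$; if this trace regularity is delicate, an alternative is to integrate over $(t_0,T)$ for $t_0>0$, discard the nonnegative boundary term at $t_0$, and let $t_0\downarrow 0$ using \eqref{e6.10} to control $\int_{Q_T}\vrho^\vp_\delta|\nabla\vf_\vp|^2$, avoiding any trace statement at $0$ altogether.

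The main obstacle I anticipate is justifying the product-rule / integration-by-parts identity for the weighted quantity $t\mapsto\int\vrho^\vp_\delta(t)|\nabla\vf_\vp(t)|^2\,dx$ with full rigor: one needs that the weight $\vrho^\vp_\delta$ is regular enough in $t$ (it is continuous and, by the explicit formula \eqref{e4.4} together with \eqref{e4.4a}, locally Lipschitz in $t$ on $(0,T)\times\rrd$ with $(\vrho_\delta)_t\in L^1(Q_T)$ by the same kind of estimate as in the proof of Theorem \ref{t4.2}) and that $\nabla\vf_\vp$ lies in the Gelfand-triple setting $L^2(0,T;H^1)$ with time-derivative in $L^2(0,T;H^{-1})$ — the latter following from \eqref{e6.7}. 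A clean way around potential subtleties with multiplying an $H^{-1}$-valued derivative by a merely bounded weight is to first approximate $\vrho^\vp_\delta$ by smooth-in-time functions (mollify in $t$), prove the identity for the mollifications where everything is classical, and pass to the limit using \eqref{e6.10}, \eqref{e6.7} and dominated convergence; the sign structure ($\vrho^\vp_\delta\ge 0$) is preserved throughout, so the final inequality \eqref{e6.11} survives the limit. The term $\frac12\int_{Q_T}(\vrho_\delta)_t|\nabla\vf_\vp|^2$ on the right-hand side is exactly what is produced and is finite by $(\vrho_\delta)_t\in L^\infty_{\loc}$ away from the support boundary combined with \eqref{e6.10}; no further estimation of it is needed at this stage, as it will be handled in the subsequent claims.
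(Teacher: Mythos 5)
Your overall strategy---integrate by parts in time, observe that the boundary contribution at $t=T$ vanishes since $\vf_\vp(T,\cdot)=0$, and drop the nonnegative boundary contribution at $t=0$---is precisely the paper's, and you correctly identify that the inequality (rather than equality) in \eqref{e6.11} comes exactly from discarding $\tfrac12\int_{\rrd}\vrho^\vp_\delta(0)|\nabla\vf_\vp(0)|^2\,dx\ge 0$. Where your rigorous implementation diverges slightly from the paper's is in the approximation step: you propose mollifying the \emph{weight} $\vrho^\vp_\delta$ in $t$, but the actual difficulty is not the time-regularity of $\vrho^\vp_\delta$ (already continuous on $[0,T]\times\rrd$ and $C^1$ for $t>0$); it is the time-regularity of $\vf_\vp$, which together with its first time-derivative lies only in $L^2$ in $t$. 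Mollifying $\vrho^\vp_\delta$ would not on its own make the pairing $\langle\vrho^\vp_\delta\nabla\vf_\vp,\nabla(\vf_\vp)_t\rangle$ amenable to a classical product rule, since $\nabla(\vf_\vp)_t$ remains only $H^{-1}$-valued. The paper instead approximates $\vf_\vp$ by $\vf^\vp_\nu\in C^1([0,T];H^1)$ with $\vf^\vp_\nu(T,\cdot)=0$ (via a time mollification shifted so as to vanish at $T$), integrates by parts classically for those approximants, and passes to the limit using the strong convergences $\nabla\vf^\vp_\nu\to\nabla\vf_\vp$ in $L^2((0,T);H^1)$ and $\nabla(\vf^\vp_\nu)_t\to\nabla(\vf_\vp)_t$ in $L^2((0,T);H^{-1})$; the well-posedness of the left-hand duality pairing is separately secured by verifying $\vrho^\vp_\delta\nabla\vf_\vp\in L^2((0,T);H^1)$ via Lemma \ref{l5.9}(iii) and a Sobolev embedding. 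Your fallback suggestion of integrating over $(t_0,T)$ and sending $t_0\downarrow 0$ is a sensible way to avoid trace issues at $t=0$, but one would still need the interior integration by parts justified, which again is most cleanly done by smoothing $\vf_\vp$ rather than the weight. In short: identical core idea, but aim the mollification at $\vf_\vp$, not at $\vrho^\vp_\delta$.
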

		
		\medskip\noindent{\it Proof of Claim} \ref{claim}. 
		We set $p^*:=\frac{2d}{d-2},\ q^*:=d$ if $d\ge3$, and $p^*:=q^*:=4$ if $d=2$, and 	
		note that by \eqref{e6.5}--\eqref{e6.5z} we have    $\nabla(\vf_\vp)_t\in L^2((0,T);H\1)$ and,  for a.e.  $t\in(0,T)$,
		\begin{align*}
		|\vrho^\vp_\delta(t)\nabla\vf_\vp(t)|_{H^1} 
		&\le|\nabla\vrho_\delta(t)\cdot
		\nabla\vf_\vp(t)|_{L^2}
		+|\vrho^\vp_\delta(t)\Delta\vf_\vp(t)|_{L^2}\\
		&\le|\vrho^\vp_\delta(t)|_{L^\9}|\Delta\vf_\vp(t)|_{L^2}
		+\(\int_\rrd|\nabla\vrho_\delta(t,x)|^2|\nabla\vf_\vp(t,x)|^2dx\)^{\frac12}\\
		&\le|\vrho^\vp_\delta(t)|_{L^\9}|\Delta\vf_\vp(t)|_{L^2}
		+C_2|\nabla\vf_\vp(t)|
		_{L^{p^*}}|\nabla\vrho_\delta(t)|_{L^{q^*}}
		\le C_3|\vf_\vp(t)|_{H^2},
		\end{align*}
		because, by Lemma \ref{l5.9} (iii), $|\nabla\vrho_\delta|_{L^{q^*}}\in L^\9(0,T)$.  Since  by \eqref{e6.6z} yields  $|\vf_\vp|_{H^2}\in L^2(0,T)$, we infer that $\vrho^\vp_\delta\nabla\vf_\vp\in L^2((0,T);H^1)$ and so  the left hand side  of \eqref{e6.11}  is well defined. We choose a sequence $\{\vf^\vp_\nu\}\subset C^1([0,T];H^1)$ such that $\vf^\vp_\nu(T,\cdot)=0$ and, for $\nu\to0$,
		\begin{equation}\label{e6.12}
		\barr{rcll}
		\nabla\vf^\vp_\nu&\to&\nabla\vf_\vp&\mbox{ strongly in }L^2((0,T);H^1)\\
		\nabla(\vf^\vp_\nu)_t&\to&\nabla(\vf_\vp)_t&\mbox{ strongly in }L^2((0,T);H\1)
		\earr
		\end{equation}
		Such a sequence might be, for instance, 
		$$\vf^\vp_\nu(t)=(\vf_\vp*\theta_\nu)(t)-
		(\vf_\vp*\theta_\nu)(T),$$ where $\theta_\nu=\theta_\nu(t)$, $\nu>0$, is a standard mollifier sequence on $\rr$. Here, for technical purposes, we define $\vf_\vp(r)$ by $\vf_\vp(0)
		$ and $\vf_\vp(T)$ for $r\in(-1,0)$ and $r\in(T,T+1)$, respectively. 		
		Then, since $\nabla\vf^\vp_\nu(T)=0$, we have
		\begin{align*}
		\int^T_0{}_{H^{1}}\left<
		\vrho^\vp_\delta(t)\nabla\vf^\vp_\nu(t),
		\nabla(\vf^\vp_\nu(t))_t\right>_{H^{-1}}dt
		=\frac12\int_{Q_T}
		\vrho^\vp_\delta(t,x)
		|\nabla\vf^\vp_\nu(t,x)|^2_t dtdx
		\\
		=-\frac12\int_{\rrd}
		\vrho^\vp_\delta(0,x)
		|\nabla\vf^\vp_\nu(0,x)|^2 dx
		-\frac12\int_{Q_T}
		\vrho_\delta(t,x)
		|\nabla\vf^\vp_\nu(t,x)|^2 dtdx\\
		\le-\frac12\int_{Q_T}(\vrho_\delta)_t
		(t,x)
		|\nabla\vf^\vp_\nu(t,x)|^2dtdx.
		\end{align*}
	Letting $\nu\to0$,   we get by 
		\eqref{e6.12} that \eqref{e6.11} holds, as claimed.\hfill$\Box$\bigskip
		
		Now, by \eqref{e6.5} and \eqref{e6.11}, we have
		\begin{align*}
		\int_{Q_T}|(\vf_\vp)_t(t,x)|^2dtdx
		&=\int^T_0{}_{H^1}\!\!\left<	\vrho^\vp_\delta(t)\nabla\vf_\vp(t),
		\nabla(\vf_\vp)_t(t)\right>_{H^{-1}}dx
		+\int_{Q_T}f(t,x)(\vf_\vp)_t(t,x)dtdx\\
		&\le-\frac12\int_{Q_T}(\vrho_\delta)_t(t,x)
		|\nabla\vf_\vp(t,x)|^2dtdx
		+\int_{Q_T}f(t,x)(\vf_\vp)_t(t,x)dtdx.
		\end{align*}
		Since $(\vrho_\delta)_t\in L^\9(Q_T)$, this implies
		$$\int_{Q_T}|(\vf_\vp)_t|^2dtdx\le-\int_{Q_T}(\vrho_\delta)_t|\nabla\vf_\vp|^2dtdx+\int^T_0|f(t)|^2_{L^2(\rrd)}\,dt.$$
		By the definition of $\vrho_\delta$ we have 
\begin{equation}\label{e6.13}	-(\vrho_\delta)_t
\le C_2\delta^{-1}\vrho_\delta,\end{equation}		
		and hence we obtain	
		\begin{equation}\label{e6.14}
		\int_{Q_T}|(\vf_\vp)_t|^2dtdx\le\int_{Q_T}\vrho_\delta|\nabla\vf_\vp|^2dtdx+
		\int^T_0|f(t)|^2_{L^2(\rrd)}\,dt.\end{equation}	
		Then, by \eqref{e6.5} we have
		\begin{equation}\label{e6.15}
		\begin{array}{l}
		\dd\frac\pp{\pp t}\ \vf^\lbb_\vp+\divv((\vrho_\delta+\vp)\nabla\vf^\lbb_\vp)=f\eta_\lbb+\eta'_\lbb\vf_\vp\mbox{ on }Q_T \vspace*{1mm}\\
		\vf^\lbb_\vp(T,x)=0,\ \ff x\in\rrd.\end{array}
		\end{equation}
		By \eqref{e6.5}--\eqref{e6.9}  we have $\vf^\lbb_\vp\in W^{2,1}_2(Q_T)$ and  
		\begin{eqnarray}
		&\vf^\lbb_\vp\in L^2((0,T);H^2)\cap C([0,T];L^2),\ \dd\frac d{dt}\,\vf^\lbb_\vp\in L^2((0,T);L^2),\ \vf^\lbb_\vp\in L^\9(Q_T).\label{e6.16}	\end{eqnarray}
		By Remark \ref{r6.2}, $v$ satisfies \eqref{e*} for all $\vf\in C^\9_{0,b}([0,T)\times\rrd)$,    
		and $\nu=v(0,x)dx$ is the zero measure. Hence, we infer by density that $v$  satisfies \eqref{e*} also for all  functions $\vf=\vf^\lbb_\vp$ with properties \eqref{e6.16}. 
		Therefore,  we have
		\begin{equation}\label{e6.17} \int_{Q_T}v\left(\frac{\pp\vf^\lbb_\vp}{\pp t}+\divv(\vrho_\delta\nabla\vf^\lbb_\vp)\right)dtdx=0,\ \ff\vp,\lbb\in(0,1) 
		\end{equation}(see Appendix C for details). 
		
		Next, we get by \eqref{e6.15} the following equality
		$$\dd\frac12\,|\vf^\lbb_\vp(t)|^2_{L^2(\rrd)}+\vp\int^T_t|\nabla\vf^\lbb_\vp(s)|^2_{L^2(\rrd)}ds+\int^T_t\int_\rrd\vrho_\delta|\nabla\vf^\lbb_\vp|^2dsdx
		=-\dd\int^T_t\int_\rrd(f\eta_\lbb+\eta'_\lbb\vf_\vp)\vf^\lbb_\vp dtdx.$$
		Taking into account \eqref{e6.9}, we get
		\begin{equation}\label{e6.18}
		|\vf^\lbb_\vp(t)|^2_{L^2(\rrd)}+\vp\int^T_t
		|\nabla\vf^\lbb_\vp(s)|^2_{L^2(\rrd)}ds
		+\int_{Q_T}\vrho_\delta|\nabla\vf^\lbb_\vp|^2dtdx\le C_3,\ \ff\vp,\lbb\in(0,1).
		\end{equation}
		Now, we have as in \eqref{e6.11} that
		\begin{equation*}
		\int^T_0{}_{H^1}\left<\vrho^\vp_\delta(t)
		\nabla\vf^\lbb_\vp(t),
		\nabla(\vf^\lbb_\vp(t))_t\right>_{H^{-1}}\,dt
		=-\frac12\int_{Q_T}
		(\vrho_\delta)_t(t,x)
		|\nabla\vf^\lbb_\vp(t,x)|^2dtdx, 
		\end{equation*}
		and this yields
		\begin{equation*}
		\dd\int_{Q_T}|(\vf^\lbb_\vp)_t|^2dtdx
		\le-\int_{Q_T}(\vrho_\delta)_t
		|\nabla \vf^\lbb_\vp|^2 dtdx
		+C_4\dd\int_{Q_T}(|f\eta_\lbb|^2+
		|\eta'_\lbb\vf_\vp|^2) dtdx,\ \ff\vp\in(0,1). 
		\end{equation*}
		Then, by \eqref{e6.10}, \eqref{e6.13}  and \eqref{e6.18}  we get
		\begin{equation}\label{e6.19}
		\barr{r}
		\dd\int_{Q_T}|(\vf^\lbb_\vp)_t|^2dtdx
		\le\int_{Q_T}\vrho_\delta|\nabla\vf^\lbb_\vp|^2dtdx+
		C_4\int_{Q_T}
		(|f|^2+\lbb^{-1}|\vf_\vp|^2)dtdx\le \(1+\frac1\lbb\)C_5,\\ \ff\vp,\lbb\in(0,1).\earr\end{equation}
		(Here, we have denoted by $C_i$, $i=0,1,2,...$, several positive constants independent of $\vp,\lbb$.) Now, we fix a sequence $\lbb_n\to0$. 
		Hence, by \eqref{e6.13}, \eqref{e6.14}, \eqref{e6.18} and \eqref{e6.19} and a diagonal argument, we  find a subsequence $\vp_k\to0$ such that, for all $n\in\mathbb{N}$ as $k\to\9$, 
		\begin{equation}
		\label{e6.20}
		\left\{
		\begin{array}{rcll}
		\vf_{\vp_k}\ \to\ \vf,\ \vf^{\lbb_n}_{\vp_k}&\to&\vf^{\lbb_n}&\mbox{weakly in }L^2(Q_T)\medskip\\ 
		(\vf_{\vp_k})_t\ \to\ (\vf)_t,\ (\vf^{\lbb_n}_{\vp_k})_t&\to&(\vf^{\lbb_n})_t&\mbox{weakly in }L^2(Q_T)\medskip\\ 
		\divv((\vrho_\delta+\vp_k)\nabla\vf^{\lbb_n}_{\vp_k})&\to&\zeta_{\lbb_n}&\mbox{weakly in }L^2(Q_T).
		\end{array}\right.
		\end{equation}
		To finish the proof of Theorem \ref{t5.10}, we need the following three claims.\hfill$\Box$
		
		\begin{claim}\label{claim1} The maps $t\mapsto w_\delta(t)$ and $t\mapsto v_1(t)$ are weakly continuous from $[0,T]$ to $L^2(\rrd)$.\end{claim}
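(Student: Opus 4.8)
\textit{Proof strategy for Claim \ref{claim1}.} The plan is to prove the two continuity statements separately. For $w_\delta$ I expect to obtain strong — hence a fortiori weak — continuity in $L^2(\rrd)$, straight from the regularity of the Barenblatt profile. For $v_1$ the task is to upgrade the weak continuity of $t\mapsto v_1(t,x)dx$ as a curve of measures, which is built into the definition of $\Ascr_{\delta,T}$, to weak continuity of $t\mapsto v_1(t,\cdot)$ as a curve in $L^2(\rrd)$. This upgrade is available precisely because the set-wise domination $v_1(t,x)dx\le Cw_\delta(t,x)dx$ confines the functions $v_1(t,\cdot)$, uniformly in $t\in[0,T]$, to a fixed compact set with a uniform sup-bound, hence a uniform $L^2$-bound.

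First I would record, via Lemma \ref{l5.9}(i), that there exist a compact set $K\subseteq\rrd$ and a constant $M\in(0,\infty)$ such that $0\le w_\delta(t,x)\le M\mathds{1}_K(x)$ for all $(t,x)\in[0,T]\times\rrd$, and that $(t,x)\mapsto w_\delta(t,x)$ is continuous on $[0,T]\times\rrd$. If $t_n\to t$ in $[0,T]$, then $w_\delta(t_n,x)\to w_\delta(t,x)$ for every $x\in\rrd$, with the $t$-independent dominating function $M\mathds{1}_K\in L^2(\rrd)$, so dominated convergence yields $w_\delta(t_n,\cdot)\to w_\delta(t,\cdot)$ in $L^2(\rrd)$; in particular $t\mapsto w_\delta(t)$ is weakly continuous into $L^2(\rrd)$. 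Next I would note that for $v_1\in\Ascr_{\delta,T}$ the set-wise inequality $v_1(t,x)dx\le Cw_\delta(t,x)dx$, valid for all $t\in[0,T]$, gives $0\le v_1(t,\cdot)\le CM\mathds{1}_K$ $dx$-a.e., hence $\supp v_1(t)\subseteq K$ and $|v_1(t,\cdot)|_{L^2(\rrd)}\le CM|K|^{1/2}=:N$ for every $t\in[0,T]$; in particular each $v_1(t,\cdot)$ is a genuine element of $L^2(\rrd)$, so the statement is meaningful.

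Finally I would argue for $v_1$ by a standard subsequence principle. Fix $t_n\to t$ in $[0,T]$. Since $\sup_n|v_1(t_n,\cdot)|_{L^2(\rrd)}\le N$, any subsequence of $(v_1(t_n,\cdot))_n$ admits a further subsequence $(v_1(t_{n_j},\cdot))_j$ converging weakly in $L^2(\rrd)$ to some $h\in L^2(\rrd)$. For each $\psi\in C^\infty_0(\rrd)\subseteq L^2(\rrd)$ the weak $L^2$-convergence gives $\int_\rrd v_1(t_{n_j},x)\psi(x)\,dx\to\int_\rrd h\psi\,dx$, while the weak continuity of $t\mapsto v_1(t,x)dx$ as a $\calp$-valued curve on $[0,T]$ (built into $\Ascr_{\delta,T}$) gives $\int_\rrd v_1(t_{n_j},x)\psi(x)\,dx\to\int_\rrd v_1(t,x)\psi(x)\,dx$. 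Hence $\int_\rrd(h-v_1(t,\cdot))\psi\,dx=0$ for every $\psi\in C^\infty_0(\rrd)$, so $h=v_1(t,\cdot)$ $dx$-a.e. Since every subsequence of $(v_1(t_n,\cdot))_n$ therefore has a further subsequence converging weakly in $L^2(\rrd)$ to the common limit $v_1(t,\cdot)$, the whole sequence converges weakly in $L^2(\rrd)$ to $v_1(t,\cdot)$, which is the asserted weak continuity (the endpoints $t=0$ and $t=T$ being included). I do not expect a genuine obstacle here; the only point requiring care is the identification of the weak $L^2$-limit $h$ with $v_1(t,\cdot)$, i.e. the passage from weak convergence of the $v_1(t_n,\cdot)dx$ as measures to weak convergence of $v_1(t_n,\cdot)$ in $L^2(\rrd)$, and this is exactly where the uniform $L^2$-bound and the uniform compact support coming from the domination by $w_\delta$ enter.
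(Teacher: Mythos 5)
Your proof is correct and, for $v_1$, follows essentially the same route as the paper: a uniform $L^2$-bound gives weak sequential compactness of $\{v_1(t_n,\cdot)\}_n$, and the measure-valued weak continuity built into $\Ascr_{\delta,T}$ pins down every weak subsequential limit as $v_1(t,\cdot)$, forcing weak convergence of the whole sequence. Two minor variants from the paper: you obtain the uniform $L^2$-bound from the uniform compact support plus the $L^\infty$-bound, whereas the paper uses the $L^1$--$L^\infty$ interpolation $|f|_{L^2}^2\le |f|_{L^1}|f|_{L^\infty}$ (both are fine and both apply to $v_1$ via the domination $0\le v_1(t)\le Cw_\delta(t)$ a.e.); and for $w_\delta$ you upgrade to \emph{strong} $L^2$-continuity via dominated convergence using the joint continuity and compact support from Lemma~\ref{l5.9}(i), which is a slightly stronger conclusion than the paper's weak continuity obtained by running the same subsequence argument for $w_\delta$ as for $v_1$. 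Neither difference affects correctness, and the key step in both is identical.
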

			
		\medskip\noindent{\it Proof of Claim {\rm\ref{claim1}}.} Since $\sup\limits_{t\in[0,T]}\{|w_\delta(t)|_{L^1(\rrd)}, |w_\delta(t)|_{L^\9(\rrd)}\}<\9$, also $\sup\limits_{t\in[0,T]}\{|w_\delta(t)|_{L^2(\rrd)}\}<\9.$ Hence, for every sequence $\{t_n\}_{n\in\mathbb{N}}\subseteq[0,T]$ with limit $t\in[0,T]$ there is a subsequence $\{t_{n_k}\}_{k\in\mathbb{N}}$ such that $w_\delta(t_{n_k})$ has a weak limit in $L^2(\rrd)$.   Due to the weak continuity (in the sense of measures) of $t\mapsto w_\delta(t,x)dx$, this limit is $w_\delta(t)$, which implies the $L^2(\rrd)$-weak continuity of $[0,T]\ni t\mapsto w_\delta(t).$ The same argument applies to $v_1$.\hfill$\Box$
		
			\begin{claim}\label{claim2} {\it We have}
			\begin{equation}\label{e6.21}
			\lim_{n\to\9}\ \lim_{k\to\9}
			\int_{Q_T} v\eta'_{\lbb_n}\vf_{\vp_k}\,dtdx=0.
			\end{equation}
		\end{claim}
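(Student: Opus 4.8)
The plan is to separate the iterated limit: handle the inner limit $k\to\9$ by the weak $L^2(Q_T)$-convergence recorded in \eqref{e6.20}, thereby reducing the claim to a statement about a single scalar function of $t$, and then handle $n\to\9$ by exploiting that $\eta'_{\lbb_n}$ concentrates near the endpoints $t=0$ and $t=T$, where that scalar function vanishes. Throughout, set $g(t):=\int_{\rrd}v(t,x)\vf(t,x)\,dx$.

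\emph{Step 1 (inner limit).} Fix $n$. By Remark \ref{r5.10}, $v=w_\delta-v_1=(1-\eta)w_\delta$ with $0\le\eta\le C$, so $v\in L^\9(Q_T)$ by Lemma \ref{l5.9}(i), and $v(t,\cdot)$ is supported in a fixed ball for $t\in(0,T)$; since also $\eta'_{\lbb_n}\in L^\9((0,T))$, the product $v\,\eta'_{\lbb_n}$ lies in $L^2(Q_T)$. Hence, using $\vf_{\vp_k}\rightharpoonup\vf$ weakly in $L^2(Q_T)$ from \eqref{e6.20},
$$\lim_{k\to\9}\int_{Q_T}v\,\eta'_{\lbb_n}\,\vf_{\vp_k}\,dtdx=\int_{Q_T}v\,\eta'_{\lbb_n}\,\vf\,dtdx=\int_0^T\eta'_{\lbb_n}(t)\,g(t)\,dt,$$
so it remains to prove $\int_0^T\eta'_{\lbb_n}\,g\,dt\to 0$ as $n\to\9$.

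\emph{Step 2 ($g$ is continuous on $[0,T]$ with $g(0)=g(T)=0$).} By \eqref{e6.14} together with \eqref{e6.10}, the family $\{(\vf_\vp)_t\}_\vp$ is bounded in $L^2(Q_T)$, so \eqref{e6.20} yields $\vf,(\vf)_t\in L^2(Q_T)$, i.e. $\vf\in H^1((0,T);L^2(\rrd))\hookrightarrow C([0,T];L^2(\rrd))$. Moreover, writing $\vf_\vp(t)=-\int_t^T(\vf_\vp)_s\,ds$ (legitimate by \eqref{e6.8} and the terminal condition in \eqref{e6.5}) and passing to the limit along $\vp_k$ gives $\vf(t)=-\int_t^T(\vf)_s\,ds$ for a.e.\ $t$, so the continuous representative of $\vf$ satisfies $\vf(T)=0$. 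On the other hand, $v=w_\delta-v_1$ with $t\mapsto w_\delta(t)$ and $t\mapsto v_1(t)$ weakly continuous from $[0,T]$ to $L^2(\rrd)$ (Claim \ref{claim1}), $\sup_{t\in[0,T]}|v(t)|_{L^2(\rrd)}<\9$ (since $w_\delta,v_1\in L^\9(Q_T)$ have uniformly compact spatial supports on $(0,T)$), and $v(0)=0$ (both $w_\delta$ and $v_1$ extend weakly continuously to $t=0$ with the same value, namely the $L^2$-function $w^0(\delta,\cdot)$). A standard weak-strong pairing argument (for $t_m\to t$, write $\langle v(t_m),\vf(t_m)\rangle=\langle v(t_m),\vf(t)\rangle+\langle v(t_m),\vf(t_m)-\vf(t)\rangle$ and use weak convergence of $v(t_m)$, boundedness of $|v(t_m)|_{L^2}$, and strong convergence of $\vf(t_m)$) then shows $g\in C([0,T])$; finally $g(0)=0$ since $v(0)=0$, and $g(T)=0$ since $\vf(T)=0$.

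\emph{Step 3 (concentration).} For $n$ large enough that the two intervals below are disjoint and contained in $(0,T)$, $\eta'_{\lbb_n}$ is supported in $(\lbb_n,2\lbb_n)\cup(T-2\lbb_n,T-\lbb_n)$, equals $\lbb_n^{-1}\eta'(\cdot/\lbb_n)$ on the first interval and $-\lbb_n^{-1}\eta'((T-\cdot)/\lbb_n)$ on the second, whence $\int_{\lbb_n}^{2\lbb_n}\eta'_{\lbb_n}\,dt=1$, $\int_{T-2\lbb_n}^{T-\lbb_n}\eta'_{\lbb_n}\,dt=-1$, and $\int_0^T|\eta'_{\lbb_n}|\,dt=2\,|\eta'|_{L^1(\rr)}=:2M$. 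Using $g(0)=g(T)=0$,
$$\int_0^T\eta'_{\lbb_n}\,g\,dt=\int_{\lbb_n}^{2\lbb_n}\eta'_{\lbb_n}\,(g-g(0))\,dt+\int_{T-2\lbb_n}^{T-\lbb_n}\eta'_{\lbb_n}\,(g-g(T))\,dt,$$
which is bounded in absolute value by $M\big(\sup_{[0,2\lbb_n]}|g-g(0)|+\sup_{[T-2\lbb_n,T]}|g-g(T)|\big)$, and this tends to $0$ as $n\to\9$ by continuity of $g$ at $0$ and at $T$. Together with Step 1 this proves Claim \ref{claim2}.

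I expect the main obstacle to be Step 2: verifying that the \emph{weak} $L^2(Q_T)$-limit $\vf$ admits a continuous $L^2(\rrd)$-valued representative with $\vf(T)=0$, and that the merely weakly continuous curve $t\mapsto v(t)$ pairs continuously with $t\mapsto\vf(t)$ (for which the uniform bound $\sup_t|v(t)|_{L^2}<\9$ is essential). Once these facts are in place, the concentration estimate in Step 3 is entirely routine.
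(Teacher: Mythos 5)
Your proof is correct and follows essentially the same approach as the paper: pass the inner limit by the weak $L^2(Q_T)$-convergence from \eqref{e6.20}, establish that $\vf\in C([0,T];L^2)$ with $\vf(T)=0$ and that $t\mapsto\int_{\rrd}v(t)\vf(t)\,dx$ is continuous on $[0,T]$ vanishing at both endpoints (via Claim \ref{claim1} and weak-strong pairing), and then exploit that $\eta'_{\lbb_n}$ concentrates near $\{0,T\}$ with uniformly bounded $L^1$-norm. The only cosmetic difference is that you bound by $\sup|g|$ near the endpoints, while the paper rescales $\tau=t/\lbb$ (resp.\ $\tau=(T-t)/\lbb$) and invokes dominated convergence — the two are interchangeable — and you spell out the justification of $\vf(T)=0$ in more detail than the paper does.
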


		\noindent{\it Proof of Claim {\rm\ref{claim2}}.} We first note that by \eqref{e6.20} for every $n\in\mathbb{N}$
		$$\lim_{k\to\9}\int_{Q_T}  v\eta'_{\lbb_n}\vf_{\vp_k}\,dtdx
		=\int_{Q_T}  v\eta'_{\lbb_n}\vf\,dtdx,$$
		and that $\vf\in C([0,T];L^2)$. Furthermore,  by \eqref{e6.10}, $\vf(T,\cdot)=0$. Then, for every $\lbb\in(0,1)$,
		$$\begin{array}{lcl}
		\dd\int_{Q_T}  v\eta'_\lbb\vf\,dtdx
		&=&\dd\frac1\lbb\int^{2\lbb}_\lbb\eta'
		\left(\frac t\lbb\right)
		\int_{\rr^d}  v(t,x)\vf(t,x)dtdx
		+\dd\frac1\lbb\int^{T-\lbb}_{T-2\lbb}\eta'\left(\frac{T-t}\lbb\right)\int_{\rr^d}  v(t,x)\vf(t,x)dx\smallskip\\
		&=&\dd\dd\int^2_1\eta'(\tau)d\tau\int_{\rr^d}  v(\lbb\tau,x)\vf(\lbb\tau,x)dx
		+\dd\int^2_1\eta'(\tau)d\tau\int_{\rr^d}   v(T-\lbb\tau,x)\vf(T-\lbb\tau,x)dx,
		\end{array}$$where, as $\lbb\to0$,  both terms converge to zero by Claim \ref{claim1}  and Lebesgue's dominated convergence theorem.\hfill$\Box$ 
		
		\begin{claim}\label{claim3} There is $\alpha\in(0,1)$ such that for every $\lbb\in(0,1)$ there exists $C_\lbb\in(0,\9)$ such that
			\begin{equation}\label{e6.22}
			\int_{Q_T}\vp|\Delta\vf^\lbb_\vp|^{1+\alpha}w_\delta\,dtdx\le C_\lbb,\ \ff\vp\in(0,1).\end{equation}
			Hence, $\{\vp^{\frac1{1+\alpha}}\,\Delta\vf^\lbb_\vp\mid\vp\in(0,1)\}$ is equi-integrable, hence weakly relatively compact in $L^{1}(Q_T;w_\delta\,dtdx)$. 
			Therefore, selecting another subsequence  if necessary, for every $n\in\mathbb{N}$ as $k\to\9$, we find
			\begin{equation}\label{e6.23}
			\vp_k\Delta\vf^{\lbb_n}_{\vp_k}\to0\mbox{ weakly both in }L^1(Q_T;w_\delta\,dtdx)\mbox{ and }L^1(Q_T;\eta w_\delta\,dtdx).\end{equation}\end{claim}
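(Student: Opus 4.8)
The plan is to read off \eqref{e6.22} from the equation \eqref{e6.15} satisfied by $\vf^\lbb_\vp$, the $\vp$-uniform bounds \eqref{e6.9}, \eqref{e6.18}, \eqref{e6.19} already established, and the explicit formulae \eqref{e4.4}, \eqref{e4.4a}, \eqref{e4.5}. Expanding $\divv((\vrho_\delta+\vp)\nabla\vf^\lbb_\vp)=(\vrho_\delta+\vp)\Delta\vf^\lbb_\vp+\nabla\vrho_\delta\cdot\nabla\vf^\lbb_\vp$ in \eqref{e6.15} gives, $dtdx$-a.e. on $Q_T$,
$$(\vrho_\delta+\vp)\,\Delta\vf^\lbb_\vp=G^\lbb_\vp,\qquad G^\lbb_\vp:=f\eta_\lbb+\eta'_\lbb\vf_\vp-(\vf^\lbb_\vp)_t-\nabla\vrho_\delta\cdot\nabla\vf^\lbb_\vp.$$
Since $\frac{\vp}{(\vrho_\delta+\vp)^{1+\alpha}}=\frac{\vp}{\vrho_\delta+\vp}\,(\vrho_\delta+\vp)^{-\alpha}\le\vrho_\delta^{-\alpha}$ for every $\alpha\in(0,1)$ (the bound being trivial where $\vrho_\delta$ vanishes), this yields
$$\int_{Q_T}\vp\,|\Delta\vf^\lbb_\vp|^{1+\alpha}w_\delta\,dtdx\le\int_{Q_T}\vrho_\delta^{-\alpha}\,w_\delta\,|G^\lbb_\vp|^{1+\alpha}\,dtdx,$$
and the task is reduced to bounding the right side uniformly in $\vp$ for a suitably small fixed $\alpha=\alpha(p,d)\in(0,1)$.

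To that end I would split $|G^\lbb_\vp|^{1+\alpha}\le 3^{\alpha}\big(|f\eta_\lbb+\eta'_\lbb\vf_\vp|^{1+\alpha}+|(\vf^\lbb_\vp)_t|^{1+\alpha}+|\nabla\vrho_\delta|^{1+\alpha}|\nabla\vf^\lbb_\vp|^{1+\alpha}\big)$ and treat the three parts separately. The first is $\le C_\lbb\int_{Q_T}\vrho_\delta^{-\alpha}w_\delta\,dtdx$, since $f\eta_\lbb+\eta'_\lbb\vf_\vp$ is bounded uniformly in $\vp$ by \eqref{e6.9} (with a constant depending on $\lbb$). For the second, Hölder's inequality with exponents $\tfrac{2}{1+\alpha},\tfrac{2}{1-\alpha}$ combined with \eqref{e6.19} bounds it by $C_\lbb\big(\int_{Q_T}\vrho_\delta^{-\frac{2\alpha}{1-\alpha}}w_\delta^{\frac{2}{1-\alpha}}\,dtdx\big)^{\frac{1-\alpha}{2}}$. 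For the third, writing $\vrho_\delta^{-\alpha}w_\delta|\nabla\vrho_\delta|^{1+\alpha}|\nabla\vf^\lbb_\vp|^{1+\alpha}=\big(\vrho_\delta|\nabla\vf^\lbb_\vp|^2\big)^{\frac{1+\alpha}{2}}\cdot\vrho_\delta^{-\frac{3\alpha+1}{2}}w_\delta|\nabla\vrho_\delta|^{1+\alpha}$ and applying Hölder with the same exponents together with $\int_{Q_T}\vrho_\delta|\nabla\vf^\lbb_\vp|^2\,dtdx\le C$ from \eqref{e6.18}, it is $\le C\big(\int_{Q_T}\vrho_\delta^{-\frac{3\alpha+1}{1-\alpha}}w_\delta^{\frac{2}{1-\alpha}}|\nabla\vrho_\delta|^{\frac{2(1+\alpha)}{1-\alpha}}\,dtdx\big)^{\frac{1-\alpha}{2}}$. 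Hence \eqref{e6.22} follows as soon as the three $\vp$-independent weighted integrals
$$\int_{Q_T}\vrho_\delta^{-\alpha}w_\delta,\qquad \int_{Q_T}\vrho_\delta^{-\frac{2\alpha}{1-\alpha}}w_\delta^{\frac{2}{1-\alpha}},\qquad \int_{Q_T}\vrho_\delta^{-\frac{3\alpha+1}{1-\alpha}}w_\delta^{\frac{2}{1-\alpha}}|\nabla\vrho_\delta|^{\frac{2(1+\alpha)}{1-\alpha}}$$
are shown to be finite.

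The decisive step is to verify these three finiteness conditions from the explicit expressions for $w_\delta=w^0(\cdot+\delta,\cdot)$, $\vrho_\delta$ and $\nabla\vrho_\delta$ (see \eqref{e1.18}, \eqref{e4.4}, \eqref{e4.4a}, \eqref{e4.5}, with $t$ replaced by $t+\delta$). On the compact interval $[0,T]$ all powers of $t+\delta$ are bounded above and below, so the only singularities of the integrands sit at $x=0$ and on the free boundary $\{|x|=\beta(t+\delta)^{k/d}\}$. Near the free boundary, with $s:=\big(C_1-q(t+\delta)^{-\frac{kp}{d(p-1)}}|x|^{\frac{p}{p-1}}\big)_+\downarrow0$, one has $w_\delta\sim s^{\frac{p-1}{p-2}}$, $\vrho_\delta\sim s$ and $|\nabla\vrho_\delta|$ bounded, so since $\tfrac{p-1}{p-2}>1$ each of the three integrands is $O(s^{\gamma})$ with $\gamma>-1$, hence integrable, for every $\alpha<1$. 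Near $x=0$ one has $w_\delta$ bounded away from $0$ and $\infty$, $\vrho_\delta\sim|x|^{\frac{p-2}{p-1}}$ and $|\nabla\vrho_\delta|\lesssim|x|^{-\frac{1}{p-1}}$; the most restrictive of the three is the last integrand, which at $\alpha=0$ behaves like $|x|^{-\frac{p}{p-1}}$ and is locally integrable on $\R^d$ precisely because $\tfrac{p}{p-1}<2\le d$ (using $d\ge2$, $p>2$). By continuity in $\alpha$ all three integrals remain finite for $\alpha>0$ small enough; fixing one such $\alpha$ gives \eqref{e6.22}.

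Finally, since $w_\delta\,dtdx$ is a finite measure on $Q_T$ (indeed $\int_{Q_T}w_\delta\,dtdx=T$ because $|w^0(r)|_{L^1(\rrd)}=1$ for all $r>0$), the bound \eqref{e6.22} makes $\{\vp^{1/(1+\alpha)}\Delta\vf^\lbb_\vp:\vp\in(0,1)\}$ bounded in $L^{1+\alpha}(Q_T;w_\delta\,dtdx)$, hence uniformly integrable and, by the Dunford--Pettis theorem, weakly relatively compact in $L^1(Q_T;w_\delta\,dtdx)$; passing to a further subsequence of the $\vp_k$ from \eqref{e6.20} if necessary and writing $\vp_k\Delta\vf^{\lbb_n}_{\vp_k}=\vp_k^{\alpha/(1+\alpha)}\big(\vp_k^{1/(1+\alpha)}\Delta\vf^{\lbb_n}_{\vp_k}\big)$, the scalar factor tends to $0$ while the second factor stays bounded in $L^{1+\alpha}(Q_T;w_\delta\,dtdx)$, so by Hölder $\vp_k\Delta\vf^{\lbb_n}_{\vp_k}\to0$ in $L^1(Q_T;w_\delta\,dtdx)$, a fortiori weakly, and since $0\le\eta\le C$ also in $L^1(Q_T;\eta w_\delta\,dtdx)$ — which is \eqref{e6.23}. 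I expect the main obstacle to be the explicit book-keeping in the third step: pinning down the exact decay/blow-up rates of each of the three integrands near the free boundary (in $s$) and near the origin (in $|x|$), and checking that $d\ge2$, $p>2$ (a fortiori $p>2(1+\tfrac1d)$) suffices for the borderline case, the $\nabla\vrho_\delta$-term at $x=0$.
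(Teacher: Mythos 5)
Your proposal is correct, but it takes a genuinely different route than the paper's proof of \eqref{e6.22}. The paper multiplies \eqref{e6.15} pointwise by $(\textup{sign}\,\Delta\vf^\lbb_\vp)\,|\Delta\vf^\lbb_\vp|^\alpha w_\delta$, integrates over $Q_T$ to arrive at \eqref{e6.27}, and then applies Young's inequality with a small parameter $r\in(0,1)$ to absorb the $\vrho_\delta|\Delta\vf^\lbb_\vp|^{1+\alpha}w_\delta$ contributions from the right-hand side back into the left-hand side; the remaining $\vp$-independent weighted integrals are shown to be finite by introducing an auxiliary exponent $s\in\(\tfrac{2d(p-1)}{p},d(p-1)\)$ together with the constraints \eqref{e6.25}, \eqref{e6.26} on $\alpha$. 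You instead isolate $\Delta\vf^\lbb_\vp=G^\lbb_\vp/(\vrho_\delta+\vp)$ algebraically and invoke the elementary pointwise bound $\vp/(\vrho_\delta+\vp)^{1+\alpha}\le\vrho_\delta^{-\alpha}$, which replaces the Young-absorption step altogether; after H\"older combined with the $\vp$-uniform bounds \eqref{e6.18}, \eqref{e6.19} (for fixed $\lbb$), you land on $\vp$-independent weighted integrals of essentially the same shape, and you check their finiteness directly from the explicit local asymptotics of $w_\delta$, $\vrho_\delta$, $\nabla\vrho_\delta$ near $x=0$ and near the free boundary rather than through the auxiliary exponent $s$. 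Both routes are correct, both only require $d\ge2$ and $p>2$ for this particular claim, and both reduce to the same borderline computation, namely $\tfrac{p}{p-1}<d$ at $x=0$. A further small difference: you observe that \eqref{e6.23} in fact holds as \emph{strong} $L^1$-convergence (since $\vp_k\Delta\vf^{\lbb_n}_{\vp_k}=\vp_k^{\alpha/(1+\alpha)}\cdot\vp_k^{1/(1+\alpha)}\Delta\vf^{\lbb_n}_{\vp_k}$ and the second factor is bounded in $L^{1+\alpha}(w_\delta\,dtdx)$), which renders the de la Vall\'ee Poussin/Dunford--Pettis compactness argument and the additional subsequence extraction unnecessary, whereas the paper only records weak convergence.
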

		
		\noindent{\it Proof of Claim} \ref{claim3}. By the de la   Vall\'ee Poussin theorem and a diagonal argument, \eqref{e6.23} follows from \eqref{e6.22},   so  we only have to prove \eqref{e6.22}. To this purpose, fix
		\begin{equation}\label{e6.24}
		s\in\(\frac{2d(p-1)}{p},d(p-1)\).\end{equation}
		We note that this interval is not empty and that its left boundary point is strictly bigger than $2$, since $d\ge2$, $p>2$. Then,
		$$\frac{p-2}{p-1}\ \frac s{s-2}<d,$$and so we may choose $\alpha\in\(0,\frac13\)$ such that
		\begin{equation}\label{e6.25}
		0<\frac{p-2}{p-1}\ \frac{(3\alpha+1)s}{s(1-\alpha)-2(\alpha+1)}<d\end{equation}
		and also that
		\begin{equation}\label{e6.26}
		\alpha<\frac{d(p-1)}{(d+2)(p-2)+d}.\end{equation}
		Then, we multiply \eqref{e6.15}  by $sign\ \Delta\vf^\lbb_\vp|\Delta\vf^\lbb_\vp|^\alpha w_\delta$ and integrate over  $Q_T$ to obtain after rearranging
		\begin{equation}\label{e6.27}
		\barr{l}
		\dd\int_{Q_T}(\vrho_\delta+\vp)|\Delta\vf^\lbb_\vp|^{1+\alpha}w_\delta\,dtdx
		\le
		\dd\int_{Q_T}|\nabla\vrho_\delta|\,
		|\nabla\vf^\lbb_\vp|\,|\Delta\vf^\lbb_\vp|^\alpha\,w_\delta\,dtdx\vspace*{1,5mm}\\
		\qquad+
		\dd\int_{Q_T}|(\vf^\lbb_\vp)_t|\,
		|\Delta\vf^\lbb_\vp|^\alpha\,w_\delta\,dtdx
		+\dd\int_{Q_T}|f\eta_\lbb+\eta'_\lbb\vf_\vp|\,
		|\Delta\vf^\lbb_\vp|^\alpha\, w_\delta\,dtdx.\earr\end{equation}
		We point out that by \eqref{e6.16} the right hand side  of \eqref{e6.27} is finite, hence so is its left hand side. The second term on the right hand side  of \eqref{e6.27} can be estimated by
		\begin{equation}\label{e6.28}
		|(\vf^\lbb_\vp)_t|^2_{L^2(Q_T)}+r\int_{Q_T}|\Delta\vf^\lbb_\vp|^{1+\alpha}\vrho_\delta\,w_\delta\,dtdx
		+C_r\int_{Q_T}w^{\frac2{1-\alpha}}_\delta\,\vrho^{-\frac{2\alpha}{1-\alpha}}_\delta\,dtdx.
		\end{equation}
		Likewise, we can estimate the last term in \eqref{e6.27} by
		\begin{equation}\label{e6.29}
		|f\eta_\lbb+\eta'_\lbb\vf_\vp|^2_{L^2(Q_T)}
		+r\int_{Q_T}|\Delta\vf^\lbb_\vp|^{1+\alpha}\,\vrho_\delta\,w_\delta\,dtdx
		+C_r\int_{Q_T}w_\delta^{\frac2{1-\alpha}}\,\vrho^{-\frac{2\alpha}{1-\alpha}}_\delta\,dtdx,
		\end{equation}
		where $r\in(0,1)$ and $C_r$ is a large enough constant 
		independent of $\vp,\lbb$. 
		We note that, for $\alpha\in\(0,\frac13\)$ satisfying \eqref{e6.26}, the definition of $\omega_\delta$ and $\vrho_\delta$ implies that the last terms 
		in \eqref{e6.28} and \eqref{e6.29} are finite. Furthermore, by \eqref{e6.19}   the respective first terms in \eqref{e6.28}, \eqref{e6.29} are uniformly bounded in $\vp\in(0,1)$. Finally, the first term on the right hand side  of \eqref{e6.27}   can be estimated by
		$$r\int_{Q_T}|\Delta\vf^\lbb_\vp|^{\alpha+1}\vrho_\delta\,w_\delta\,dtdx+
		C_r\int_{Q_T}\frac{|\nabla\vrho_\delta|^{\alpha+1}}{\vrho^\alpha_\delta}|\nabla\vf^\lbb_\vp|^{\alpha+1}\,w_\delta\,dx,$$for $r\in(0,1)$ and $C_r$ as above, where the second integral is up to a constant bounded by
		$$\int_{Q_T}|\nabla\vrho_\delta|^{\frac{2(\alpha+1)}{1-\alpha}}\,\vrho^{-\frac{3\alpha+1}{1-\alpha}}_\delta\,w^{\frac2{1-\alpha}}_\delta\,dtdx+\int_{Q_T}|\nabla\vf^\lbb_\vp|^2\vrho_\delta\,dtdx$$
		of which the second integral is uniformly bounded in $\lbb,\vp\in(0,1)$ by \eqref{e6.18}  and the first integral is up to a constant bounded by
		$$\int_{Q_T}|\nabla\vrho_\delta|^s\,dtdx+\int_{Q_T}w^{\frac{2s}{s(1-\alpha)-2(\alpha+1)}}_\delta\,
		\vrho^{-\frac{(3\alpha+1)s}{s(1-\alpha)-2(\alpha+1)}}_\delta\,dtdx,$$
		where by   \eqref{e6.24} and  \eqref{e6.25} 
		this quantity is finite. Choosing $r$ small enough, we hence get from \eqref{e6.27} and the nonnegative of $\vrho_\delta$ the estimate \eqref{e6.22}, and Claim \ref{claim3} follows.\hfill$\Box$\bigskip
		
		Now, we can  finish the proof of Theorem  \ref{t5.10}.
		We have by Claim \ref{claim2} and \eqref{e6.15}
		$$
		\barr{rcl}
		\dd\int_{Q_T}fv\,dtdx&=&\dd\lim_{n\to\9}\ \lim_{k\to\9}
		\int_{Q_T}(f\eta_{\lbb_n}+
		\eta'_{\lbb_n}\vf_{\vp_k})v\,dtdx
		\vspace*{1,5mm}\\
		&=&\dd\lim_{n\to\9}\ \lim_{k\to\9}\int_{Q_T}
		\(\frac d{dt}\ \vf^{\lbb_n}_{\vp_k}
		+{\rm div}((\vrho_\delta+\vp_k)\nabla\vf^{\lbb_n}_{\vp_k})\)v\,dtdx,\earr$$
		which, taking into account that $v=(1-\eta)w_\delta$,  by Claim \ref{claim3} is equal to
		$$\lim_{n\to\9}\ \lim_{k\to\9}\int_{Q_T}
		\(\frac \pp{\pp t}\ \vf^{\lbb_n}_{\vp_k}
		+{\rm div}\(\vrho_\delta\nabla\vf^{\lbb_n}_{\vp_k}\)\)v\,dtdx,$$
		which in turn equals zero by \eqref{e6.17}. Hence, $\int_{Q_T}vf\,dtdx=0$ and so, because $f\in C^\9_0(Q_T)$ was arbitrary, the assertion follows.
	\end{proof}

	\appendix
	\section{Details to proof of Theorem \ref{t4.2}} 
	
	Here we give more details on some assertions made in the proof of Theorem \ref{t4.2}. Below, we denote by $\lesssim$ an inequality where we supress a multiplicative constant on the right hand side which only depends on $d,p$ and $T$. First, by definition of $k = (p-2+\frac p d)^{-1}$, we have
	\begin{align*}
	-k(p-2)(1+ \frac{p}{d(p-1)}) > -1 \iff p-2 + \frac{p(p-2)}{d(p-1)} < p-2 + \frac p d  \iff p-2 < p-1.
	\end{align*}
	So, since $\frac{k}{d}\frac{p-2}{p-1} >0$, also $ -k(p-2)(1+\frac{p}{d(p-1)}) + \frac{k}{d}\frac{p-2}{p-1} >-1 $. 	
	Next, 
	$\nabla |\nabla w_0(t)|^{p-2} \in L^1_{\textup{loc}}([0,\infty);L^1_{\textup{loc}}(\R^d)$ follows from \eqref{e4.5}. 
	Indeed, $|x|^{-\frac 1{p-1}} \in L^1_{\textup{loc}}(\R^d)$ and both exponents of $t$  appearing on the right hand side of \eqref{e4.5} are greater than $-1$. For the first exponent, this was already shown above; for the second, we estimate ${\bf1}_{|x|\le\beta t^{\frac kd}}|x|$ by $\beta t^{\frac kd}$ and  have
	\begin{align*}
	& -k(p-2)(1+ \frac{p}{d(p-1)}) - \frac{kp}{d(p-1)}   +  \frac k d > -1  \iff p - 2 + \frac{p(p-2)}{d(p-1)} + \frac{p}{d(p-1)} - \frac 1 d < p-2 + \frac p d
	\\&  \frac{p(p-2) +p}{p-1} < p+1 \iff p < p+1.
	\end{align*}
	Finally, we give details regarding $\nabla(|\nabla w_0|^{p-2})w_0 \in L^1_{\textup{loc}}([0,\infty);L^1(\R^d))$. Let $T>0$. We have
	\begin{align}
\label{aux890}
	\int_0^T \int_{\R^d}& |\nabla(|\nabla w_0|^{p-2})| w_0 \,dx dt
	\\& \notag\le \int_0^T t^{-k(p-2)(1+ \frac{p}{d(p-1)})} \bigg(\int_{\{|x| \le \beta t^{\frac k d}\}}  |x|^{-\frac{1}{p-1}} w_0 dx    +     t^{-\frac{kp}{d(p-1)}}\int_{\{|x|\le
		\beta t^{\frac k d}\}}  |x|\,w_0\,dx\bigg) \, dt
	\\& \notag\le \int_0^T t^{-k(p-2)(1+ \frac{p}{d(p-1)})} \bigg(  t^{-k}\int_{\{|x| \le \beta t^{\frac k d}\}} |x|^{-\frac{1}{p-1}}\,dx +  t^{-\frac{kp}{d(p-1)}+\frac k d} \int_{\R^d}w_0  \,dx \bigg) dt ,
	\end{align}
	where for the second inequality we used $w_0(t,x) \lesssim t^{-k}$. Note
	\begin{align*}
	\int_{\{|x| \le  \beta t^{\frac k d}\}} |x|^{-\frac{1}{p-1}}\,dx  \leq C_t \int_0^{\beta t^{\frac k d}} r^{-\frac{1}{p-1} +d - 1} dr = C(T) t^{\frac{k(d(p-1)-1)}{d(p-1)}},
	\end{align*}
	where $C_t$ is a $t$-dependent constant related to the surface area of the $d$-dimensional ball with radius $\beta t^{\frac k d}$. Since $d$ is fixed, $C(T):= \sup\limits_{t\in (0,T)}C_t$ is finite. Since we find
	\begin{align*}
	&-k(p-2)(1+ \frac{p}{d(p-1)}) + \frac{k(d(p-1)-p)}{d(p-1)}-k > -1 \iff  p-2 + \frac{p(p-2)}{d(p-1)}  + \frac{1}{d(p-1)}  < p-2 + \frac p d
	\\& \frac{p(p-2)+1}{p-1}< p \iff 1-p < 0,
	\end{align*}
	the first integral term on the right hand side of \eqref{aux890} is hence finite. Regarding the second integral, we observe, similarly as above, 
	$$ -k(p-2)(1+ \frac{p}{d(p-1)})    -\frac{kp}{d(p-1)}+\frac k d>-1 \iff 1-p < 0,$$
	hence the finiteness of the second integral follows from $\int_{\R^d} w_0(t,x)dx = 1$ for all $t >0$.
	Consequently, the right hand side  of \eqref{aux890} is finite for every $T>0$ and we are done.
	
	\section{Proof of Remark \ref{r6.2}}

	Clearly, $C^\9_0([0,T)\times\rrd)$ is dense in $C^{\9}_{0,b}([0,T)\times\rrd)$ (defined in Remark \ref{r6.2}) with respect to  uniform convergence of all partial derivatives (including zero derivatives) on $[0,T)\times\rrd$. Let $\vf\in C^\9_{0,b}([0,T)\times\rrd)$ and $(\vf_k)_{k\in\mathbb{N}}\subseteq C^\9_0([0,T)\times\rrd)$, such that $\vf_k\to\vf$ as $k\to\9$  in the above sense. Then, in particular,
	\begin{itemize}
		\item[(i)] $\vf_k(0)\to\vf(0)$ uniformly on $\rrd$.
		\item[(ii)] $(\vf_k)_t\to\vf_t,\ \nabla\vf_k\to\nabla\vf,\ \Delta\vf_k\to\Delta\vf$ uniformly on $[0,T)\times\rrd,$ where we abbreviate $\frac d{dt}\,\vf$ by $\vf_t$.
	\end{itemize}Since $v\in L^1(Q_T)\cap L^\9(Q_T)$ and $v_0\in L^1(\rrd)$,  $\vrho_\delta\in L^\9$, $\nabla\vrho_\delta\in L^2(Q_T)$, (i)--(ii) yield
	$$0=\lim_{k}
	\left(\int_{Q_T}\!\!\!v((\vf_k)_t+{\rm div}(\vrho_\delta\nabla\vf_k))dtdx+\int_\rrd\!\!\!\vf_k(0)v_0dx\right)
	=\int_{Q_T}\!\!\!v(\vf_t+{\rm div}(\vrho_\delta\nabla\vf))dtdx+\int_\rrd\!\!\!\vf(0)v_0dx,
	$$ which proves  Remark \ref{r6.2}.
	
	\section{Details for \eqref{e6.17}}   	
	
	We know that $\vf^\lbb_\vp\in W^{2,1}_2(Q_T)$. It is standard that $C^\9_{0,b}([0,T)\times\rrd)$ (as introduced in Remark \ref{r6.2}) is dense in $W^{2,1}_2(Q_T)\cap\{g\in W^{2,1}_2(Q_T):g(T)=0\}$ with respect to  the usual norm
	$$(|g|^{2,1}_2)^2:=|g|^2_{L^2(Q_T)}+|g_t|^2_{L^2(Q_T)}+|\nabla g|^2_{L^2(Q_T)}+|\Delta g|^2_{L^2(Q_T)}.$$
	Now, let $(\vf_k)_{k\in\mathbb{N}}\subseteq C^\9_{0,b}([0,T)\times\rrd)$ such that  $\vf_k\stackrel k\to\vf^\lbb_\vp$ with respect to  this norm.	Since $v_0\equiv0$ and since $v\in L^2(Q_T)$, $\vrho_\delta\in L^\9(Q_T)$, $\nabla\vrho_\delta\in L^2(Q_T),$ $v\in L^\9(Q_T)$, and ${\rm div}(\vrho_\delta\nabla\vf^\lbb_\vp)=\vrho_\delta\Delta\vf^\lbb_\vp+\nabla\vrho_\delta\cdot\nabla\vf^\lbb_\vp,$ we deduce
	$$0=\lim_k\int_{Q_T}v((\vf_k)_t+{\rm div}(\vrho_\delta\nabla\vf_k))dtdx=\int_{Q_T}v((\vf^\lbb_\vp)_t+{\rm div}(\vrho_\delta\nabla\vf^\lbb_\vp))dtdx,$$which proves \eqref{e6.17}.

	\section{Linear, linearized and  nonlinear \FP\ equations} 

Here we recall of linear and nonlinear \FP\ equations and their standard notion of distributional solution. The linear \FP\ equation associated with measurable coefficients $a_{ij},b_i:(0,\9)\times\rrd\to\rr,$ $1\le i,j\le d$, is the second-order parabolic differential equation for measures
\begin{align}\label{eD.1} 
\frac d{dt}\,\mu_t=\frac\pp{\pp x_i}\ \frac\pp{\pp x_j}\,(a_{ij}(t,x)\mu_t)-\frac\pp{\pp x_i}\,(b_i(t,x)\mu_t),\ (t,x)\in(0,\9)\times\rrd.
\end{align}
The equation is linear, since in contrast to \eqref{e1.3} the coefficients do not depend on the solution $\mu_t$. Usually, an initial condition $\mu_0=\nu$ is imposed for solutions to \eqref{eD.1}, where $\nu\in\mathcal{M}^+_b$.

\begin{dfn}\label{dD.1} 
  A {\it distributional solution to \eqref{eD.1} with initial condition $\nu$}  is a weakly continuous curve $(\mu_t)_{t\ge0}$ of signed locally finite Borel measures on $\rrd$ such that
\begin{align}\label{eD.2} 
\int^T_0\int_\rrd|a_{ij}(t,x)|+|b_i(t,x)|d\mu_t(x)dt<\9,\ \ff T>0, 
\end{align}	
and
$$\int_\rrd\psi d\mu_t(x)-\int_\rrd\psi d\nu=\int^t_0\int_\rrd a_{ij}(s,x)\,\frac\pp{\pp x_i}\,\frac\pp{\pp x_j}\,\psi(x)+b_i(s,x)\,\frac\pp{\pp x_i}\,\psi(x)d\mu_s(x)ds,\ \ff t\ge0,$$
for all $\psi\in C^\9_0(\rrd)$. A distributional solution  is called {\it probability solution}, if $\nu$ and each $\mu_t,$ $t\ge0$, are probability measures. Instead of the initial time $0$, one may consider an initial time $s>0$. It is obvious how to generalize the definition in this regard. In this case, the initial condition is the pair $(s,\nu)$ and the solution is defined on $[s,\9)$.
\end{dfn} 

For the nonlinear FPE \eqref{e1.3}, the notion of distributional solution is similar.

\begin{dfn}\label{dD.2}
	A {\it solution to \eqref{e1.3} with initial condition $\nu$} is a weakly continuous curve $(\mu_t)_{t\ge0}$ of signed locally finite Borel measures on $\rrd$ such that $(t,x)\mapsto a_{ij}(t,x,\mu_t)$ and $(t,x)\mapsto b_i(t,x,\mu_t)$ are product measurable on $(0,\9)\times\rrd$,
\begin{align}\label{eD.3}  
\int^T_0\int_\rrd|a_{ij}(t,x,\mu_t)|+|b_i(t,x,\mu_t)|d\mu_t(x)dt<\9,\ \ff T>0, 
\end{align}	
and
$$\int_\rrd\psi d\mu_t(x)-\int_\rrd\psi d\nu=\int^t_0\int_\rrd a_{ij}(s,x,\mu_s)\,\frac\pp{\pp x_i}\,\frac\pp{\pp x_j}\, \psi(x)+b_i(s,x,\mu_s)\,\frac\pp{\pp x_i}\,\psi(x)d\mu_s(x)ds,\ \ff t\ge0,$$
for all $\psi\in C^\9_0(\rrd)$. The notion of probability solution and the extension to initial times $s>0$ is as in  the linear case.
\end{dfn}

\noindent{\bf Linearized equations.} For the nonlinear FPEs \eqref{e1.3}, a standard way to linearize them is as follows. Fix a solution $(\mu_t)_{t\ge0}$ in the measure-coefficient of the nonlinear coefficients, i.e. consider the {\it linear} coefficients
$$\tilde a_{ij}(t,x):= a_{ij}(t,x,\mu_t),\ \ \tilde b _i(t,x):= b_i(t,x,\mu_t),$$
and consider the linear FPE with coefficients $\tilde a_{ij},\tilde b_i$. Clearly, $(\mu_t)_{t\ge0}$ is a distributional solution to this linear FPE in the sense of Definition \ref{dD.1}. In general, there can be further solutions, which are not related to the fixed solution $(\mu_t)_{t\ge0}$.

This way, we obtain the linearized $p$-Laplace equation \eqref{e5.4} by fixing in the coefficients \eqref{e1.13'} the fundamental solution $w^0$ from \eqref{e1.18} and Definition \ref{d5.7} is exactly Definition \ref{dD.1} for the linear FPE~\eqref{e5.4}.

	\paragraph{Acknowledgement.} The second and third named authors are funded by the Deutsche Forschungsgemeinschaft (DFG, German Research
	Foundation) -– Project-ID 317210226 -– SFB 1283. During the preparation of this paper the last named author spent
	very pleasant and scientifically stimulating stays at the
	University of Minnesota and the University of Madeira. He
	would like to thank Nick Krylov and Jos\'e Lu\'\i s da Silva
	respectively for their warmhearted hospitality.

\end{document}